\documentclass[a4paper]{amsart}

\usepackage{amssymb}
\usepackage{amsthm}  
\usepackage{amsmath} 
\usepackage{amscd} 
\usepackage[all]{xypic}
\usepackage{url}
\usepackage{multirow}
\usepackage[linktocpage]{hyperref}
\usepackage{color}
\usepackage{nicematrix}

\newcommand{\omi}{\omega^{-1}}
\newcommand{\om}{\omega}

\newcommand{\si}{\sigma}

\newcommand{\ba}{\mathcal{G}}

\newcommand{\fg}{\frak g}

\newcommand{\fp}{\frak p}

\newcommand{\fk}{\frak k}
\newcommand{\fr}{\frak r}

\newcommand{\fh}{\frak h}
\newcommand{\fn}{\frak n}

\newcommand{\fs}{\frak s}
\newcommand{\s}{\mathrm{\varsigma}}

\newcommand{\fc}{\frak c}

\newcommand{\Ad}{{\rm Ad}}

\newcommand{\id}{{\rm id}}

\newcommand{\na}{\nabla}
\newcommand{\U}{\Upsilon}
\newcommand{\gr}{{\rm gr}}
\newcommand{\lz}{[\![}
\newcommand{\pz}{]\!]}
\newcommand{\Rho}{{\mbox{\sf P}}}
\newcommand{\R}{\mathbb{R}}
\newcommand{\C}{\mathbb{C}}

\newcommand{\ad}{{\rm ad}}

\newcommand{\ddd}{ (cc) }

\newtheorem{prop}{Proposition}[section]
\newtheorem*{prop*}{Proposition}
\newtheorem*{thm*}{Theorem}
\newtheorem{lem}{Lemma}[section]
\newtheorem*{lemma*}{Lemma}

\newtheorem*{cor*}{Corollary}
\newtheorem{rem}{Remark}[section]
\newtheorem*{rem*}{Remark}
\theoremstyle{definition}
\newtheorem{def*}{Definition}[section]

\theoremstyle{remark}

\begin{document}
\title{First BGG operators via homogeneous examples}
\author{Jan Gregorovi\v c and Lenka Zalabov\' a}
\address{J. G.: Faculty of Science \\ University of Hradec Kr\'alov\'e \\ Rokitansk\'eho 62, Hradec Kr\'alov\'e 50003, Czech Republic\\
L.Z.: Institute of Mathematics, 
	Faculty of Science, University of South Bohemia, 
	Brani\v sovsk\' a 1760, 370 05 \v Cesk\' e Bud\v ejovice, and
    Department of Mathematics and Statistics, 
	Faculty of Science, Masaryk University,
	Kotl\' a\v rsk\' a 2, 611 37 Brno, Czech Republic}
 \email{jan.gregorovic@seznam.cz, lzalabova@gmail.com}
 \thanks{ J.G. is supported by grant no.~19-14466Y, `Special metrics in supergravity and new G-structures' from the Czech Science Foundation.
L.Z. is supported by grant no.~20-11473S, `Symmetry and invariance in analysis, geometric modelling and control theory' from the Czech Science Foundation. 
Computations were realized in the CAS system  {\sc Maple} and package {\sc DifferentialGeometry}, \cite{dg}.
}
\subjclass[2020]{53C07, 53C30, 58J70; 53A40, 53C15, 58J10, 58J60, 58A30}

\begin{abstract}
We review the theory of first BGG operators and study how to approach them and find their solution on homogeneous geometries. We provide many new examples of parabolic geometries that admit solutions of first BGG operators with many interesting properties and interpretations.
\end{abstract}
\maketitle
\tableofcontents

\section*{Introduction}

BGG operators form an important class of differential operators studied intensively over the last decades, \cite{be,CSS,CD}. They can be defined on smooth filtered manifolds $M$, possibly carrying additional compatible geometric structures like projective or conformal, that can be equivalently described as parabolic geometries, \cite{parabook}. In particular, many natural properties of parabolic geometries can be formulated in terms of BGG operators,  see e.g. \cite{metrics,CGM,hol,SW}.  

In fact, BGG operators can be viewed on two different levels. 
The first approach uses the fact 
that BGG operators act between sections of distinguished vector bundles that are parts of so--called \emph{BGG sequences}, \cite{CSS,relative}. From this viewpoint, the information is compressed into a uniform theory which on the one hand allows to provide results in an efficient way but on the other hand hides the structure and source of vector bundles and operators in question.
The second approach states that BGG operators act between distinguished vector bundles described using  so--called \emph{natural prolongation}, \cite{prol,Hammerl,begn} but they can be also defined independently for an alternative description of BGG operators. 
This has the advantage that one can immediately find their solutions by solving closed systems of first--order PDEs. This viewpoint provides a deeper insight into the structure theory of BGG operators but the discussion can be only treated case by case.

We start here with a representation $V$ of a semisimple Lie algebra $\fg$ that is at the same time a compatible representation of a parabolic subgroup $P$ of some Lie group $G$ with the Lie algebra $\fg$ (so that the pair $(G,P)$ determines a type of the parabolic geometry, \cite[Section 3.1.]{parabook}), and a linear connection  $\nabla$ acting on the space $\Omega^0(V)$ of sections of a vector bundle $\mathcal{V}$ with the standard fiber $V$. Let us note that BGG theory can be generalized to more general incomes, \cite{CSS,CD,relative}. Other ingredient is the \emph{Kostant's codifferential} $\partial^*:\Omega^i(V)\to \Omega^{i-1}(V)$ acting between the spaces $\Omega^k(V)$ of $k$--forms valued in $\mathcal{V}$, \cite[Section 3.3.]{parabook}. This allows to compute the comohomology spaces $\mathcal{H}^i(V)=Ker(\partial^*)/Im(\partial^*)$ on which are the BGG sequences defined. In particular, we require the connection $\nabla$ to satisfy the compatibility condition as follows.
\begin{enumerate}
\item[$\ddd$] \label{cc} There is an operator $Q: Ker(\partial^*)\to Ker(\partial^*)$ that is a polynomial in the variable $\partial^*\circ \nabla$ such that restrictions of both compositions $Q\circ \partial^*\circ \nabla$ and $\partial^*\circ \nabla\circ Q$ to $Im(\partial^*)$ equal to identity.
\end{enumerate}

The condition \hyperref[cc]{$\ddd$} guarantees that we can define the \emph{splitting operator}
\begin{gather*}
 \mathcal{L}_0:\mathcal{H}_0(V) \to \Omega^0(V),\  \mathcal{L}_0:=\id-Q\circ \partial^*\circ \nabla
\end{gather*}
for the projection $\pi_0:\Omega^0(V)= Ker(\partial^*)\to \mathcal{H}_0(V)$ that in addition satisfies that $\nabla\circ \mathcal{L}_0$ has values in $Ker(\partial^*)$. 
Then the \emph{first BGG operator}  
\begin{gather*}
 \mathcal{D}: \mathcal{H}^0(V)\to \mathcal{H}^1(V),\ \mathcal{D}=\pi_1\circ \nabla^\Phi\circ \mathcal{L}_0
\end{gather*}
is well--defined under the assumption \hyperref[cc]{$\ddd$}, where $\pi_1: Ker(\partial^*)\to \mathcal{H}^1(V)$ is the projection. In fact, we have the following diagram
$$
\xymatrix{
0& \Omega^0(V) \ar[l]^{\partial^*}  \ar@/^/[r]^{\nabla^\Phi} \ar@/^/[d]^{\pi_0}& \Omega^1(V) \ar@/^/[l]^{\partial^*}\ar[d]_{\pi_1}  & \dots \\
& \mathcal{H}^0(V) \ar@/^/[u]^{\mathcal{L}_0}  \ar@{.>}[r]^{\mathcal{D}}& \mathcal{H}^1(V) & \dots \\
}
$$
Let us note that one can define higher order splitting operators 
and higher order BGG operators 
under assumptions analogous to \hyperref[cc]{$\ddd$}, \cite{relative,CSS}. In this paper, we restrict our considerations to the first BGG operators.

The main problem in the theory of BGG operators is that these are overdetermined differential operators and thus, it is difficult to find parabolic geometries with non--trivial solutions of BGG operators. In fact, many papers deal with the theory of BGG operators in general or with special cases like projective or conformal but they rarely provide examples. We approach this problem by considering (locally) homogeneous parabolic geometries that at least admit (by definition) solutions for BGG operators controlling infinitesimal automorphisms, \cite{deform}. For these, we develop an algebraic theory that enables us to straightforwardly construct  the (local) solutions for arbitrary first BGG operators on homogeneous parabolic geometries. This allows us to construct many interesting examples.

To get an algebraic description of  BGG operators on (locally) homogeneous geometries, we view parabolic geometries as Cartan geometries $(\ba\to M,\om)$ of type $(G,P)$ for semisimple Lie groups $G$ with Lie algebras $(\fg,[,]_\fg)$ and their parabolic subgroups $P$ with Lie algebras $\fp$. Thus the Cartan bundle $\ba$ is a $P$--principal bundle over a smooth manifold $M$ and $\om: T\ba\to \fg$ is a Cartan connection, i.e., an absolute parallelism that is $P$--equivariant and reproduces fundamental vector fields of the $P$--action. Parabolic geometries are equivalent to underlying filtered geometric structures on $M$ depending on the type of the geometry, \cite[Section 3.1.]{parabook}. Roughly said, everything can be deduced from a $|k|$--grading $\fg=\fg_{-k}\oplus \dots \oplus \fg_{k}$ of the Lie algebra $\fg$ such that $\fp=\fg_{0}\oplus \dots \oplus \fg_{k}$. We denote by $\fg_-:= \fg_{-k}\oplus \dots \oplus \fg_{-1}$ and $\fp_+:=\fg_{1}\oplus \dots \oplus \fg_{k}$ the crucial parts of the grading. 

Let us emphasize that there can be several Lie groups $K$ of automorphisms of $(\ba\to M,\om)$ acting transitively on $M$ or at least having an open orbit on $M$. We assume that, locally, $M$ is an open subset in a homogeneous space $K/H$, where the Lie algebra $(\fk,[,]_\fk)$ of $K$ consist of (not necessarily all) infinitesimal automorphisms of the parabolic geometry on $M$.
\noindent
\\[1mm]
{\bf Outline.} In Section \ref{section2}, we introduce extensions and infinitesimal extensions as powerful tools to deal with (locally) homogeneous parabolic geometries. We discuss invariant connections on associated vector bundles over homogeneous parabolic geometries and their parallel sections and we focus on their coordinate description in detail. 
In Section \ref{priklady-alp}, we construct extensions or infinitesimal extensions for several interesting examples of (locally) homogeneous parabolic geometries. In particular, we describe all projective structures on three--dimensional Heisenberg group and C--projective geometries on a non--reductive homogeneous space. Further, we focus on a $(2,3,5)$ distribution corresponding to the control problem of a ball rolling on a hyperbolic space. We consider a modification of an example included in \cite{Wilse}. We also study CR geometry and Lagrangian contact geometry on a particular examples coming from classifications \cite{homogCR,homogLagr}. Finally, we consider submaximally symmetric generalized path geometry, \cite{KT}. Let us note that for its importance, we focus on conformal geometries in a separate forthcoming paper.

In Section \ref{bggtheory}, we study first BGG operators on (locally) homogeneous parabolic geometries. In particular, we develop a method to construct explicitly local solutions of first BGG operators on arbitrary tractor bundles purely algebraically in the language of (infinitesimal) extensions and suitable invariant connections.
Let us note that our method can be viewed as an algorithm and we also implemented several procedures in {\sc Maple} to unify our computations.
In Section \ref{bggexam} we find local solutions (normal and non--normal) of first BGG operators on examples from Section \ref{priklady-alp}. We consider the standard tractor bundle and its dual and/or conjugate if it makes sense and their second tensor product. We also consider the tractor bundles related to the existence of sub--Riemannian metrics, \cite{metrics}. We also compute the full Lie algebra of infinitesimal automorphisms and the infinitesimal holonomy of the tractor connection, \cite{hol}.

In Section \ref{solcoordcap} we discuss the local/global coordinate descriptions of solutions of first BGG operators and we also focus on interpretations of normal solutions. 
In Section \ref{exam-coord}, we present coordinate descriptions of  interesting solutions for examples from Section \ref{bggexam}. 
For the projective geometry from Example \ref{solproj}, we particularly realize the normal coordinates to witness the polynomiality of the normal solutions in normal coordinates.
In the other examples, we consider the exponential coordinates introduces in Section  \ref{exponential-coord}, where we take into account the structure of the group $K$. We focus on possible choices of the group $K$ and we also describe interesting holonomy reductions in detail.




\section{Homogeneous parabolic geometries and invariant connections on vector bundles} \label{section2}
\subsection{Homogeneous parabolic geometries}
We follow here the description of homogeneous Cartan and parabolic geometries from \cite{parabook}. Consider a $K$--invariant parabolic geometry of type $(G,P)$ on an (effective) homogeneous space $K/H$. A choice of a point $u$ in the Cartan bundle provides the following data that describe the $K$--invariant parabolic geometry:
\begin{enumerate}
\item an injective Lie group homomorphism $i: H\to P$,
\item a linear map $\alpha: \fk\to \fg$ such that $\alpha\circ \Ad(h)=\Ad(i(h)) \circ \alpha$ holds for all $h\in H$, $\alpha|_\fh=di$ and $\alpha(\fk/\fh)=\fg/\fp$.
\end{enumerate}
Such a pair $(\alpha,i)$ defines a functor from the category of Cartan geometries of type $(K,H)$ into the category of parabolic geometries of type $(G,P)$. In particular, this functor maps the flat Cartan geometry $(K\to K/H,\om_K)$ of type $(K,H)$ to the $K$--invariant parabolic geometry of type $(G,P)$ as follows
\begin{align}
(K\to K/H,\om_K)\mapsto (K\times_{i(H)}P\to K/H,\om_\alpha),
\end{align}
where the property $j^*\om_\alpha=\alpha\circ \om_K$ for the natural inclusion $j: K\to K\times_{i(H)}P$  determines uniquely the Cartan connection $\om_\alpha$, c.f. \cite[Section 1.5.15.]{parabook}. Moreover, the map $\lz k,p\pz\mapsto kup$ defines an isomorphism between 
 \begin{align} \label{homog-geo}
 (K\times_{i(H)}P\to K/H,\om_\alpha) 
 \end{align}
and the $K$--invariant parabolic geometry of type $(G,P)$ on $K/H$.
Thus we always use the description \eqref{homog-geo} for $K$--invariant parabolic geometries of type $(G,P)$.

\begin{def*}
We call the pair $(\alpha,i)$ 
an \emph{extension of $(K,H)$ to $(G,P)$}.
We say that the $K$--invariant parabolic geometry $(K\times_{i(H)}P\to K/H,\om_\alpha)$ of type $(G,P)$ is given by the extension $(\alpha,i)$ of $(K,H)$ to $(G,P)$.
\end{def*}

Clearly, there are many extensions of $(K,H)$ to $(G,P)$ giving the same $K$--invariant parabolic geometry of type $(G,P)$. The basic invariant that distinguishes non--equivalent $K$--invariant parabolic geometries of type $(G,P)$ is the \emph{curvature} that is described by $P$--equivariant function
\begin{align*}
\kappa: K\times_{i(H)}P\to \wedge^2 (\fg/\fp)^*\otimes \fg.
\end{align*}
For geometries given by extensions $(\alpha,i)$ of $(K,H)$ to $(G,P)$, the restriction of $\kappa$ to $j(K)$ is a constant function
\begin{align*}
\kappa(k)(\alpha(X)+\fp,\alpha(Y)+\fp)=[\alpha(X),\alpha(Y)]_\fg-\alpha([X,Y]_\fk)
\end{align*}
for $X,Y \in \fk$ and $k \in K$, which determines the whole curvature by $P$--equivariancy. Indeed, $[\alpha(\ ),\alpha(\ )]_\fg-\alpha([\ ,\ ]_\fk)$ is an $H$--invariant element of $\wedge^2 (\fk/\fh)^*\otimes \fg\cong \wedge^2 (\fg/\fp)^*\otimes \fg$, where the $H$--action is given by the action $\Ad\circ i$ on $\fg$. Let us emphasize that there is a subalgebra $\fp_+\subset \fg$ and $P$--equivariant isomorphism $\fp_+\cong (\fg/\fp)^*$ (induced by the Killing form of $\fg$), and thus we can restrict $\kappa$ to a constant function $K\to \wedge^2 \fp_+\otimes \fg$.

\begin{def*}
The extension $(\alpha,i)$ of $(K,H)$ to $(G,P)$ is called \emph{normal} if
\begin{align}\label{norm}
2 \sum_i [Z_i,\kappa(X,X_i)]_\fg- \sum_i\kappa([Z_i,X]_\fg,X_i)=0
\end{align}
holds for all $X\in \fg$, where the elements $X_i\in \fg$ are representatives of a basis of $\mathfrak{g}/\mathfrak{p}$ and the elements $Z_i\in \fp_+$ form the corresponding dual basis of $(\mathfrak{g}/\mathfrak{p})^*$.
\end{def*}

\subsection{Locally homogeneous geometries and local description in coordinates} \label{1.2}

The extensions  $(\alpha,i)$ of $(K,H)$ to $(G,P)$ provide a global coordinate--free description of  homogeneous parabolic geometries of type $(G,P)$. In the case of locally homogeneous geometries, i.e., parabolic geometries that are locally equivalent to a homogenous parabolic geometry given by extensions  $(\alpha,i)$ of $(K,H)$ to $(G,P)$, we can forget $i: H \to P$ and consider just the map $\alpha: \fk \to \fg$. 

\begin{def*}
We call a linear map $\alpha: \fk\to \fg$ an \emph{infinitesimal extension of $(\fk,\fh)$ to $(\fg,\fp)$} if $[\alpha(X),\alpha(Y)]_\fg-\alpha([X,Y]_\fk)=0$ for all $X\in \fk,Y\in \fh$ and $\alpha(\fk/\fh)=\fg/\fp$.
\end{def*}

Indeed, if $\alpha$ is an infinitesimal extension of $(\fk,\fh)$ to $(\fg,\fp)$, then there  are groups $H \subset K$ with Lie algebras $\fh \subset \fk$ and $i: H \to P$ such that $(\alpha,i)$ is an extension of $(K,H)$ to $(G,P)$ and the parabolic geometry is locally equivalent to the homogeneous geometry given by $(\alpha,i)$. However, for general choice of $K$ and $H$, there are topological obstructions for the existence of $i: H\to P$ and for transition maps between the local homogeneous charts to be elements of $K$.

In order to obtain local coordinates on $K/H$ around $kH$, let us fix a complement 
$\Ad_{k}^{-1}\fc$ of $\fh$ in $\fk$ and consider the \emph{exponential coordinates (of the first kind)}
 $$\exp(\fc)kH=k\exp(\Ad_{k}^{-1}\fc)H.$$ 
These provide a local section $\s_{\fc,k}: \fc\to K$ as $X \mapsto k\exp(\Ad_{k}^{-1}(X))$ and we obtain a section 
$$\s:=j\circ \s_{\fc,k}: \fc \to K\times_{i(H)} P.$$ 
The Cartan connection $\omega_\alpha$ is then uniquely determined by its pullback $\s^*\omega_\alpha$ on $\fc$ and we observe the following fact. 

\begin{lem}\label{localext}
Consider a basis $\theta_i$ of the pullback $\s_{\fc,k}^*\omega_K|_{\Ad_{k}^{-1}\fc}$ of the projection of $\omega_K$ to $\Ad_{k}^{-1}\fc$ along $\fh$ at $k$. Then 
$
\s^*\omega_\alpha=\alpha(\theta_i).
$
\end{lem}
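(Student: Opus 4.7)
My plan is to deduce the formula from the defining relation of the Cartan connection $\omega_\alpha$. By construction of the functor producing $\omega_\alpha$ from $(\alpha,i)$, the canonical inclusion $j\colon K\to K\times_{i(H)} P$ satisfies $j^*\omega_\alpha=\alpha\circ \omega_K$, as recalled around \eqref{homog-geo}. Since $\s=j\circ \s_{\fc,k}$ by definition, functoriality of pullback immediately gives
\[
\s^*\omega_\alpha \;=\; \s_{\fc,k}^*(j^*\omega_\alpha) \;=\; \s_{\fc,k}^*(\alpha\circ \omega_K) \;=\; \alpha\circ \s_{\fc,k}^*\omega_K,
\]
which is the single core computation and already identifies $\s^*\omega_\alpha$ as $\alpha$ applied to a $\fk$-valued $1$-form on $\fc$ coming from the flat model.

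The remaining step is to match $\alpha\circ \s_{\fc,k}^*\omega_K$ with $\alpha(\theta_i)$. I would fix a basis $\{X_i\}$ of $\Ad_k^{-1}\fc$ and expand the $\Ad_k^{-1}\fc$-projection of $\s_{\fc,k}^*\omega_K$ as $\sum_i X_i\otimes \theta_i$; the scalar $1$-forms $\theta_i$ on $\fc$ are then exactly those named in the statement, and applying $\alpha$ term by term yields $\sum_i \alpha(X_i)\theta_i$, which is the right-hand side in the paper's notation. The hypothesis $\alpha(\fk/\fh)=\fg/\fp$ guarantees that the vectors $\alpha(X_i)$ project to a frame of $\fg/\fp$, so the resulting expression genuinely describes $\omega_\alpha$ in the chosen exponential coordinates.

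The main subtlety I expect to address is the $\fh$-component of $\s_{\fc,k}^*\omega_K$. At $0\in \fc$ it vanishes because $d\s_{\fc,k}|_0=\Ad_k^{-1}$ lands in $\Ad_k^{-1}\fc$, but away from $0$ the Baker--Campbell--Hausdorff expansion of $X\mapsto k\exp(\Ad_k^{-1}X)$ produces iterated brackets of elements of $\Ad_k^{-1}\fc$ that can have nontrivial $\fh$-component. Under the injection $\alpha|_\fh=di$ this would add a $\fp$-valued correction to $\alpha\circ \s_{\fc,k}^*\omega_K$. I would handle this either by reading the ``basis $\theta_i$'' as bookkeeping the full pullback, so that any $\fh$-part is absorbed into additional $\alpha(H_a)\phi_a$ terms collectively denoted $\alpha(\theta_i)$, or by viewing the identity modulo $\fp$ since only the $\fg/\fp$-projection of $\omega_\alpha$ encodes the underlying geometric data on $M$. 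Under either convention the proof reduces to the one-line functoriality argument above, with the rest being identification of components in the chosen basis adapted to $\fk=\fh\oplus \Ad_k^{-1}\fc$.
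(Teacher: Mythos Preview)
Your argument is correct and follows the same route as the paper: both rest on the defining identity $j^*\omega_\alpha=\alpha\circ\omega_K$ together with $\s=j\circ\s_{\fc,k}$, so that $\s^*\omega_\alpha=\alpha\circ\s_{\fc,k}^*\omega_K$, and then read this off in the coframe $\theta_i$ determined by a basis of $\Ad_k^{-1}\fc$. The paper's proof compresses this to a two-sentence sketch (``the pullback identifies $\Ad_k^{-1}\fc$ with $T(K/H)$, a basis gives $\theta_i$, and the claim follows''), whereas you spell out the functoriality step and, in addition, flag the $\fh$-component issue that the paper's phrasing ``projection of $\omega_K$ to $\Ad_k^{-1}\fc$ along $\fh$'' quietly absorbs; your two proposed readings of that convention are exactly the ways the statement is used later in the paper.
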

\begin{proof}
The pullback allows to identify $\Ad_{k}^{-1}\fc$ with $T(K/H)$ and in particular, the choice of a basis $\Ad_{k}^{-1}\fc$ provides a basis $\theta_i$ of $T^*(K/H)$. This provides the claimed description of $\s^*\omega_\alpha.$
\end{proof}
Clearly, the basis from Lemma \ref{localext} depends on the choice of $\fc$ and $k$ and one usually does not start in the 
exponential coordinates.  
Starting in different coordinates, it suffices to find the flows of the infinitesimal automorphisms of the parabolic geometry in directions of $\fc$ starting at $kH$ to obtain the transition from the
exponential coordinates to the starting coordinates. 

\begin{rem}
In the examples, we pullback the results of the algebraic computations from  
exponential coordinates to the starting coordinates only in the projective example. For the other examples, we simply forget the starting coordinates (in the case we use them at all) and present the results only in 
exponential coordinates or exponential coordinates of other kind, see Proposition \ref{modify-exp}.
\end{rem}

\subsection{Associated vector bundles}

Let $\lambda$ be a $P$--representation on a vector space $V$. Then we denote by
$$\mathcal{V}:=K\times_{i(H)}P\times_{\lambda}V \to K/H$$
the corresponding associated vector bundle to the Cartan bundle of the homogeneous parabolic geometry \eqref{homog-geo}. 
%
%
Let us recall that the parabolic group $P$ carries a reductive Levi decomposition $P=G_0\exp(\fp_+)$, where $\fp_+$ is the nilradical of $\fp$ and $G_0$ is reductive. The choice of the reductive Levi decomposition is equivalent to the choice of the grading element $E\in \fg$. Then $\fg$ becomes a $|k|$--graded Lie algebra $\fg=\oplus_{j=-k}^k \fg_j$ with a grading given by $[E,X]=jX$ for $X\in \fg_j$. In particular, $\fg_0$ is the Lie algebra of $G_0$ and $\fp_+=\oplus_{j>0} \fg_j$.

We can decompose the $P$--representation on the vector space $V$ as 
$V=\oplus_j V_j$
according to (not necessarily integral) eigenvalues of $d\lambda(E)$. Then we can form the corresponding $P$--invariant filtration
$$V^\ell:=\oplus_{j\geq \ell} V_j.$$
Therefore, we obtain a natural filtration of $\mathcal{V}$ of the form 
\begin{align*}
\mathcal{V}^\ell:=K\times_{i(H)}P\times_{\lambda} V^\ell.
\end{align*} 
\begin{rem}
It may be sometimes useful to shift the eigenvalues of $d\lambda(E)$ such that each indecomposable sub--representation of $P$ has integral non--negative grading. 
In this article, we do not need to shift the eigenvalues and we always refer to the homogeneity with respect to $d\lambda(E)$.
\end{rem}
\noindent
Typical examples of associated vector bundles are
\begin{itemize} 
\item 
the tangent bundle $T(K/H)=K\times_{i(H)}P\times_{\Ad} \fg/\fp$ with the filtration of the form 
$T(K/H)^{-\ell}=K\times_{i(H)}P\times_{\Ad} \fg^{-\ell}/\fp$ for $\ell>0$, and
\item  the cotangent bundle $T^*(K/H)=K\times_{i(H)}P\times_{\Ad} \fp_+$ with the filtration of the form  $T^*(K/H)^\ell=K\times_{i(H)}P\times_{\Ad} \fg^\ell$ for $\ell>0$.
\end{itemize}
Another important example is 
\begin{itemize}
\item the \emph{adjoint tractor bundle} $\mathcal{A}:=K\times_{i(H)}P\times_{\Ad} \fg$ with the filtration of the form
$ \mathcal{A}^\ell=K\times_{i(H)}P\times_{\Ad} \fg^\ell$ for $-k\leq \ell \leq k$. 
\end{itemize}
 We can also build other bundles using duals and tensor products of representations and the above filtrations induce naturally filtrations on these new bundles.

The description of $\mathcal{V}$ can be further simplified to the associated vector bundle
 $$\mathcal{V}=K\times_{\lambda\circ i(H)}V \to K/H$$
 for the induced representation of $H$. Then it is equivalent to speak about sections of the associated vector bundle $K\times_{\lambda\circ i(H)}V \to K/H$ and $H$--equivariant functions $K\to V$ for the right multiplication on $K$ and the action $\lambda\circ i$ on $V$. In our notation,
$s\in \Gamma(\mathcal{V})^\ell$
means that the $H$--equivariant function $s: K\to V^\ell\subset V$ is a section of $\mathcal{V}^\ell\subset \mathcal{V}$.

\begin{def*}
We say that the extension $(\alpha,i)$ of $(K,H)$ to $(G,P)$ is \emph{regular} if
$\kappa\in \Gamma(\wedge^2T(K/H)^*\otimes \mathcal{A})^1.$
\end{def*}

Let us note that regular normal extensions  $(\alpha,i)$ of $(K,H)$ to $(G,P)$ correspond to regular normal $K$--homogeneous parabolic geometries of type $(G,P)$, and thus to corresponding underlying geometric structures on $K/H$, \cite[Section 3.1.]{parabook}. 

\subsection{Natural differentiation on associated vector bundles}\label{sec2.1}

Let us start with the adjoint tractor bundle of the flat Cartan geometry of type $(K,H)$ given as
$\mathcal{K}:=K\times_{\Ad(H)}\fk.$
We identify $t\in \Gamma(\mathcal{K})$ with projectable vector fields for the right $H$--action on $K$ using $\om_K$, i.e., with lifts of vector fields on $K/H$.
\begin{def*}
We define the \emph{fundamental derivative} $D^\fk : \Gamma(\mathcal{V})\to \Gamma(\mathcal{K}^*\otimes \mathcal{V})$ as $ D^\fk_t s=\om_K^{-1}(t).s$
for $t\in  \Gamma(\mathcal{K})$ and $s\in  \Gamma(\mathcal{V})$, where $.$ denotes the directional derivative in the direction of vector field $\om_K^{-1}(t)$ on $K$. 
\end{def*}
The fundamental derivative is well--defined due to $H$--invariance of $\om_K^{-1}(t)$.

An infinitesimal extension $\alpha: \fk\to \fg$ provides a natural inclusion
$ \alpha: \Gamma(\mathcal{K})\to \Gamma(\mathcal{A})$ and we can pullback the filtration $\mathcal{A}^\ell$ to a filtration $\mathcal{K}^\ell$ of $\mathcal{K}$. Therefore, we get
$$D^\fk_t s=-d\lambda\circ \alpha(t)(s)$$
for  $t\in \Gamma(\mathcal{K}^0)$ and $s\in  \Gamma(\mathcal{V})$. 
Then
the \emph{fundamental derivative} $D^\fg:  \Gamma(\mathcal{V})\to \Gamma(\mathcal{A}^*\otimes \mathcal{V})$
satisfies for $t\in \Gamma(\mathcal{K})$, $t_\fp\in  \Gamma(\mathcal{A}^0)$ and $s\in  \Gamma(\mathcal{V})$
$$D^\fg_{\alpha(t)+t_\fp}s=D^\fk_{t}s-d\lambda(t_\fp)(s).$$

Let us emphasize that the fundamental derivatives are $K$--invariant operators, preserve the filtration $ \Gamma(\mathcal{V})^\ell$ and have further natural properties. This is not generally true for the directional derivative on $K/H$. Indeed, the directional derivative of sections of $\mathcal{V}$ is a natural operator if and only if the homogeneous space $K/H$ is reductive, i.e., there is an $H$--invariant complement $\frak{c}$ of $\fh$ in $\fk$. In such case, the $H$--equivariant inclusion
$\alpha(\frak{c}) \subset \fg$
defines a distinguished $K$--invariant connection on $\mathcal{V}$. In general, there is the following characterization of $K$--invariant connections on $\mathcal{V}$.

\begin{lem}\label{inv-con}
There is one-to-one correspondence between $K$--invariant linear connections $\Gamma(\mathcal{V})\to \Gamma(T^*(K/H)\otimes \mathcal{V})$ and $H$--equivariant maps $\Phi: \fk\to \frak{gl}(V)$ satisfying $\Phi(Y)=d\lambda\circ \alpha(Y)$ for all $Y\in \fh$. Precisely, each $K$--invariant linear connection is given as 
$$\nabla^\Phi:=D^\fk+\Phi.$$
The $K$--invariant linear connection $\nabla^\Phi$ preserves the filtration $ \mathcal{V}^\ell$ if and only if $$\Phi\in \frak{gl}(V)^0.$$
The difference of two $K$--invariant linear connections is given by an $H$--equivariant map $\psi: \fk \to \frak{gl}(V)$ satisfying $\psi(Y)=0$ for all $Y\in \fh$. 

A parallel section $s$ of the $K$--invariant linear connection $\nabla^\Phi=D^\fk+\Phi$ is $K$--invariant if and only if $\Phi(Y)(s)=0$ for all $Y\in \fk$.
\end{lem}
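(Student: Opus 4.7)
The plan is to construct the bijection explicitly in both directions and then verify the ancillary claims. Starting from an $H$-equivariant $\Phi: \fk \to \frak{gl}(V)$ with $\Phi(Y) = d\lambda\circ\alpha(Y)$ for $Y \in \fh$, I would define $\nabla^\Phi_\xi s := D^\fk_t s + \Phi(t)(s)$ for any lift $t \in \Gamma(\mathcal{K})$ of a vector field $\xi$ on $K/H$. The crucial well-definedness check is independence of the lift: two lifts of $\xi$ differ by a section $t_0$ valued in $\fh$, and since $\alpha(\fh) = di(\fh) \subset \fp$ such $t_0$ lies in $\Gamma(\mathcal{K}^0)$, so the formula $D^\fk_{t_0} s = -d\lambda\circ\alpha(t_0)(s)$ displayed above the lemma applies and cancels exactly with $\Phi(t_0)(s) = d\lambda\circ\alpha(t_0)(s)$ by the boundary hypothesis on $\Phi|_\fh$. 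The Leibniz rule in $s$ is inherited from $D^\fk$ being a directional derivative of $V$-valued functions, and $C^\infty(K/H)$-linearity in $\xi$ follows from the tensoriality of both $D^\fk_t$ and $\Phi(t)$ in the lift $t$.

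The $K$-invariance of $\nabla^\Phi$ is automatic because the Maurer--Cartan form $\omega_K$ is left-$K$-invariant, so $D^\fk$ is $K$-equivariant, and $\Phi$ is a bundle map induced by an $H$-equivariant linear map. For the converse direction, given any $K$-invariant connection $\nabla$, I would extend it to an operator on lifts by $\bar\nabla_t s := \nabla_{\pi_*(t)} s$ for $t \in \Gamma(\mathcal{K})$, and consider the difference $\bar\nabla - D^\fk$. Both are derivations in $s$ and $C^\infty(K/H)$-linear in $t$, so their difference is tensorial in both slots, defining a section of $\mathcal{K}^* \otimes \mathrm{End}(\mathcal{V})$; its $K$-invariance corresponds, via the associated bundle construction, to an $H$-equivariant linear map $\Phi: \fk \to \frak{gl}(V)$. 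The requirement that $\bar\nabla$ descend to a genuine connection on $T(K/H)$, i.e., that it vanish on $\fh$-valued lifts, forces $\Phi(Y) = d\lambda\circ\alpha(Y)$ for $Y \in \fh$ by the same cancellation. The filtration-preserving statement then follows because $D^\fk_t$ preserves $V^\ell$-valued functions pointwise (directional derivatives respect vector subspaces), so $\nabla^\Phi$ preserves the filtration iff $\Phi$ takes values in $\frak{gl}(V)^0$; the difference statement is immediate by subtraction.

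For the final claim, a $K$-invariant section corresponds to a constant function $s \equiv v$ with $v$ an $H$-invariant vector in $V$, and for such $s$ the fundamental derivative $D^\fk_t s$ vanishes identically. Hence parallelism reduces to $\Phi(t)v = 0$ for all lifts $t$; allowing $t(k)$ to range over all of $\fk$ (for instance via the right-invariant vector fields, whose equivariant functions are $k\mapsto \Ad(k^{-1})Y$) gives $\Phi(Y)v = 0$ for $Y$ in a complement of $\fh$, while on $\fh$ the identity $\Phi(Y)v = d\lambda\circ\alpha(Y)v = 0$ is exactly the infinitesimal $H$-invariance of $v$ and is therefore free. Conversely, if $s$ is parallel and $\Phi(Y)s = 0$ for all $Y \in \fk$, then $D^\fk_t s = 0$ for every lift $t$, which by the absolute parallelism $\omega_K$ forces $s$ to be constant on $K$ and hence $K$-invariant. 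The main obstacle throughout is the coherent extraction of $\Phi$ from a $K$-invariant connection in the converse direction, where one must interpret $\bar\nabla - D^\fk$ as a $K$-invariant section of an associated bundle rather than as an ad-hoc pointwise difference; once this identification is in place, all the remaining verifications reduce to bookkeeping with the equivariant-function description of sections.
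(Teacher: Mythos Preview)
Your proof is correct and follows essentially the same approach as the paper's, which also compares an arbitrary $K$--invariant connection to $D^\fk$ to extract $\Phi$ and then reads off the remaining claims. The paper is terser and phrases the comparison via the natural inclusion $\Gamma(T^*(K/H)\otimes \mathcal{V})\hookrightarrow \Gamma(\mathcal{A}^*\otimes \mathcal{V})$ (followed by pullback along $\alpha$) rather than via explicit lifts to $\mathcal{K}$, but the content is the same.
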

\begin{proof}
Since there is a natural inclusion $\Gamma(T^*(K/H)\otimes \mathcal{V})\to \Gamma(\mathcal{A}^*\otimes \mathcal{V})$, we can extend a $K$--invariant linear connection $\nabla:\Gamma(\mathcal{V})\to \Gamma(T^*(K/H)\otimes \mathcal{V})$ to take values in $\Gamma(\mathcal{A}^*\otimes \mathcal{V})$ and use the pullback to $\mathcal{K}^*$. Comparing $D^\fk$ with $\nabla$ then gives $\Phi$ with the claimed properties using the properties of the fundamental derivative. Then the converse statement and the statement on the preserving of filtrations follows.

A function $s\in\Gamma(\mathcal{V})$ is $K$--invariant if and only if $s:K\to V$ is constant. This means $\nabla^\Phi s=0$ and $\nabla^\Phi s=D^\fk s+\Phi(s)=0+\Phi(s)=0$ holds.
\end{proof}

In general, $H$--equivariant maps $\Phi: \fk\to \frak{gl}(V)$ satisfying $\Phi(Y)=d\lambda\circ \alpha(Y)$ for all $Y\in \fh$ do not have to exist for various $V$. Such maps always exist on reductive spaces $K/H$, and corresponding $\Phi$ is defined by taking $0$ on the invariant complement $\fc$ and $d\lambda\circ \alpha$ on $\fh$.

\subsection{Forms valued in associated vector bundles and parallel sections of invariant linear connections} \label{paralel}

Consider spaces of $V$--valued forms on $K/H$ that we identify with the spaces
\begin{align*}
\Omega^k(V):=\Gamma(K\times_{(\Ad^k\otimes \lambda)\circ i(H)}\wedge^k\fp_+\otimes V).
\end{align*}
These are again spaces of sections of associated vector bundles and $\Omega^0(V)=\Gamma(\mathcal{V})$. We recall that the \emph{Kostant's codifferential}
$\partial^*: \Omega^{k+1}(V)\to \Omega^{k}(V)$  is defined pointwise via $\partial^*: \wedge^{k+1}\fp_+\otimes V\to \wedge^{k}\fp_+\otimes V$ as
 \begin{align*}
 \partial^*(X_0&\wedge \dots  \wedge X_k\otimes v)=\sum_{j} (-1)^{j+1}X_0\wedge \dots  \wedge \hat X_j\wedge \dots \wedge X_k\otimes d\lambda(X_j)(v)\\ 
 &+\sum_{j<l} (-1)^{j+l} [X_{j},X_{l}]_\fg \wedge \dots \wedge \hat X_{j}\wedge \dots\wedge \hat X_{l}\wedge \dots \wedge X_k\otimes v
 \end{align*}
and 
denote $\pi_i$ projections onto the cohomology spaces $\mathcal{H}^i(V)=Ker(\partial^*)/Im(\partial^*)$.

The $K$--invariant linear connection $\nabla^\Phi$ is in fact a map $\Omega^0(V)\to \Omega^1(V)$ and there is the corresponding \emph{covariant exterior derivative} $d^{\Phi}: \Omega^k(V)\to \Omega^{k+1}(V)$  
\begin{align*}
(d^{\Phi}\Theta)&(\xi_0,\dots,\xi_k)
=\sum_j(-1)^j\nabla^\Phi_{\xi_j}(\Theta(\xi_0,\dots,\hat \xi_j,\dots,\xi_k))\\
&+\sum_{j<l}(-1)^{j+l}\Theta([\xi_j,\xi_l],\xi_0,\dots,\hat \xi_j,\dots,\hat \xi_l,\dots,\xi_k),
\end{align*}
where $\hat \xi_j$ means that the vector field $\xi_j$ is omitted.
In particular, there is the following description the curvature of $\na^\Phi$.

\begin{lem}
The curvature of the linear connection $\nabla^\Phi$ on $\mathcal{V}$ is described by the function
\begin{align*}
R^\Phi:=d^{\Phi}\circ \nabla^\Phi: K\to \wedge^2(\fk/\fh)^*\otimes \frak{gl}(V)
\end{align*}
and for $X_0,X_1\in \fk$ takes form 
$$R^{\Phi}(X_0,X_1)=[\Phi(X_0),\Phi(X_1)]-\Phi([X_0,X_1]_\fk).$$
 Moreover, $R^\Phi$ satisfies the Bianchi identity 
$d^{\Phi}\circ R^\Phi=0.$
\end{lem}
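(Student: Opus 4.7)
The plan is to derive both statements by direct expansion after substituting $\nabla^\Phi = D^\fk + \Phi$ into the standard curvature formula coming from $R^\Phi = d^\Phi \circ \nabla^\Phi$ on $\Omega^0(V)$. For $X_0, X_1 \in \fk$ represented by the lifts $\om_K^{-1}(X_0), \om_K^{-1}(X_1)$ on $K$, the bracket formula for $d^\Phi$ gives
\[
R^\Phi(X_0, X_1)s = \nabla^\Phi_{X_0}\nabla^\Phi_{X_1}s - \nabla^\Phi_{X_1}\nabla^\Phi_{X_0}s - \nabla^\Phi_{[X_0, X_1]_\fk}s.
\]
Since each $\Phi(X_j)$ is a fixed endomorphism of $V$, constant along the $K$-fibers, one has $D^\fk_{X_i}(\Phi(X_j)s) = \Phi(X_j)D^\fk_{X_i}s$. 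Expanding the first two terms and cancelling the four mixed pieces leaves $[D^\fk_{X_0}, D^\fk_{X_1}]s + [\Phi(X_0), \Phi(X_1)]s - \nabla^\Phi_{[X_0, X_1]_\fk}s$.

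The key step is the identification $[D^\fk_{X_0}, D^\fk_{X_1}] = D^\fk_{[X_0, X_1]_\fk}$. Unwinding $D^\fk_t s = \om_K^{-1}(t).s$, the lifts $\om_K^{-1}(X_j)$ are the left-invariant vector fields on $K$, whose Lie bracket is $\om_K^{-1}([X_0, X_1]_\fk)$ by the Maurer--Cartan identity. Substituting and using $\nabla^\Phi_{[X_0, X_1]_\fk}s = D^\fk_{[X_0, X_1]_\fk}s + \Phi([X_0, X_1]_\fk)s$, the $D^\fk$-contributions cancel and the asserted formula $R^\Phi(X_0, X_1) = [\Phi(X_0), \Phi(X_1)] - \Phi([X_0, X_1]_\fk)$ remains.

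For the Bianchi identity I would invoke the standard formal argument. The connection $\nabla^\Phi$ extends to a covariant exterior derivative $d^\Phi$ on the tensor bundles built from $\mathcal{V}$ satisfying the graded Leibniz rule, and $(d^\Phi)^2 s = R^\Phi \wedge s$ on $\Omega^0(V)$. Applying $d^\Phi$ once more,
\[
R^\Phi \wedge d^\Phi s = (d^\Phi)^3 s = d^\Phi(R^\Phi \wedge s) = (d^\Phi R^\Phi) \wedge s + R^\Phi \wedge d^\Phi s,
\]
so $(d^\Phi R^\Phi) \wedge s = 0$ for every $s \in \Omega^0(V)$, which forces $d^\Phi R^\Phi = 0$.

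The main obstacle is really only the sign bookkeeping in the key step: one has to pin down the left/right trivialization implicit in the paper's choice of $\om_K$ and the resulting relationship among the bracket of induced vector fields on $K/H$, the abstract bracket $[X_0, X_1]_\fk$, and the commutator of the lifts $\om_K^{-1}(X_j)$ on $K$. Once the convention used in $D^\fk_t s := \om_K^{-1}(t).s$ is fixed consistently, the cancellation above is automatic and the remainder of the argument is purely formal.
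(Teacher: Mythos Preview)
Your argument is correct and matches the paper's approach. The paper's proof is simply a terse citation of the same identities from \cite{parabook}---$D^\fk\Phi=0$ (your constancy of $\Phi(X_j)$), the commutator identity $(D^\fk)^2(s_1,s_2)-(D^\fk)^2(s_2,s_1)-D^\fk_{[s_1,s_2]_\fk}=0$ (your Maurer--Cartan step), and the bracket compatibility $[s_1,s_2]=D^\fk_{s_1}s_2-D^\fk_{s_2}s_1+[s_1,s_2]_\fk$---and then dismisses Bianchi with ``the rest follows,'' which you make explicit via the standard $(d^\Phi)^3$ argument.
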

\begin{proof}
Identities $D^\fk\Phi=0$ and $(D^\fk)^2(s_1,s_2)- (D^\fk)^2(s_2,s_1)-D^\fk_{[s_1,s_2]_\fk}=0$ and $[s_1,s_2]=D^\fk_{s_1}s_2-D^\fk_{s_2}s_1+\alpha([s_1,s_2]_\fk)$ 
for $s_1,s_2\in \Gamma(\mathcal{K})$ from \cite{parabook} imply the second formula and the rest follows.
\end{proof}

Let us describe parallel sections of a $K$--invariant linear connection $\nabla^\Phi=D^\fk+\Phi$, i.e., sections $s\in \Gamma(\mathcal{V})$ such that $\nabla^\Phi s=0$, in detail.

\begin{def*}
Let us consider the set
$$\mathcal{S}^0:=\{v\in V: R^{\Phi}(X_0,X_1)(v)=0,\  X_0,X_1\in \fk\}.$$
Then for $k>0$ we consider sets
$$\mathcal{S}^k:=\{v\in \mathcal{S}^{k-1}: \Phi(X)(v)\in \mathcal{S}^{k-1},\ X\in \fk\}.$$
\end{def*}

The endomorphisms used in the definition of sets $\mathcal{S}^i$ are exactly the generators of the infinitesimal holonomy of $\nabla^\Phi$ which is the same at all points due to homogeneity, \cite{KoNo2}. Moreover, since $V$ is finite--dimensional, we get $\mathcal{S}^k=\mathcal{S}^{k+1}=\dots =:\mathcal{S}^\infty$ for $k$ large enough. In particular, it follows from the definition of $\mathcal{S}^0$ that $\mathcal{S}^\infty$ is a representation of $\fk$ and we denote $\mathcal{S}^\infty_K$ the maximal sub--representation of $\mathcal{S}^\infty$ that integrates to $K$. This allows to interpret elements of $\mathcal{S}^\infty$ and $\mathcal{S}^\infty_K$ as follows.

\begin{prop}[\cite{KoNo2,H}] \label{prop1.1}
For every $k\in K$, the set $\mathcal{S}^\infty$ is the set of values $s(k)$ for all $s\in \Gamma(\mathcal{V})$ satisfying $\nabla^\Phi s=0$ on some (simply connected) neighbourhood of $k\in K$.  Moreover, $\mathcal{S}^\infty_K$ consists of values of globally defined parallel sections.

The local section $s\in \Gamma(\mathcal{V})$ with value $s(k)\in \mathcal{S}^\infty$ satisfying $\nabla^\Phi s=0$ on some (simply connected) neighborhood of $k$ is given by 
\begin{align} \label{parsec}
s(k\exp(\Ad_{k}^{-1}X))=\exp(- \Phi(\Ad_{k}^{-1}X))s(k)
\end{align}
for $X$  in some neighborhood of $0$ in $\fk$. If we consider a complement $\fc$ of $\fh$ in $\fk$ providing local coordinates on $K/H$ via exponential map, then \eqref{parsec}
for all $X$ in some neighborhood of $0$ in $\fc$ gives the description of the section $s$ in local coordinates.

The trivial sub--representations of  $\mathcal{S}^\infty$ are contained in $\mathcal{S}^\infty_K$ and correspond to $K$--invariant parallel sections.
\end{prop}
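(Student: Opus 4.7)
The plan is to translate $\nabla^\Phi s=0$ into an explicit linear ODE on $K$, solve it along one-parameter subgroups, and then pass to the integrability analysis. Using Lemma \ref{inv-con}, the parallel condition on the $H$-equivariant function $s\colon K\to V$ becomes $\omega_K^{-1}(X)(s)=-\Phi(X)(s)$ for every $X\in\fk$, where $\omega_K^{-1}(X)$ is the left-invariant vector field associated to $X$. Along the flow line $t\mapsto k\exp(tX)$ this is a linear ODE with the constant operator $-\Phi(X)$ and integrates to $s(k\exp(tX))=\exp(-t\Phi(X))\,s(k)$. Reparametrising with $\Ad_k^{-1}X$ in place of $X$ yields exactly \eqref{parsec} and shows that a parallel section is uniquely determined by its value at a single point, so the task reduces to identifying the admissible initial values.

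Next I would extract the integrability condition. Differentiating the parallel equation once more, combining it with $[\omega_K^{-1}(Y),\omega_K^{-1}(X)]=\omega_K^{-1}([Y,X]_\fk)$, and using that $\Phi(X)\in\mathfrak{gl}(V)$ is point-independent, one reads off immediately that $R^\Phi(Y,X)(s)=0$ at every point. Hence parallel values must lie in $\mathcal{S}^0$. Conversely, given $v\in V$, the value $\exp(-\Phi(Y))v$ at $k\exp(Y)$ prescribed by \eqref{parsec} must itself lie in $\mathcal{S}^0$ for the candidate section to be well defined on a simply connected neighborhood; expanding $\exp(-\Phi(Y))$ as a power series in $Y$, this is equivalent to $\Phi(Y_1)\cdots\Phi(Y_j)v\in\mathcal{S}^0$ for all $Y_i\in\fk$ and all $j\ge 0$, which is exactly $v\in\mathcal{S}^\infty$. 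Since $V$ is finite-dimensional the filtration stabilises, realising the classical infinitesimal holonomy argument from \cite{KoNo2,H} in the present homogeneous setting.

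For globalisation, the formula \eqref{parsec} along $\fh$-directions automatically respects $H$-equivariance because $\Phi|_\fh=d\lambda\circ\alpha$ by Lemma \ref{inv-con}, so the formula indeed produces a genuine local section of $\mathcal{V}$ on simply connected neighborhoods. The remaining obstruction to extending such a local section to all of $K/H$ is that the induced infinitesimal $\fk$-representation on $\mathcal{S}^\infty$ may fail to exponentiate to a $K$-action compatible with the $H$-equivariance; by construction $\mathcal{S}^\infty_K$ is the maximal subrepresentation where integration succeeds, which proves the second half of the statement.

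The last claim about trivial subrepresentations follows at once from the last part of Lemma \ref{inv-con}: a subrepresentation $U\subset\mathcal{S}^\infty$ is trivial if and only if $\Phi(X)(v)=0$ for all $X\in\fk$ and $v\in U$, which is exactly the characterisation of $K$-invariant parallel sections given there. I expect the main subtle point to be the passage $\mathcal{S}^0\leadsto\mathcal{S}^\infty$, namely the claim that power-series-level consistency of \eqref{parsec} faithfully captures the full integrability obstruction; this is precisely what the finite-dimensionality of $V$ together with the cited infinitesimal holonomy theorem guarantees.
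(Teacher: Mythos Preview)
The paper does not give its own proof of Proposition~\ref{prop1.1}; it merely records the statement with citations to \cite{KoNo2,H} and the remark immediately preceding it that ``the endomorphisms used in the definition of sets $\mathcal{S}^i$ are exactly the generators of the infinitesimal holonomy of $\nabla^\Phi$, which is the same at all points due to homogeneity.'' So there is no paper proof to compare against; your task was really to supply the argument the paper outsources.

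Your sketch does this correctly and along the expected lines. The reduction of $\nabla^\Phi s=0$ to the constant--coefficient linear ODE along left--invariant flows, its solution giving \eqref{parsec}, and the identification of $\mathcal{S}^\infty$ with the annihilator of the infinitesimal holonomy algebra are all sound. One wording issue: when you write that $\exp(-\Phi(Y))v$ ``must lie in $\mathcal{S}^0$ for the candidate section to be well defined,'' you mean \emph{for the candidate section to be parallel}; the function is well defined regardless, and the role of $\mathcal{S}^0$ is that it is where the curvature obstruction to parallelism vanishes. You correctly flag that the genuine content is the sufficiency direction (existence of a local parallel section for every $v\in\mathcal{S}^\infty$), which is exactly the Ambrose--Singer/infinitesimal holonomy theorem from \cite{KoNo2}; in the homogeneous setting this simplifies because parallel transport conjugates the (point-independent) curvature operators by $\exp(-\Phi(Y))$, so the holonomy algebra is generated by the $R^\Phi(X,Y)$ together with iterated brackets against the $\Phi(Z)$, matching the filtration $\mathcal{S}^k$. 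Your use of Lemma~\ref{inv-con} for the final claim on trivial subrepresentations is exactly what the paper intends.
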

In general, the exponential map does not have to cover the whole $K$ and one needs to consider several local descriptions for different elements $k$ (or exponential coordinates of different kind) to describe a global section.

\section{Examples of homogeneous parabolic geometries and extension functors}
\label{priklady-alp}
Homogeneous and locally homogeneous parabolic geometries play an important role in various classification results. Simplest examples are Lie groups carrying additional invariant geometric structures, \cite{KoNo}. It also turns out that submaximal models of parabolic geometries are locally homogeneous. Construction of local models via deformations of suitable algebras and classification in the complex setting is given in \cite{KT}. In the real setting, one has to proceed case by case, see e.g. \cite{KWZ,subCR,subCproj}. There are also known classification of locally homogeneous parabolic geometries of several types in special dimensions. For example, in \cite{homogCR,homogLagr} the authors study special classes of Lagrangian contact structures and CR--structures in dimension $5$ by means of Cartan's reduction method and Petrov--like methods. In \cite{Wilse} the  author studies real $(2,3,5)$ distributions in dimension $5$ and also distinguishes those that correspond to rolling bodies.

We demonstrate here on examples how to describe (locally) homogeneous parabolic geometries in a uniform way using (infinitesimal) extensions. The construction is done in several steps as follows.

\begin{enumerate}
\item Find a Lie algebra $\fk$ of infinitesimal automorphisms that acts transitively on the (open subset of the) homogeneous space. 
We find directly suitable $\fk$ in Examples \ref{proj-alp}, \ref{cproj-alp}, we modify $\fk$ from known classification in Example \ref{235-alp} and we use the known classifications of such $\fk$ in Examples \ref{CR-alp}, \ref{path-alp}.
\item Find 
\begin{itemize}
\item an associated grading $\gr(\fk)$ of $\fk$, or 
\item $H$--invariant filtration of $\fk^i$ such that $\gr(\fk)_-=\fg_-$. 
\end{itemize}
These provide $\alpha|_\fh$ or even $i: H\to P$ in the case that the additional structures on $\fg_{-1}$ defining the parabolic geometry are preserved by $H$ as in Examples \ref{CR-alp},  \ref{path-alp}.
\item If $\gr(\fk)$ determines an infinitesimal extension $\gr(\alpha)$ of  $(\fk,\fh)$ to $(\fg,\fp)$, as in all the examples except Example \ref{cproj-alp}, it remains find the unique (up to isomorphism) normal extension $\alpha$ as the sum of $\gr(\alpha)$ and the uniquely determined $H$--equivariant map $\gr(\fk)_-\to \fg$ such that $\partial^*\kappa_\alpha=0$ holds for the curvature.
\item We show in Example \ref{cproj-alp}, how to proceed in the case that  $\gr(\fk)$ determines only infinitesimal extension $\gr(\alpha)$ of  $(\gr(\fk),\gr(\fh))$ to $(\fg,\fp)$ for the associated graded Lie algebras.
\end{enumerate}


\subsection{Extension in projective geometry}\label{proj-alp}
Regular normal parabolic geometries of type $(PGL(n+1,\R),P_1)$ correspond to projective geometries $(M,[\na])$, where $[\nabla]$ is a class of torsion--free linear connections sharing the same geodesics as unparametrized curves on $M$, \cite[Section 4.1.5.]{parabook}. We write elements of 
$\fg=\frak{sl}(n+1,\R)$ as $(1,n)$--block matrices with a $|1|$--grading as follows $$\left[ \begin{smallmatrix} a & Z  \\ X & A \end{smallmatrix} \right],$$
where $A \in \frak{gl}(n,\R)$ and $a \in \R$ such that $a+tr(A)=0$ form $\fg_0$, $X \in \R^{n} \simeq \fg_{-1}$ and $Z \in \R^{n*} \simeq \fg_{1}$. Then $\fp_1$ corresponds to block upper triangular matrices.

We describe here all invariant projective geometries on the Lie group $H_{12}$ of unimodular lower triangular $3\times 3$ matrices, i.e., homogeneous space $K/H$ with $K=H_{12}$ and  $H=\{\id\}$. Thus, $\fc=\fk$ provides a unique choice of the complement and then the local section $\s_{\fc,k}: \fc\to K$ is just a choice of exponential coordinates on $K$. However, the usual parametrization of matrices provides different starting coordinates  $(y_1,y_2,y_3)$ on $H_{12}$, where left--invariant vector fields and the Maurer--Cartan form of $H_{12}$ define the following frame and coframe on $H_{12}$
\[X_1:= \partial_{y_1}+y_2\partial_{y_3},\ \ X_2:=\partial_{y_2},\ \ X_3:=\partial_{y_3},\] 
\[\theta_1:=dy_1,\ \ \theta_2:=dy_2,\ \ \theta_3:=-y_2dy_1+dy_3,\]
where we denote  $\partial_y := \frac{\partial}{\partial_{y}}$. A Cartan bundle of a parabolic geometry  of type $(PGL(n+1,\R),P_1)$ on $H_{12}$ always admits trivialization $H_{12}\times P_1$ such that, if we denote by $\tau$ the section $\tau: k\mapsto (k,e)$, $k\in H_{12}$, then the pullback $\tau^*\omega$ of an arbitrary Cartan connection $\omega$ of type $(PGL(n+1,\R),P_1)$ is given by the $(1,n)$--block matrix of one--forms as 
\begin{align} \label{pul}
\tau^*\omega=\left[ \begin{smallmatrix} a^k\theta_k & \Rho^k_j\theta_k \\ \theta_i & A^{jk}_i\theta_k  \end{smallmatrix} \right].
\end{align}
The components $a^k,A^{jk}_i$ determine the connection form of a torsion--free linear connection in the projective class $[\nabla]$, and the component $\Rho^k_j$ is (up to the sign convention) the projective $\Rho$--tensor of such linear connection. This is the usual description of Cartan geometries via adapted frames, \cite{S}, and the following Proposition that is a consequence of Lemma \ref{localext} provides a comparison 
with the description using extensions. 

\begin{prop}\label{proj-CC}
A Cartan connection $\omega$  on $H_{12}\times P_1$ with the pullback $\tau^*\omega$ given by \eqref{pul} is an invariant Cartan connection if and only if $a^k, A^{jk}_i,\Rho^k_j$ are constant. In such case, $$\alpha(x_1X_1+x_2X_2+x_3X_3):=\left[ \begin{smallmatrix} a^kx_k& \Rho^k_jx_k \\ x_i & A^{jk}_ix_k  \end{smallmatrix} \right]$$ defines a normal extension of $(H_{12},\{\id\})$ to $(PGL(n+1,\R),P_1)$.
\end{prop}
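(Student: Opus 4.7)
My plan is to deduce both halves of the proposition directly from Lemma \ref{localext}, specialised to the setting $K=H_{12}$, $H=\{\id\}$, $\fc=\fk=\frak{h}_{12}$, and $k=e$. The key is that with $\fh=0$, the axioms of an extension become almost trivial, and Lemma \ref{localext} identifies the matrix data of \eqref{pul} with the values $\alpha(X_k)$ of the extension on the Maurer--Cartan frame.

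For the ``if and only if'' part, the $K$-action on $H_{12}\times P_1$ by left translation on the first factor intertwines with $\tau$, so $\omega$ is $K$-invariant precisely when $\tau^*\omega$ is a left-invariant $\fg$-valued $1$-form on $H_{12}$. Since $\theta_1,\theta_2,\theta_3$ is the Maurer--Cartan coframe dual to the left-invariant frame $X_1,X_2,X_3$, every left-invariant $1$-form on $H_{12}$ is a constant-coefficient $\R$-linear combination of the $\theta_k$. Applied entrywise to the matrix \eqref{pul}, this gives that $\omega$ is invariant iff $a^k,A^{jk}_i,\Rho^k_j$ are all constant.

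For the extension assertion, I take $i: \{\id\}\to P_1$ to be trivial; the defining properties of an extension then reduce to linearity of $\alpha$ (immediate from the matrix formula), vacuous conditions on $\fh=0$, and the requirement $\alpha(\fk/\fh)=\fg/\fp$. The last holds because the $\fg_{-1}$-block of $\alpha(x_1X_1+x_2X_2+x_3X_3)$ is the column vector $(x_1,x_2,x_3)^T$, which is a linear isomorphism $\fk\to \fg_{-1}\cong\fg/\fp$. Applying Lemma \ref{localext} to the section $\s=\tau\circ\s_{\fk,e}$ yields $\s^*\omega_\alpha=\alpha(X_k)\theta_k$, and writing this as a matrix of $1$-forms reproduces exactly \eqref{pul} with the prescribed constants, so $\omega_\alpha=\omega$.

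The main obstacle is checking normality. As the paragraph preceding the proposition already indicates, the components $\Rho^k_j$ are (up to sign) the projective Schouten tensor of the linear connection encoded by $a^k, A^{jk}_i$, and this is the unique choice for which the resulting Cartan connection is normal in the standard projective sense. Since $\kappa_\alpha(X,Y)=[\alpha(X),\alpha(Y)]_\fg-\alpha([X,Y]_{\frak{h}_{12}})$ is a constant function on $K$, condition \eqref{norm} becomes a finite-dimensional linear algebraic identity inside $\frak{sl}(n+1,\R)$; verifying it is a routine but unavoidable block-matrix calculation in the $|1|$-grading, and this is the only non-formal step of the argument.
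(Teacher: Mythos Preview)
Your approach via Lemma \ref{localext} is exactly what the paper does: the sentence immediately preceding the proposition says it ``is a consequence of Lemma \ref{localext}'' and no further proof is given. Your arguments for the invariance equivalence and for $(\alpha,i)$ being an extension with $\omega_\alpha=\omega$ are correct and complete.

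The one genuine gap is in your last paragraph on normality. You describe \eqref{norm} as a ``finite-dimensional linear algebraic identity'' to be ``verified'' by an ``unavoidable block-matrix calculation'', but for \emph{arbitrary} constants $a^k, A^{jk}_i, \Rho^k_j$ the extension is simply not normal: \eqref{norm} is a nontrivial condition on those constants, not an identity they all satisfy. The correct argument is already implicit in what you have shown. Since $\omega_\alpha=\omega$, their curvatures agree, so $\partial^*\kappa_\alpha=0$ holds precisely when $\partial^*\kappa_\omega=0$; normality of the extension is thus \emph{equivalent} to normality of the Cartan connection $\omega$. In the context of the section (invariant projective geometries, i.e.\ regular normal parabolic geometries of this type), $\omega$ is implicitly normal --- this is exactly the content of the sentence you quote about $\Rho^k_j$ being the projective Rho-tensor --- and normality of $\alpha$ follows tautologically with no calculation at all. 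This reading is confirmed by the paragraph after the proposition, where the paper \emph{imposes} \eqref{norm} and derives from it the torsion-freeness relations among the $A^{jk}_i$ and the explicit formulas for $\Rho^k_j$: were normality automatic for all constants, there would be nothing to impose. So replace your final paragraph with the one-line observation that normality transfers from $\omega$ to $\alpha$ via $\omega_\alpha=\omega$.
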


Let us emphasize that $\gr(\alpha)(x_1X_1+x_2X_2+x_3X_3)=\left[ \begin{smallmatrix} 0&0 \\ x_i & 0 \end{smallmatrix} \right]$ and the rest of $\alpha$ is a map $\gr(\fk)_{-1}\to \fp_1$ that is trivially $H$--equivariant. This map is not determined uniquely by the  normalization condition \eqref{norm}, because there is a freedom given by the action of $\exp(Z)$ for $Z\in \fg_1$, i.e., change of the section $\tau$ as $k \to (k,\exp Z)$. To get a unique extension describing the Cartan connection, we can assume that $a=0$ which completely removes this freedom and guarantees exactness of the connection in the projective class. Under these conditions, the following matrix of one--forms describes all invariant projective geometries on $H_{12}$
\[
 \left[ \begin {smallmatrix} 0&\Rho^k_1\theta_k&\Rho^k_2\theta_k&
  \Rho^k_3\theta_k\\   
\theta_1&a_{1}\theta_1+\theta_2a_{2}+a_{3}\theta_3&a_{2}\theta_1+\theta_2a_{4}+a_{5}\theta_3&a_{3}\theta_1+a_{5}\theta_2+a_{6}\theta_3\\ 
\theta_2&a_{7}\theta_1- ( a_{1}+a_{8}) \theta_2+\theta_3a_{9}& \theta_3a_{11}- ( a_{2}+a_{10} ) \theta_2- ( a_{1}+a_{8}) \theta_1&a_{9}\theta_1+\theta_2a_{11}+a_{12}\theta_3\\ 
\theta_3&a_{13}\theta_1+\theta_2a_{14}+\theta_3a_{8}&\theta_2a_{15}+\theta_3a_{10}+ ( a_{14}-1 ) \theta_1& \theta_2a_{10}+a_{8}\theta_1-( a_{3}+a_{11} ) \theta_3\end{smallmatrix} \right],
\]
where the normalization condition \eqref{norm} determines the relations between elements $A^{jk}_i$ (torsion--freeness) and completely determines $\Rho$ as
\begin{align*}
\Rho^1_1&=  a_{13}a_{3}+a_{7}a_{2}+a_{14}a_{9}+a_{1}^2+a_{1}a_8+a_{8}^2, \\
\Rho^2_1&={\scriptstyle \frac12} ( a_{13}a_{5}+a_{7}a_{4}+a_{14}a_{11}+ a_{14}a_{3}+a_{15}a_{9}+a_{1}a_{2}+a_{1}a_{10}+2a_{8}a_{10}-a_{3} ),\\
\Rho^3_1&={\scriptstyle \frac12} (a_{13}a_{6}+a_{7}a_{5}+a_{2}a_{9}+a_{14}a_{12}-a_{1}a_{11}+a_{1}a_{3}-2a_{8}a_{11}+a_{9}a_{10} ),\\
\Rho^1_2&= {\scriptstyle \frac12} (a_{13}a_{5}+a_{7}a_{4}+a_{14}a_{11}+a_{14}a_{3}+a_{15}a_{9}+a_{1}a_{2}+a_{1}a_{10}+2a_{8}a_{10}-a_{3} ),  \\
\Rho^2_2&=  -a_{4} a_{1}-a_{4}a_8+a_{14}a_{5}+a_{15}a_{11}+a_{2}^2+a_{2}a_{10}+a_{10}^2-a_{5},\\
\Rho^3_2&= {\scriptstyle \frac12}(a_{4}a_{9}+a_{14}a_{6}+a_{15}a_{12}-a_{1}a_{5}-a_{2}a_{11}+a_{2}a_{3}-a_{11}a_{10}-a_{10}a_{3}-a_{6} ),\\
\Rho^1_3&= {\scriptstyle \frac12} (a_{13}a_{6}+a_{7}a_{5}+a_{2}a_{9}+a_{14}a_{12}-a_{1}a_{11}+a_{1}a_{3}-2a_{8}a_{11}+a_{9}a_{10} ), \\
\Rho^2_3&={\scriptstyle \frac12} (a_{4}a_{9}+a_{14}a_{6}+a_{15}a_{12}- a_{1}a_{5}-a_{2}a_{11}+a_{2}a_{3}-a_{11}a_{10}-a_{10}a_{3}-a_{6} ),\\
\Rho^3_3&= a_{8}a_{6}+ a_{9}a_{5}+a_{11}^2+a_{11}a_{3}+a_{12} a_{10}+a_{3}^2 .
\end{align*}

For further computations, we fix a particular connection on $H_{12}$ for which by its shape we can expect many solutions of the BGG operators. We consider the connection given by $\tau^*\omega$ that has the curvature $\tau^*\kappa$ as follows
\[
\tau^*\omega= \left[ \begin{smallmatrix} 0&\theta_1&0&0\\ 
\theta_1&0&0&0\\ 
\theta_2&- \theta_2& - \theta_1&0\\ 
\theta_3&\theta_3& -\theta_1& \theta_1 \end{smallmatrix} \right],
\ \ \ \ 
\tau^*\kappa= \left[ \begin{smallmatrix} 0&0&0&0\\  
0&0&0&0\\ 
0&0&0&0\\ 
0&4\theta_1\wedge \theta_2&0&0 \end{smallmatrix} \right],
\]
where $\theta_1\wedge \theta_2(X,Y)=\frac12(\theta_1(X)\theta_2(Y)-\theta_1(Y)\theta_2(X))$.

\subsection{Extension in C-projective geometry}\label{cproj-alp}

Regular normal parabolic geometries of type $(PGL(n+1,\C),P_1)$ correspond to C-projective geometries $(M,[\na])$, where $[\nabla]$ is a class of minimal almost complex connections sharing the same C-planar curves as unparametrized complex curves on $M$, \cite{CEMN}. We write elements of 
$\fg=\frak{sl}(n+1,\C)$ as $(1,n)$--block matrices with a $|1|$--grading as follows $$\left[ \begin{smallmatrix} a & Z  \\ X & A \end{smallmatrix} \right],$$
where $A \in \frak{gl}(n,\C)$ and $a \in \C$ such that $a+tr(A)=0$ form $\fg_0$, $X \in \C^{n} \simeq \fg_{-1}$ and $Z \in \C^{n*} \simeq \fg_{1}$. Then $\fp_1$ corresponds to block upper triangular matrices.

We describe here all invariant projective geometries on a non--reductive homogeneous space $K/H=(SL(2,\R)\rtimes \R^2)/(H_1\times \R)$, where $SL(2,\R)$ acts on $\R^2$ by the standard representation, $H_1$ is the Lie group of unimodular lower triangular matrices in $SL(2,\R)$ and $\R$ is the kernel of the action of $H_1$ on $\R^2$. Further, we describe all invariant C-projective geometries on its complexification $K(\C)/H(\C)=(SL(2,\C)\rtimes \C^2)/(H_1(\C)\times \C)$. We start with the basis $e_1,\dots,e_5$ of $\fk$ over $\R$ and simultaneously of $\fk(\C)$ over $\C$ such that
\begin{gather*}
[e_1,e_3]=e_3,[e_1,e_4]=-e_4,[e_2,e_5]=-e_2,[e_3,e_4]=-e_1,\\
[e_3,e_5]=-\frac12 e_3,[e_4,e_5]=\frac12 e_4.
\end{gather*}
The Lie algebras of $H$ and $H(\C)$ are then spanned by $e_4,e_5$.

Since the homogeneous space is not reductive, the construction of the extension is more complicated than in the previous example and we approach it completely algebraically. Firstly, we consider the associated graded algebras $\gr(\fk)$ and $\gr(\fh)$, where we define 
\begin{align*}
\gr(\fk)_{-1}&=\langle x_1e_1+x_2e_2+x_3e_3 \rangle,\\
\gr(\fk)_0&=\gr(\fh)=\langle x_4e_4+x_5e_5 \rangle
\end{align*}
for $x_1,\dots,x_5$ in $\R$ or $\C$, respectively. Thus
\begin{gather*}
[e_1,e_4]={\bf 0}
\end{gather*}
becomes the difference between brackets on $\fk$ and $\gr(\fk)$.

We identify $\gr(\fk)_{-1}=\fg_{-1}$ and find elements of $\fg_0$ acting as the elements $e_4,e_5\in \gr(\fh)$ and we obtain the graded infinitesimal extension
\[
\gr(\alpha)(x_1,\dots,x_5)=\left[ \begin{smallmatrix} -\frac38 x_5&0&0&0\\  
x_1&-\frac38 x_5&0&x_4\\ 
x_2&0&\frac58 x_5&0\\ 
x_3&0&0&\frac18 x_5\end{smallmatrix} \right]
\]
of  $(\gr(\fk),\gr(\fh))$ to $(\frak{sl}(4,\R),\fp_{1})$ and of $(\gr(\fk)(\C),\gr(\fh)(\C))$ to $(\frak{sl} (4,\C),\fp_{1})$.

This clearly induces Lie algebra homomorphimsm $i: H\to P_1$ on $\gr(\fh)$. However, $\gr(\alpha)$ is not an infinitesimal extension of $\fk$, because the condition $\gr(\alpha)\circ \Ad(h)=\Ad(i(h)) \circ \gr(\alpha)$ is not satisfied for all $h\in H$. We need to add to $\gr(\alpha)$ a map $\gr(\fk)_{-1}\to \fp_1$ of a similar shape as in the projective example to obtain $\alpha$.
Again, we require $a=0$ which we achieve by considering the action of $\exp(Z)$ for $Z\in \fg_1$. 
Then the normalization conditions \eqref{norm} together with the conditions $\alpha\circ \Ad(h)=\Ad(i(h)) \circ \alpha$ for all $h\in H$ provide non--trivial equations we need to solve to obtain $\alpha$. 

The computation shows that there is the (real or complex) one--parameter family $(\alpha_s,i)$  of extensions of $(K,H)$ to $(PGL(4,\R),P_{1})$ and extensions  of $(K(\C),H(\C))$ to $(PGL(4,\C),P_{1})$, respectively

\[
\alpha_s(x_1,\dots,x_5)=\gr(\alpha)(x_1,\dots,x_5)+\left[ \begin{smallmatrix} 0&0&0&0\\  
0&-\frac12 x_1&0&0\\ 
0&0&0&sx_3\\ 
0&-\frac12 x_3&0&\frac12 x_1\end{smallmatrix} \right]
\]
that have the curvature
\[
\kappa_s(x_1,\dots,x_5,y_1,\dots,y_5)= \left[ \begin{smallmatrix} 0&0&0&0\\  
0&0&0&0\\ 
0&0&0&\frac32s(x_3y_1-x_1y_3)\\ 
0&0&0&0 \end{smallmatrix} \right].
\]
However, there is an automorphism 
 $\sigma_s(x_1,x_2,x_3,x_4,x_5)=(x_1,x_2,sx_3,\frac{x_4}{s},x_5)$ of $\fk$  such that $\Ad_p^{-1}\circ \alpha_1\circ \sigma_s=\alpha_s$ where $p$ is a diagonal matrix $diag(\frac{1}{\sqrt[4]{s}},\frac{1}{\sqrt[4]{s}},\frac{1}{\sqrt[4]{s}},\sqrt[4]{s^3})$. This implies that all $(\alpha_s,i)$ for $s\neq 0$ define the same (up to isomorphism) C-projective structure. So we work with $s=1$ in the following sections.
 
 \subsection{Extension in $(2,3,5)$ distributions} \label{235-alp}

Regular normal parabolic geometries of type $(G_2,P_{1})$ correspond to $(2,3,5)$ distributions, i.e., maximally non--integrable $2$--dimensional distributions on $5$--dimensional manifolds, \cite[Section 4.3.2.]{parabook}. We write elements of the exceptional (split) Lie algebra
$\fg=\fg_{2}(2)$ as $(1,2,1,2,1)$--block matrices with a $|3|$--grading as follows $$\left[ \begin{smallmatrix}
 a & Z_1&  z_2 & Z_3& 0  \\
X_1  & A& -JZ_1^t& -\frac{z_2}{2}IJ & IZ_3^t \\
y_2 & X_1^tJ & 0  & Z_1IJ & z_2\\
X_3&  -\frac{y_2}{2}IJ& IJX_1^t& -IA^tI & IZ_1^t\\
0& X_3^tI& y_2& X_1^tI & -a 
  \end{smallmatrix} \right], \ \ I=\left[ \begin{smallmatrix}
0& -1\\
-1& 0
  \end{smallmatrix} \right],\ \ J=\left[ \begin{smallmatrix}
0& \sqrt{2}\\
-\sqrt{2}& 0
  \end{smallmatrix} \right],$$
 where $A \in \frak{gl}(2,\R)$ and $a \in \R$ such that $a+tr(A)=0$ form $\fg_0$, $X_i \in \R^{2} \simeq \fg_{-i}$, $Z_i \in \R^{2*} \simeq \fg_{i}$, $y_2 \in \R\simeq \fg_{-2}$, $z_2 \in \R \simeq \fg_2$. Then $\fp_{1}$ corresponds to block upper triangular matrices. 
  
In this example, we consider $K/H$ for $K=SL(2,\R)\times SO(3)$ and $H=SO(2)$ that is known to carry one--parameter family of $K$--invariant $(2,3,5)$ distributions for a particular embedding of $H$ into $K$, \cite{Wilse}. However, we choose a simpler embedding than the one in \cite{Wilse}. We consider  the stabilizer $SO(2)\times SO(2)$ of the origin in the product of the hyperbolic space $SL(2,\R)/SO(2)$ and the sphere $SO(3)/SO(2)$ and embed $H$ as the diagonal $diag(SO(2))$ into it.

\begin{rem}
This space can be viewed as the configuration space for the control problem of a ball rolling on a hyperbolic space. This is a control problem with the growth vector $(2,3,5)$ and the parameter describes ratio between scalar curvatures of the two rolling spaces. We choose the parameter such that the corresponding BGG equations shall have solutions.
\end{rem}
Let $(H,X,Y)$ be an $\frak{sl}(2,\mathbb{R})$--triple, i.e., $[H,X]=-2X,[H,Y]=2Y,[X,Y]=-H$, and $(A,B,C)$ a basis of $\frak{so}(3)$ such that $[A,B]=C,[B,C]=A,[C,A]=B$. Then we consider
\begin{align*}
\fh=\fk^{0}&=\langle{\scriptstyle \frac12}(X-Y)+C \rangle\\
\fk^{-1}/\fk^0&=\langle H-2A, X+Y-2B \rangle.
\end{align*}
It is easy to check that $\fk^{-1}/\fk^0$ is $H$--invariant and thus defines a $2$--dimensional distribution on $K/H=(SL(2,\R)\times SO(3))/diag(SO(2))$. We compute 
\begin{align*}
[H-2A, X+Y-2B]&= 2\sqrt{2}({\scriptstyle -\frac{1}{\sqrt{2}}}X+ {\scriptstyle \frac{1}{\sqrt{2}}}Y+{\scriptstyle \frac{2}{\sqrt{2}}}C),\\
[{\scriptstyle -\frac{1}{\sqrt{2}}}X+{\scriptstyle \frac{1}{\sqrt{2}}}Y+{\scriptstyle \frac{2}{\sqrt{2}}}C, H-2A]&={\scriptstyle -\frac{3\sqrt{2}}{2}}({\scriptstyle \frac23}X+{\scriptstyle \frac23}Y+{\scriptstyle \frac43} B),\\
[{\scriptstyle -\frac{1}{\sqrt{2}}}X+{\scriptstyle \frac{1}{\sqrt{2}}}Y+{\scriptstyle \frac{2}{\sqrt{2}}}C, X+Y-2B]&={\scriptstyle \frac{3\sqrt{2}}{2}}({\scriptstyle \frac23}H+ {\scriptstyle \frac43} A)
\end{align*}
Thus we get a $(2,3,5)$ distribution and the given elements of the Lie algebra satisfy the same bracket relations as $\fg_-$ in our matrix representation.
Let us write
$$
\fk_{-3}\oplus\fk_{-2}\oplus\fk_{-1}\oplus\fk_{0}:=\langle x_1e_1+x_2e_2\rangle \oplus \langle x_3e_3\rangle \oplus \langle x_4e_4+x_5e_5\rangle \oplus \langle x_6e_6\rangle,
$$
where
\begin{gather*}
e_1:={\scriptstyle \frac23}X+{\scriptstyle \frac23}Y+{\scriptstyle \frac43} B,\ \ \ 
e_2:={\scriptstyle \frac23}H+{\scriptstyle \frac43} A,\\
e_3:={\scriptstyle -\frac{1}{\sqrt{2}}}X+{\scriptstyle \frac{1}{\sqrt{2}}}Y+{\scriptstyle \frac{2}{\sqrt{2}}}C,\\
e_4:= H-2A,\ \ \
e_5:=X+Y-2B,\\ 
e_6:= {\scriptstyle \frac12}(X-Y)+C.
\end{gather*}
We obtain the expression for $\gr(\alpha):\fk\to \fg$ as follows
\begin{align} \label{ex_nenorm5}
\gr(\alpha)(x_1,\dots,x_6)=  \left[ \begin {smallmatrix} 
0&0&0&0&0&0&0\\  
x_4&0&- x_6&0&0&0&0\\ 
 x_5& x_6&0&0&0&0&0\\  
x_3&-\sqrt {2} x_5&\sqrt {2} x_4&0&0&0&0\\
  x_1&-\frac{\sqrt {2}}{2}  x_3 &0&\sqrt {2} x_4&0& x_6&0\\
  x_2&0&\frac{\sqrt {2}}{2}  x_3 &-\sqrt {2}x_5&- x_6&0&0\\
 0&- x_2&-x_1& x_3&- x_5&- x_4&0\end {smallmatrix} \right]
\end{align}
by finding element of $\fg_0$ acting as $e_6$ on the associated grading. This is a regular extension $(\gr(\alpha),i)$ of $(K,H)$ to $(G_2,P_{1})$ for the natural inclusion $i: H=SO(2)\subset GL(2,\R)=G_0\subset P_1$ due to $H$--invariance of the chosen grading.
For the chosen basis of $\fk$, the (non--zero) Lie brackets are as follows
\begin{gather*}
[e_1, e_2] ={\bf -\frac{8\sqrt{2}}{9}e_3}, [e_1, e_3] ={\bf -\frac{2\sqrt{2}}{3}e_4}, [e_1, e_4] ={\bf \frac{8}{3}e_6}, [e_1, e_6] = e_2,\\
 [e_2, e_3] = {\bf \frac{2\sqrt{2}}{3}e_5}, [e_2, e_5] ={\bf -\frac{8}{3}e_6}, [e_2, e_6] = -e_1, [e_3, e_4] = -\frac{3\sqrt{2}}{2}e_1,\\
 [e_3, e_5] = \frac{3\sqrt{2}}{2}e_2, [e_4, e_5] =2\sqrt{2}e_3, [e_4, e_6] = -e_5, [e_5, e_6] = e_4.
\end{gather*}
The bold font denotes the difference from the brackets on $\gr(\fk)$.  In this case, \eqref{ex_nenorm5} is not a normal extension. So, we still need to add a map  $\fk_{-3}\oplus\fk_{-2}\oplus \fk_{-1}\to \fg$ of positive homogeneity such that the normality condition \eqref{norm} is satisfied. This involves solving a system of linear equations (in several steps according to the homogeneity) and provides the following normal regular extension $(\alpha,i)$ given by
\begin{align} 
\alpha(x_1,\dots,x_6)&= \gr(\alpha)(x_1,\dots,x_6) \nonumber+ \frac49 \left[ \begin {smallmatrix} 0&  x_2&   x_1&- x_3&   x_5&   x_4&0\\  
0&0&0&-  \sqrt {2} x_1&\frac{\sqrt {2}}{2}  x_3 &0&- x_4\\ 
0&0&0& \sqrt {2} x_2&0&-\frac{\sqrt {2}}{2} x_3&-  x_5\\  
0&0&0&0& \sqrt {2} x_2&- \sqrt {2} x_1&-  x_3\\
0&0&0&0&0&0&- x_1\\
0&0&0&0&0&0&- x_2\\
 0&0&0& 0&0&0&0\end{smallmatrix} \right].
\end{align}
 The curvature is as follows, where we denote $z_{jk}=x_jy_k-x_ky_j$
\begin{gather} \label{235-cur}
\kappa(x_1,\dots,x_6,y_1,\dots,y_6)=\frac89
\left[ \begin {smallmatrix} 
0&  0&  0&0&  0&   0&0\\  
0&z_{42}+z_{15}&z_{14}+z_{52}&0&0&0&0\\ 
0&z_{25}+z_{41}&z_{51}+z_{24}& 0&0&0&0\\  
0&0&0&0&0&0&0\\
0&0&0&0&z_{42}+z_{15}&z_{41}+z_{25}&0\\
0&0&0&0&z_{14}+z_{52}&z_{51}+z_{24}&0\\
 0&0&0& 0&0&0&0\end {smallmatrix} \right].
\end{gather}

\subsection{Extension in CR geometry and theory of systems of PDEs} \label{CR-alp} \label{lagr-alp}
We consider two different real forms of parabolic geometries of type $(PGL(n+2,\C),P_{1,n+1})$.

\begin{enumerate}
\item[$(CR)$] Regular normal parabolic geometries of type $(PSU(p+1,q+1),P_{1,n+1})$, $n=p+q$, correspond to non--degenerate partially integrable almost CR structures of hypersurface type of (real) dimension $2n+1$ and signature $(p,q)$, \cite[Section 4.2.4.]{parabook}. In particular, these describe all real hypersurfaces in $\C^{p+q+1}$ with everywhere non--degenerate Levi form with signature $(p,q)$.
\item[$(LC)$] Regular normal parabolic geometries of type $(PGL(n+2,\R),P_{1,n+1})$ correspond to Lagrangean contact structures, \cite[Section 4.2.3.]{parabook}. In particular, these describe all complete systems of $2$nd order PDEs of one unknown function of several variables (considered up to point transformations)
\[
\frac{\partial^2u}{\partial y^i\partial y^j}= f_{ij} (y, u, \partial u),\ \  1 \leq i, j \leq n.
\]
\end{enumerate}
Torsion--free parabolic geometries of type $(PGL(n+2,\C),P_{1,n+1})$ obtained from (1) by complexification (in real analytic setting) describe compatible complete systems of 2nd order PDEs of one unknown complex function of several complex variables (considered up to point transformations) describing  Segre varieties, while those obtained from (2) are just the same equations but interpreted over complex numbers.

 We write elements of 
$\frak{su}(p+1,q+1)$ as $(1,n,1)$--block matrices with contact grading as follows $$\left[ \begin{smallmatrix} a & Z & {\rm i}z \\ X & A & -JZ^* \\ {\rm i}x & -X^*J & -\bar{a} \end{smallmatrix} \right],$$
where $A \in \frak{u}(p,q)$ and $a \in \C$ such that $a+tr(A)-\bar{a}=0$ form $\fg_0$, $X \in \C^{n} \simeq \fg_{-1}$, $Z \in \C^{n*} \simeq \fg_{1}$, $x \in \R \simeq \fg_{-2}$, $z \in \R \simeq \fg_2$. Here $J$ is the diagonal matrix of order $n$ with the first $p$ entries equal to $1$ and last $q$ entries equal to $-1$.
We write elements of 
$\frak{sl}(n+2,\R)$ as $(1,n,1)$--block matrices with contact grading as follows $$\left[ \begin{smallmatrix} a & Z_1 & z \\ X_1 & A & Z_2 \\ x & X_2 & b \end{smallmatrix} \right],$$
where $A \in \frak{gl}(n)$ and $a \in \R$ such that $a+tr(A)+b=0$ form $\fg_0$, $X_1,X_2^t \in \R^{n}$ form $\fg_{-1}$, $Z_1,Z_2^t$ form $\R^{n*} \simeq \fg_{1}$, $x \in \R \simeq \fg_{-2}$ and $z \in \R \simeq \fg_2$.
In both cases, $\fp_{1,n+1}$ corresponds to block upper triangular matrix. 

Let us emphasize that although the gradings look very similar, there is a significant difference in the Levi brackets 
$\wedge^2 \fg_{-1}\to \fg_{-2}$ that among other things causes the most differences in the two examples which we will focus on.
Let us consider the tubular hypersurface in $\mathbb{C}^3$ with coordinates $(z_1,z_2,w)$
$$\Re(w)=\Re(z_2)^2-\ln(\Re(z_1)),$$
and the  system of  PDEs in two variables with jet coordinates $(y_1,y_2,u,u_1,u_2)$
$$u_{11}=u_1^2,\ \ u_{12}=0,\ \ u_{22}=0.$$
Both geometries are known to be locally homogeneous, \cite{homogCR,homogLagr}, and it turns out that their Lie algebras of infinitesimal automorphisms only differ in one--dimensional subalgebra.  In particular, their complexification is the same geometry of type $(PGL(n+2,\C),P_{1,n+1})$ and the one--dimensional subalgebras in CR case becomes an imaginary part and in LC case a real part of the complexification. This provides remarkable differences and similarities which allow us to compare these geometries.

For the CR example, we rearrange the holomorphic vector fields from \cite[Table 8]{homogCR} generating the Lie algebra of infinitesimal automorphisms so that their evaluation at $o_{CR}:=(1,0,0) \in \mathbb{C}^3$
allows us to identify the associated grading $\gr(\fk_{CR})$ of the Lie algebra $\fk_{CR}$ as
\begin{align*}
\fh_{CR}=\fk_{CR}^0&=\langle {\rm i}z_2{\partial_{z_2}}+{\rm i}z_2^2{\partial_{w}}, {\rm i}(z_1^2-1){\partial_{z_1}}-2{\rm i}(z_1-1){\partial_{w}}\rangle(o_{CR}),\\
\fk_{CR}^{-1}/\fk_{CR}^0&=\langle \sqrt{2}z_1{\partial_{z_1}}-\sqrt{2}{\partial_{w}},{\partial_{z_2}}+2z_2{\partial_{w}},{ \scriptstyle\frac{\sqrt{2}}{2}}{\rm i}(1+z_1^2){\partial_{z_1}}-\sqrt{2}{\rm i}z_1{\partial_{w}},{\rm i}{\partial_{z_2}}\rangle(o_{CR}),\\
\fk_{CR}^{-2}/\fk_{CR}^{-1}&=\langle -{\rm i}{\partial_{w}}\rangle(o_{CR}).
\end{align*} 
Observe that the following complex structure on $\fk_{CR}^{-1}/\fk_{CR}^0$ corresponds to the complex tangent bundle
\begin{align*}
{\rm i}(\sqrt{2}z_1{\partial_{z_1}}-\sqrt{2}{\partial_{w}})(o_{CR})&={\rm i}\sqrt{2}{\partial_{z_1}}-{\rm i}\sqrt{2}{\partial_{w}},\\
{\rm i}({\partial_{z_2}}+2z_2{\partial_{w}})(o_{CR})&={\rm i}{\partial_{z_2}}.
\end{align*}

Analogously, for the LC example, we rearrange infinitesimal point transformations from \cite[Table A.2]{homogLagr} generating the Lie algebra of infinitesimal automorphisms so that their evaluation at $o_{LC}:=(0,0,0,0,0)$ allows us to identify the associated grading $\gr(\fk_{LC})$ of the Lie algebra $\fk_{LC}$ as
\begin{align*}
\fh_{LC}=\fk_{LC}^0&=\langle -y_1{\partial_{y_1}}+u_1{\partial_{u_1}},-y_2{\partial_{y_2}}+u_2{\partial_{u_2}}\rangle(o_{LC}),\\
\fk_{LC}^{-1}/\fk_{LC}^0&=\langle \partial_{y_1},\partial_{y_2},  -y_1^2 \partial_{y_1}+y_1 \partial_{u}+(2u_1y_1+1)\partial_{u_1},y_2\partial_{u}+\partial_{u_2}\rangle(o_{LC}),\\
\fk_{LC}^{-2}/\fk_{LC}^{-1}&=\langle \partial_{u}\rangle(o_{LC}).
\end{align*}
Observe that the following subspaces of $\fk_{LC}^{-1}/\fk_{LC}^0$  correspond to the distinguished (Lagrangean) subdistributions of the tangent space
\begin{align*}
E&=\langle \partial_{y_1},\partial_{y_2}\rangle(o_{LC}),\\
V&=\langle -y_1^2 \partial_{y_1}+y_1 \partial_{u}+(2u_1y_1+1)\partial_{u_1},y_2\partial_{u}+\partial_{u_2}\rangle(o_{LC}).
\end{align*}
Let us emphasize here two facts:
\begin{itemize}
\item[(i)] The Lie brackets on $\fk_{CR}$ and $\fk_{LC}$ are \emph{minus} the brackets of the corresponding vector fields.
\item[(ii)] The rearrangements are invariant for  $\fh_{CR}$ and $\fh_{LC}$, respectively, which is specific for the examples.
\end{itemize}
Le us write
$$
\fk_{-2}\oplus\fk_{-1}\oplus\fk_{0}:=\langle x_1e_1\rangle \oplus \langle x_2e_2+x_3e_3+x_4e_4+x_5e_5\rangle \oplus \langle x_6e_6+x_7e_7\rangle
$$
for both gradings of $\gr(\fk_{CR})$ and $\gr(\fk_{LC})$, where in the CR case we consider
\begin{gather*}
e_1:=-{\rm i}{\partial_{w}},\\ 
e_2:=\sqrt{2}z_1{\partial_{z_1}}-\sqrt{2}{\partial_{w}},\ \ 
e_3 :={\partial_{z_2}}+2z_2{\partial_{w}},\\  
e_4:={\scriptstyle \frac{\sqrt{2}}{2}}{\rm i}(1+z_1^2){\partial_{z_1}}-\sqrt{2}{\rm i}z_1{\partial_{w}},\ \
e_5:={\rm i}\partial_{z_2},\\
e_6:={\scriptstyle\frac{3}{2}}{\rm i}(z_1^2-1){\partial_{z_1}}+ {\rm i}z_2{\partial_{z_2}}+{\rm i}(z_2^2-3(z_1-1)){\partial_{w}},\\
e_7:={\scriptstyle \frac{1}{2} }{\rm i}(z_1^2-1){\partial_{z_1}}+ 3{\rm i}z_2{\partial_{z_2}}+{\rm i}(3z_2^2-(z_1-1)){\partial_{w}},
\end{gather*}
and in the LC case we consider
\begin{gather*}
e_1:=\partial_{u},\\ 
e_2:=\partial_{y_1},\ \ 
e_3:=\partial_{y_2},\\ 
e_4:=-y_1^2 \partial_{y_1}+y_1 \partial_{u}+(2u_1y_1+1)\partial_{u_1},\ \ 
e_5:=y_2\partial_{u}+\partial_{u_2},\\
e_6:={\scriptstyle \frac32} y_1{\partial_{y_1}}-{\scriptstyle \frac32}u_1{\partial_{u_1}}+{\scriptstyle \frac12} y_2{\partial_{y_2}}-{\scriptstyle \frac12}u_2{\partial_{u_2}},\\
e_7:={\scriptstyle \frac12} y_1{\partial_{y_1}}-{\scriptstyle \frac12}u_1{\partial_{u_1}}+{\scriptstyle \frac32} y_2{\partial_{y_2}}-{\scriptstyle \frac32}u_2{\partial_{u_2}},
\end{gather*}
Then we obtain
\begin{align} \label{ex_nenorm}
\gr(\alpha_{CR})(x_1,\dots,x_7)= \left[\begin{smallmatrix}
 -{\rm i}(x_6+x_7)& 0& 0 & 0\\
  x_2+{\rm i}x_4& 2{\rm i}x_6& 0 & 0\\
x_3+{\rm i}x_5& 0& 2{\rm i}x_7 & 0\\
 {\rm i}x_1& -x_2+{\rm i}x_4& -x_4+{\rm i}x_5&  -{\rm i}(x_6+x_7)
\end{smallmatrix}\right]
\end{align}
\begin{align} \label{ex_nenorm2}
\gr(\alpha_{LC})(x_1,\dots,x_7)= \left[\begin{smallmatrix}
 -x_6-x_7& 0& 0 & 0\\
  x_2& 2x_6& 0 & 0\\
x_3& 0& 2x_7 & 0\\
x_1& x_4& x_5&  -x_6-x_7
\end{smallmatrix}\right].
\end{align}
 by identification $\fk_{-2}\oplus\fk_{-1}=\fg_{-2}\oplus\fg_{-1}$ and by finding elements of $\fg_0$ acting as $e_6,e_7$ on it. The $H$--invariance of the gradings  $\gr(\fk_{CR})$ and $\gr(\fk_{LC})$ implies that $\gr(\alpha_{CR})$ and $\gr(\alpha_{LC})$ is a regular infinitesimal extensions of $(\fk_{CR},\fh_{CR})$ to $(\frak{su}(1,3),\fp_{1,3})$ and of $(\fk_{LC},\fh_{LC})$ to $(\frak{sl}(4,\R),\fp_{1,3})$, respectively.

For the chosen bases, the non--zero Lie brackets (that are minus Lie brackets of vector fields) are
\begin{gather*}
[e_2, e_4] = -2e_1{\bf-\frac34 e_6+\frac14 e_7}, [e_2, e_6] = -3e_4, [e_2, e_7] = -e_4,  [e_3, e_5] = -2e_1,\\
  [e_3, e_6] = -e_5, [e_3, e_7] =-3e_5, [e_4, e_6] = 3e_2, [e_4, e_7] = e_2, [e_5,e_6] = e_3, [e_5, e_7] = 3e_3.
\end{gather*}
on $\fk_{CR}$, where in bold font is the difference of Lie brackets in $\fk_{CR}$ and $\gr(\fk_{CR})$, and
\begin{gather*}
[e_2, e_4] =  -e_1{\bf +\frac34 e_6 -\frac14 e_7}, [e_2, e_6] = -3e_ 2, [e_2, e_7] = -e_{2},  [e_3, e_5] = {\bf -}e_1,\\
  [e_3, e_6] = -e_{3}, [e_3, e_7] =-3e_{ 3}, [e_4, e_6] = 3e_{ 4}, [e_4, e_7] = e_{ 4}, [e_5,e_6] = e_{ 5}, [e_5, e_7] = 3e_{ 5}.
\end{gather*}
on  $\fk_{LC}$, where in bold font is the difference of Lie brackets in $\fk_{LC}$ and  $\gr(\fk_{LC})$.

Extensions \eqref{ex_nenorm} and \eqref{ex_nenorm2} are not normal. So, we still need to add maps  $\fk_{-2}\oplus \fk_{-1}\to \fg$ of positive homogeneity such that the normality conditions \eqref{norm} are satisfied. This involves solving a system of linear equations (in several steps according to the homogeneity) and provides the following normal regular infinitesimal extensions
\begin{align} \label{CR-rozsireni}
\alpha_{CR}(x_1,\dots,x_7)= \gr(\alpha_{CR})(x_1,\dots,x_7)
+ \left[\begin{smallmatrix}
  \frac{1}{24}{\rm i}x_1 & \frac{5}{24}(x_2-{\rm i}x_4)&  \frac{1}{24}({\rm i}x_5-x_3) & \frac{13}{576}{\rm i}x_1\\
  0& -\frac{1}{6}{\rm i}x_1& 0 &*\\
0& 0&  \frac{1}{12}{\rm i}x_1 &  *\\
 0& 0& 0&   \frac{1}{24}{\rm i}x_1
\end{smallmatrix}\right]
\end{align}

\begin{align} \label{CR-rozsireni2}
\alpha_{LC}(x_1,\dots,x_7)= \gr(\alpha_{LC})(x_1,\dots,x_7)
+\left[\begin{smallmatrix}
  -\frac{1}{12}x_1 & \frac{5}{12}x_4&  -\frac{1}{12}x_5 & \frac{13}{144}x_1\\
  0& \frac{1}{3}x_1& 0 &\frac{5}{12}x_2\\
0& 0&  -\frac{1}{6}x_1 &  -\frac{1}{12}x_3\\
 0& 0& 0&   -\frac{1}{12}x_1
\end{smallmatrix}\right].
\end{align}
Let us again emphasize that the differences of $\alpha_{CR}$ and $\alpha_{LC}$ are only caused by the usual choice of gradings of $\frak{su}(1,3),\frak{sl}(4,\R)$ we made at the beginning.

 The curvatures are as follows, where we denote $z_{jk}=x_jy_k-x_ky_j$ and entries denoted by $*$ follow our conventions
\begin{gather} \label{CR-cur}
\kappa_{CR}(x_1,\dots,x_7,y_1,\dots,y_7)=
\left[\begin{smallmatrix}
   0 &  \frac{1}{48}(z_{14}+{\rm i}z_{12})&\frac{1}{48}(z_{51}+{\rm i}z_{31}) &\frac{{\rm i}}{24}(z_{42}+z_{35})\\
   0&\frac{{\rm i}}{3}(z_{53}+z_{24})&\frac{1}{6}(z_{23}+z_{45}+{\rm i}(z_{43}+z_{52})) & *\\
 0&  *& \frac{{\rm i}}{3}( z_{42}+z_{35}) &    *\\
  0& 0& 0&   0
 \end{smallmatrix}\right]
\end{gather}
\begin{gather} \label{CR-cur2}
\kappa_{LC}(x_1,\dots,x_7,y_1,\dots,y_7)=
\left[\begin{smallmatrix}
   0 &  \frac{1}{12}z_{41}&\frac{1}{12}z_{15} &\frac{{\rm i}}{12}(z_{42}+z_{35})\\
   0&\frac{1}{3}(z_{35}+z_{42})&\frac{1}{3}z_{25} & \frac{1}{12}z_{12}\\
 0&  \frac{1}{3}z_{34}& \frac{1}{3}( z_{53}+z_{24}) &   \frac{1}{12}z_{31}\\
  0& 0& 0&   0
 \end{smallmatrix}\right].
\end{gather}
Let us emphasize that the normalization conditions do not determine infinitesimal extensions $\alpha_{CR},\alpha_{LC}$ uniquely, because $\exp(\fg_2)$ acts trivially on the curvatures. The additional choice to determine $\alpha_{CR},\alpha_{LC}$ uniquely is to assume that image of $e_1$ in $\fg_0$ acts trivially on $\fg_{-2}$, i.e., the corresponding Weyl connection is closed.

\subsection{Extension in theory of systems of ODE's} \label{path-alp}

Regular normal parabolic geometries of type $(PGL(n+2,\R),P_{1,2})$ correspond to path geometries, \cite[Section 4.4.3.]{parabook}. We write elements of 
$\fg=\frak{sl}(n+2,\R)$ as $(1,1,n)$--block matrices with a $|2|$--grading as follows $$\left[ \begin{smallmatrix} a & z&  Z_2  \\ x & b& Z_1 \\ X_2 & X_1& A   \end{smallmatrix} \right],$$
where $A \in \frak{gl}(n)$ and $a \in \R$ such that $a+tr(A)+b=0$ form $\fg_0$, $X_i \in \R^{n} \subset \fg_{-i}$, $Z_i \in \R^{n*} \subset \fg_{i}$, $x \in \R \subset\fg_{-1}$, $z \in \R \subset \fg_1$. Then $\fp_{1,2}$ corresponds to block upper triangular matrices. Let us recall that parabolic geometries of type $(PGL(n+2),P_{1,2})$ can be (locally) viewed as $2$nd order systems of ODE of $n$ unknown functions in one variable (considered up to point transformations), i.e.
\[
\ddot{u}_i= f_{i} (t, u_j, \dot{u}_j),\ \  1 \leq i \leq n.
\]
Let us consider the following system of ODEs with jet coordinates $(t,u_1,u_2,p_1,p_2)$
$$\ddot{u}_1=0,\ \ \ddot{u}_2=p_1^3,$$
that is known to be submaximally (locally) homogeneous, \cite{KT}. We rearrange the vector fields from \cite[Proposition 5.3.2]{KT} generating the Lie algebra of infinitesimal point transformation so that their evaluation at $o:=(0,0,0,0,0)$ allows us to identify the underlying path geometry
\begin{align*}
\fh^{1}_f=\fk^{1}_f&= \langle t^2\partial_{t}+tu_1\partial_{u_1}+tu_2\partial_{u_2}+{\scriptstyle \frac12}u_1^3\partial_{u_2}-(p_1t-u_1)\partial_{p_1}+\\ &\ \ \ \ (u_2 -p_2t+{\scriptstyle \frac32} p_1u_1^2)\partial_{p_2} \rangle(o),\\
\fh^{0}_f/\fh^{1}_f=\fk^{0}_f/\fk^{1}_f&=\langle u_1{\partial_{u_2}}+p_1{\partial_{p_2}}, -t{\partial_{t}}-u_1{\partial_{u_1}}-2u_2{\partial_{u_2}}-p_2{\partial_{p_2}},\\ 
&\ \ \ \ 3t{\partial_{t}}+2u_1{\partial_{u_1}}+3u_2{\partial_{u_2}}-p_1{\partial_{p_1}}
\rangle(o),\\
\fk^{-1}_f/\fk^{0}_f&=\langle -3\partial_{t},  -{\scriptstyle \frac13}t \partial_{u_1}-{\scriptstyle \frac12}u_1^2 \partial_{u_2}-{\scriptstyle \frac13}\partial_{p_1}-u_1p_1\partial_{p_2},-{\scriptstyle \frac13}(t\partial_{u_2}+\partial_{p_2})\rangle(o),\\
\fk^{-2}_f/\fk^{-1}_f&=\langle \partial_{u_1},\partial_{u_2}\rangle(o).
\end{align*}
Nevertheless, we choose a smallest Lie subalgebra $\fk$ of $\fk_f$ which allows us to find the infinitesimal extension describing the parabolic geometry. Thus we require $\fk^{-1}/\fk^{0}=\fk^{-1}_f/\fk^0_f$ and $\fk^{-2}/\fk^{-1} =\fk^{-2}_f/\fk^{-1}_f$, 
and it turns out that $$\fh=\fk^{0}=\langle u_1{\partial_{u_2}}+p_1{\partial_{p_2}}\rangle(o).$$  
 This choice makes no difference locally, and we show later what difference it makes from the global viewpoint.

Observe that the following subspaces of $\fk^{-1}/\fk^0$ correspond to the distinguished subdistributions of the tangent space, i.e., they are preserved by $\fh$
\begin{align*}
E&=\langle -3\partial_{t}\rangle(o),\\
V&=\langle -{\scriptstyle \frac13}t \partial_{u_1}-{\scriptstyle \frac12} u_1^2 \partial_{u_2}-{\scriptstyle \frac13}\partial_{p_1}-u_1p_1\partial_{p_2},-{\scriptstyle \frac13}(t\partial_{u_2}+\partial_{p_2})\rangle(o).
\end{align*}
Let us  write
$$
\fk_{-2}\oplus\fk_{-1}\oplus\fk_{0}:=\langle x_1e_1+x_2e_2\rangle \oplus \langle x_3e_3+x_4e_4+x_5e_5\rangle \oplus \langle x_6e_6\rangle,
$$
where
\begin{gather*}
e_1:=\partial_{u_1},\ \ 
e_2:=\partial_{u_2},\\ 
e_3:=-3\partial_{t},\ \
e_4:= -{\scriptstyle \frac13}t \partial_{u_1}-{\scriptstyle \frac12}u_1^2 \partial_{u_2}-{\scriptstyle \frac13}\partial_{p_1}-u_1p_1\partial_{p_2},\ \
e_5:=-{\scriptstyle \frac13}(t\partial_{u_2}+\partial_{p_2}),\\ 
e_6:=u_1{\partial_{u_2}}+p_1{\partial_{p_2}}.
\end{gather*}
Again, by identifying $\fk_{-2}\oplus\fk_{-1}=\fg_{-2}\oplus\fg_{-1}$ and finding element of $\fg_0$ acting as $e_6$, we obtain a regular infinitesimal extension of $(\fk,\fh)$ to $(\frak{sl}(4,\R),\fp_{1,2})$
\begin{align} \label{ex_nenorm3}
\gr(\alpha)(x_1,\dots,x_6)= \left[\begin{smallmatrix}
0& 0& 0 & 0\\
  x_3& 0& 0 & 0\\
x_1& x_4& 0 & 0\\
x_2& x_5& x_6& 0
\end{smallmatrix}\right].
\end{align}

For the chosen basis of $\fk$, the (non--zero) Lie brackets (that are minus Lie brackets of vector fields) are as follows
\begin{gather*}
[e_1, e_4] ={\bf e_6}, [e_1, e_6] = -e_2, [e_3, e_4] = -e_1, [e_3, e_5] = -e_2, [e_4, e_6] = -e_5.
\end{gather*}
The bold font denotes the difference from the bracket on $\gr(\fk)$, which is in this case the harmonic curvature of the geometry, i.e., this is already a normal extension. Thus $\alpha=\gr(\alpha)$ and the curvature takes form
\begin{gather} \label{Ode-cur}
\kappa(x_1,\dots,x_6,y_1,\dots,y_6)=
\left[\begin{smallmatrix}
   0 &  0 &0&0\\
   0&0&0 & 0\\
 0&  0& 0 &  0\\
  0& 0& -x_1y_4+x_4y_1&   0
 \end{smallmatrix}\right].
\end{gather}

\section{First BGG operators on tractor bundles and their solutions} \label{bggtheory}

In the section, we assume that $(\alpha,i)$ is a regular normal extension of $(K,H)$ to $(G,P)$. Further, we assume that the vector bundle $$\mathcal{V}:=K\times_{\lambda\circ i(H)}V$$ is such that the representation $d\lambda$ of $\fp$ on $V$ extends to the representation $\rho$ of $\fg$ on $V$.

\subsection{Tractor bundles and connections}
Our assumptions specify a special class of vector bundles that always admit a natural connection.

\begin{def*}
We say that the vector bundle $\mathcal{V}$ is a \emph{tractor bundle} and that the $K$--invariant linear connection
\begin{align*}
\nabla^{\rho\circ \alpha}=D^\fk+\rho\circ \alpha
\end{align*}
is the \emph{tractor connection} on the tractor bundle $\mathcal{V}$. 
\end{def*}

Let us emphasize that $\rho\circ \alpha(Y)=d\lambda\circ \alpha(Y)$ for all $Y\in \fh$ and thus $\nabla^{\rho\circ \alpha}$ is indeed $K$--invariant according to Lemma \ref{inv-con}. 
\begin{rem}
In fact, $\rho \circ \alpha$ is a representation of $\fk$ if and only if $\kappa=0$. That is, $\mathcal{V}$ is not a tractor bundle from the viewpoint of the category of Cartan geometries of type $(K,H)$, in general.
\end{rem}
The representation  $\rho$ induces an algebraic differential $\partial_{\fg}: \wedge^{k}\fg^*\otimes V\to \wedge^{k+1}\fg^*\otimes V$ 
\begin{align*}
\partial_{\fg} &\phi(X_0, \dots, X_k)=\sum_{j}
(-1)^j\rho(X_j)(\phi(X_0, \dots,\hat X_j,\dots, X_k))\\
&+(-1)^{j+l}\sum_{j<l}\phi([X_j,X_l]_\fg,X_0, \dots,\hat X_j,\dots,\hat X_l,\dots, X_k).
\end{align*}
This implies that the components of homogeneity $0$ of the covariant exterior derivative corresponding to the tractor connection and an algebraic differential  $\partial_{\fg_-}: \wedge^{k}\fg_-^*\otimes V\to \wedge^{k+1}\fg_-^*\otimes V$ coincide.

\begin{def*}
The operator
\begin{align*}
\Box:= \partial^*\circ \partial_{\fg_-}+ \partial_{\fg_-}\circ \partial^*: \wedge^{k}\fp_+\otimes V\to \wedge^{k}\fp_+\otimes V
\end{align*}
is called the \emph{Kostant's Laplacian}.
\end{def*}

The crucial fact for the construction of BGG operators on tractor bundles is that the restriction of $\Box$ to $Im(\partial^*)$ is invertible, because the eigenspaces of $\Box$  are $G_0$--subrepresentations of $\wedge^{k}\fg\otimes V$. In particular, the inversion of $\Box$ is a polynomial in `variable' $\partial^*\circ \partial_{\fg_-}$ and this polynomial (its coefficients) will play the role of the operator $Q$ required in condition \hyperref[cc]{$\ddd$}.

\subsection{Standard and non--strandard BGG operators}
The component of homogeneity $0$ of $\partial^*\circ \nabla^{\rho\circ \alpha}$ acting on $Ker(\partial^*)$ coincides with the Kostant's Laplacian $\Box$. Thus $ \nabla^{\rho\circ \alpha}-\partial_{\fg_-}$ is an operator of homogenity $>0$. Define $Q: Ker(\partial^*)\to Ker(\partial^*)$ as a polynomial in $\partial^*\circ \nabla^{\rho\circ \alpha}$ having the same coefficients as the inverse of $\Box$ also viewed as a polynomial in $\partial^*\circ \partial_{\fg_-}$.
Then $Q$ satisfies the condition \hyperref[cc]{$\ddd$}.

Moreover, if we consider another $K$--invariant linear connection $\nabla^{\rho\circ \alpha+\psi}$ for some $H$--invariant element $$\psi\in (\fk^*\otimes \frak{gl}(V))^1$$ vanishing on $\fh$, then we can define $Q^\psi$ as the polynomial in $\partial^*\circ \nabla^{\rho\circ \alpha+\psi}$ with the same coefficients as $Q$. For the same reasons,  $Q^\psi$ also satisfies the condition \hyperref[cc]{$\ddd$}. We denote by $\mathcal{L}_0$ and $\mathcal{L}_0^\psi$, respectively, the corresponding \emph{splitting operators} and define following classes of first BGG operators.

\begin{def*}
The $K$--invariant linear connection $\nabla^\Phi$ on the tractor bundle $\mathcal{V}$ for the representation $\rho$ of $\fg$ on $V$ is \emph{compressable} if
$$\Phi=\rho\circ \alpha+\psi$$ for some $H$--invariant element $\psi\in (\fk^*\otimes \frak{gl}(V))^1$ vanishing on $\fh$. 
The operator $$\mathcal{D}^{st}:=\pi_1\circ \nabla^{\rho\circ \alpha}\circ \mathcal{L}_0$$ for the $K$--invariant compressable connection $\na^{\rho \circ \alpha}$ is the \emph{standard first BGG operator}, while the other operators $$\mathcal{D}=\pi_1\circ \nabla^{\rho\circ \alpha+\psi}\circ \mathcal{L}_0^\psi$$ for compressable $K$--invariant linear connections $\nabla^{\rho\circ \alpha+\psi}$ are \emph{non--standard first BGG operators}.
\end{def*}

Clearly, if $\psi(s)\in Im(\partial^*)$ for all $H$--equivariant functions $s: K\to V$, then also the compressable $K$--invariant linear connection $\nabla^{\rho\circ \alpha+\psi}$ satisfies $\mathcal{L}_0=\mathcal{L}_0^\psi$ and $$\mathcal{D}^{st}=\pi_1\circ \nabla^{\rho\circ \alpha+\psi}\circ \mathcal{L}_0.$$

\subsection{The prolongation connection} \label{sec-prol}

Assume $s$ is a parallel section of a compressable $K$--invariant linear tractor connection $\nabla^{\rho\circ \alpha+\psi}$ on a tractor bundle $\mathcal{V}$.
Then 
$$\nabla^{\rho\circ \alpha+\psi}\circ \mathcal{L}_0^\psi(\pi_0(s))=\nabla^{\rho\circ \alpha+\psi} s-\nabla^{\rho \circ \alpha+\psi}\circ Q^\psi\circ \partial^*\circ \nabla^{\rho\circ \alpha+\psi} s =0.$$ This shows that $\si:=\pi_0(s)$ for $s$ such that $\nabla^{\rho\circ \alpha+\psi} s=0$ is a solution of the first BGG operator 
$$D=\pi_1\circ \nabla^{\rho\circ \alpha+\psi}\circ \mathcal{L}_0^\psi,$$
 that is $D(\si)=0$. The converse is not true in general. Indeed, $\mathcal{L}_0^\psi(\sigma)$ does not have to be a parallel section of  $\nabla^{\rho\circ \alpha+\psi}$ even if $\si$ satisfies  $D(\sigma)=0$. However, if $\nabla^{\rho\circ \alpha+\psi}\circ \mathcal{L}_0^\psi(\sigma)$ is contained in $Ker(\partial^*\circ \nabla^{\rho\circ \alpha+\psi})$, or equivalently, if $\partial^*\circ R^{\rho\circ \alpha+\psi}\circ \mathcal{L}_0^\psi(\sigma)=0$ holds for the curvature $R^{\rho\circ \alpha+\psi}$ of $ \nabla^{\rho\circ \alpha+\psi}$, then $\nabla^{\rho\circ \alpha+\psi}\circ \mathcal{L}_0^\psi(\sigma)=\mathcal{L}_1^\psi\circ D(\sigma)=0$
for $\si$ such that  $D(\sigma)=0$.

\begin{prop}[\cite{new-norm,casimir}]
There is a unique $\Psi \in (\fk^*\otimes \frak{gl}(V))^1$ vanishing on $\fh$ such that $\partial^*\circ R^{\rho\circ \alpha+\Psi}(s)=0$
 for all $s\in V$.
Then all solutions of $D=\pi_1\circ \nabla^{\rho\circ \alpha+\Psi}\circ \mathcal{L}_0^{\Psi}$ are in bijective correspondence with parallel sections of the connection $\nabla^{\rho\circ \alpha+\Psi}$.
\end{prop}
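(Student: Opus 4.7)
My plan is to establish the two assertions separately: first the existence and uniqueness of $\Psi$, then the bijection between solutions of $D$ and parallel sections of $\nabla^{\rho\circ\alpha+\Psi}$.

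For the construction of $\Psi$, I would view the equation $\partial^*\circ R^{\rho\circ\alpha+\Psi}(v)=0$ for all $v\in V$ as an algebraic condition on the value of the $K$--invariant curvature tensor at a single point. Starting from the expansion $R^{\rho\circ\alpha+\Psi}(X_0,X_1)=\rho(\kappa(X_0,X_1))+[\rho\circ\alpha(X_0),\Psi(X_1)]-[\rho\circ\alpha(X_1),\Psi(X_0)]-\Psi([X_0,X_1]_\fk)+[\Psi(X_0),\Psi(X_1)]$ and decomposing $\Psi=\sum_{\ell\geq 1}\Psi_\ell$ by homogeneity, the component of homogeneity $\ell$ of the target equation takes the form $\Box\,\Psi_\ell(v)=F_\ell(v;\kappa,\Psi_1,\dots,\Psi_{\ell-1})$, where $\Box$ is the Kostant Laplacian. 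The Laplacian appears because the leading action of $\rho\circ\alpha$ on $\fg_-\simeq\fk/\fh$ is precisely $\partial_{\fg_-}$, so $\partial^*$ applied to the term linear in $\Psi_\ell$ reproduces $\partial^*\circ\partial_{\fg_-}\Psi_\ell(v)=\Box\,\Psi_\ell(v)$ once one imposes $\Psi_\ell(v)\in Im(\partial^*)$ as normalization. Invertibility of $\Box$ on $Im(\partial^*)$ then yields $\Psi_\ell$ uniquely at each level. The induction terminates because the filtration of $V$ is finite, and $H$--equivariance of $\Psi_\ell$ is preserved at every step because $\kappa$, $\rho\circ\alpha$, $\partial^*$ and $\Box$ are all $H$--equivariant.

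For the bijection, one direction is immediate from Section \ref{sec-prol}: if $\nabla^{\rho\circ\alpha+\Psi}s=0$ then $\sigma:=\pi_0(s)$ satisfies $D(\sigma)=0$ and $\mathcal{L}_0^\Psi(\sigma)=s$. For the converse, I would take $\sigma$ with $D(\sigma)=0$ and set $s:=\mathcal{L}_0^\Psi(\sigma)$; by construction of the splitting operator, $\nabla^{\rho\circ\alpha+\Psi}s\in Ker(\partial^*)$. Since $R^{\rho\circ\alpha+\Psi}$ is $K$--invariant, its pointwise action on the section $s$ is determined by its algebraic action on each fibre value $s(k)\in V$, so the first part of the proposition forces the pointwise identity $\partial^*\circ R^{\rho\circ\alpha+\Psi}(s)=0$. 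The argument recalled immediately before the proposition then yields $\nabla^{\rho\circ\alpha+\Psi}s=\mathcal{L}_1^\Psi\circ D(\sigma)=0$, so $s$ is parallel and the bijection is complete.

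The main obstacle is verifying that at each level $\ell$ the right--hand side $F_\ell(v)$ indeed lies in $Im(\partial^*)$, so that $\Box^{-1}$ can be applied. I would combine the Bianchi identity for the partial curvature $R^{\rho\circ\alpha+\Psi_{<\ell}}$ with normality $\partial^*\kappa=0$ and the induction hypothesis on $\Psi_1,\dots,\Psi_{\ell-1}$ to conclude this. The detailed bookkeeping is carried out in the cited prolongation--connection references and can be invoked directly once the equations are expressed in the algebraic language above.
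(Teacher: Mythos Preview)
The paper does not prove this proposition independently; it is cited from the references, and what the paper supplies instead is the concrete iterative recipe displayed immediately after the statement: set $\Psi_0:=\psi$ and $\Psi_k:=\Psi_{k-1}-\tfrac{1}{a_k}(\partial^*\otimes\id_{V^*})\circ R^{\rho\circ\alpha+\Psi_{k-1}}$, with the $a_k$ the eigenvalues of $\Box$ ordered by homogeneity. Your homogeneity-by-homogeneity induction is the same mechanism phrased as ``solve $\Box\,\Psi_\ell=F_\ell$'' rather than ``subtract $\Box^{-1}$ of the current defect''; the paper even notes that its correction coincides with $\Box$ in the lowest nonzero homogeneity, which is exactly your key observation. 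Both versions force the correction to take values in $Im(\partial^*)\otimes V^*$, and that is what yields uniqueness; you are right that the proposition's bare wording leaves this normalisation implicit.

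Your final paragraph overstates the difficulty. At level $\ell$ the equation reads $\partial^*\partial_{\fg_-}\Psi_\ell(v)=-(\partial^*R^{\rho\circ\alpha+\Psi_{<\ell}})_\ell(v)$, and the right-hand side is $\partial^*$ applied to an explicit $2$-form, hence lies in $Im(\partial^*)$ tautologically. No Bianchi identity and no normality of $\kappa$ are needed for this step; normality $\partial^*\kappa=0$ enters only in the separate question of when $\Psi=0$, which the paper addresses in the following proposition. Your treatment of the bijection is correct and matches the argument the paper records in the paragraph preceding the proposition.
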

\begin{def*}
The connection $\nabla^{\rho\circ \alpha+\Psi}$  is the \emph{prolongation connection} of $D$.
\end{def*}
We show how to construct $\Psi$ from arbitrary $\psi$ by the following iterative process: 
\begin{enumerate}
\item
Set
$\Psi_0:=\psi$.
\item Compute 
$$\Psi_k:=\Psi_{k-1}-\frac{1}{a_k}(\partial^*\otimes \id_{V^*})\circ R^{\rho\circ \alpha+\Psi_{k-1}},$$
where $a_k$ depends on eigenvalues of the action of $\Box$ on the space $\fp_+\otimes V$. 
\item Since the image of $\partial^*\otimes \id_{V^*}$ does not lower the homogeneity and coincides with $\Box$ in the lowest nonzero homogeneity of the image, we get $\Psi:=\Psi_k$ in finitely many steps after ordering the eigenvalues according to homogeneity, because there is only a finite number of $\fg_0$--components in $\fp_+\otimes V$.
\end{enumerate}

\begin{rem}
In general, the eigenvalues of the action of $\Box$ on the space $\fp_+\otimes V$ can be computed using formulas from \cite{casimir}. However, in many cases, it suffices to choose directly $a_k$  that kills some part of $(\partial^*\otimes \id_{V^*})\circ R^{\rho\circ \alpha+\Psi_{k}}$ in the lowest homogeneity.
\end{rem}

There is the following condition for the tractor connection to coincide with the prolongation connection on homogeneous parabolic geometries.

\begin{prop}\label{norm2}
If $\psi=0$ and $\sum_i \rho(\kappa(X,X_i))(\rho(Z_i)s)=0$ holds for all $X\in \fg$ and $s\in V$, then $\Psi=0$, where
the elements $X_i\in \fg$ are representatives of a basis of $\mathfrak{g}/\mathfrak{p}$ and the elements $Z_i\in \fp_+$ form the corresponding dual basis of $(\mathfrak{g}/\mathfrak{p})^*$.
\end{prop}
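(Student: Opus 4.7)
The plan is to apply the uniqueness in the preceding proposition: since the candidate $\Psi = 0$ trivially vanishes on $\fh$ and lies in $(\fk^* \otimes \frak{gl}(V))^1$, it suffices to check that it satisfies $\partial^* \circ R^{\rho \circ \alpha}(s) = 0$ for every $s \in V$; uniqueness then forces $\Psi = 0$. Equivalently, the iterative construction terminates after the first step since $\Psi_1 = \Psi_0 - 0 = 0$.

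I would first replace the tractor curvature by $\rho \circ \kappa$. Applying the curvature formula from the earlier lemma with $\Phi = \rho \circ \alpha$, and using that $\rho$ is a representation of $\fg$, gives $R^{\rho \circ \alpha}(X_0, X_1) = \rho\bigl([\alpha(X_0), \alpha(X_1)]_\fg - \alpha([X_0, X_1]_\fk)\bigr) = \rho(\kappa(X_0, X_1))$. After the Killing--form identification $\fp_+ \cong (\fg/\fp)^*$, the explicit formula for $\partial^*$ on $\wedge^2 \fp_+ \otimes V$ shows that the coefficient of $Z_c$ in $\partial^*(\rho \circ \kappa)(s)$ equals
\[
\sum_a \rho(Z_a)\,\rho(\kappa(X_c, X_a))\,s \;-\; \tfrac{1}{2}\sum_{a,b} C_{ab}^c\, \rho(\kappa(X_a, X_b))\,s,
\]
where $[Z_a, Z_b]_\fg = \sum_c C_{ab}^c Z_c$ are the structure constants of $\fp_+$ in the basis $\{Z_i\}$.

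The first sum can be disposed of using the hypothesis together with normality. By the hypothesis at $X = X_c$, the sum $\sum_a \rho(\kappa(X_c, X_a))\rho(Z_a)s$ vanishes, so the commutator identity yields $\sum_a \rho(Z_a)\rho(\kappa(X_c, X_a))s = \sum_a \rho([Z_a, \kappa(X_c, X_a)]_\fg)s$; then \eqref{norm} with $X = X_c$ rewrites this as $\tfrac{1}{2}\sum_a \rho(\kappa([Z_a, X_c]_\fg, X_a))s$. To close the loop I would use $\Ad$--invariance of the Killing form $B$ (normalised so that $B(Z_i, X_j) = \delta_{ij}$): writing $[Z_a, X_c]_\fg \equiv \sum_b D_{ac}^b X_b$ modulo $\fp$, invariance gives $D_{ac}^b = -C_{ab}^c$, and together with antisymmetry of $\kappa$ this turns the first sum into $\tfrac{1}{2}\sum_{a,b} C_{ab}^c \rho(\kappa(X_a, X_b))s$, which exactly cancels the second. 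The main obstacle is this bookkeeping step: propagating signs and antisymmetries correctly when translating between structure constants of $\fp_+$ and the $\fp_+$--action on $\fg/\fp$. All other ingredients, namely the curvature formula, the explicit Kostant codifferential, the hypothesis, and the normality condition \eqref{norm}, are immediately at hand.
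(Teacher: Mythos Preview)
Your proof is correct and follows essentially the same approach as the paper's. The paper's argument is compressed into a single line: since $R^{\rho\circ\alpha}=(\id\otimes\rho)(\kappa)$ and $\partial^*\kappa=0$, one has $(\partial^*\otimes\id_{V^*})\circ R^{\rho\circ\alpha}(X,s)=2\sum_i\rho(\kappa(X,X_i))(\rho(Z_i)s)$, which vanishes by hypothesis. Your version simply unpacks this identity explicitly---the commutator step, the use of the normality condition \eqref{norm}, and the Killing--form bookkeeping $D_{ac}^b=-C_{ab}^c$ are precisely what the phrase ``because $\partial^*\kappa=0$'' encodes.
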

\begin{proof}
In the first step, for $\psi=0$ we get $R^{\rho\circ \alpha}=(\id_{\wedge^2 \fk/\fh^*}\otimes \rho)(\kappa)$ and thus $(\partial^*\otimes \id_{V^*})\circ R^{\rho\circ \alpha}(X,s)=2\sum_i \rho(\kappa(X,X_i))(\rho(Z_i)s)$, because $\partial^*\kappa=0$.
\end{proof}

\subsection{Automorphisms connection}

There is a distinguished connection for
$$\Phi^{aut}(s)(t)=\ad(\alpha(s))(t)-\iota_\kappa(\alpha(s))(t)$$ 
on the adjoint tractor bundle $\mathcal{A}$ whose parallel sections are infinitesimal automorphisms of the parabolic geometry, where 
$\iota_\kappa(\alpha(s))(t):=\kappa(\alpha(s),t)$ denotes the inclusion into the curvature, \cite{deform}. 
\begin{prop}
Infinitesimal automorphisms of the parabolic geometry on $K/H$ correspond to solutions of 
$$D^{aut}:=\pi_1\circ \nabla^{\ad\circ \alpha-\iota_\kappa}\circ \mathcal{L}_0^{-\iota_\kappa}.$$
In particular, $\fk\subset \mathcal{S}^\infty$ for $\nabla^{\ad\circ \alpha-\iota_\kappa}$.
\end{prop}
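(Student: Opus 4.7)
The plan is to identify $\Phi^{aut}=\ad\circ\alpha-\iota_\kappa$ as a compressable $K$-invariant connection on the adjoint tractor bundle $\mathcal{A}$, show that $\nabla^{\Phi^{aut}}$ coincides with the prolongation connection of $D^{aut}$, and then combine the general BGG/prolongation correspondence with the standard deformation-theoretic description of the parallel sections of $\nabla^{\Phi^{aut}}$.

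First I would verify that the perturbation $\psi:=-\iota_\kappa$ qualifies for the compressable setup with $\rho=\ad$ and $V=\fg$. Regularity of $(\alpha,i)$ gives $\kappa\in\Gamma(\wedge^2T^*(K/H)\otimes\mathcal{A})^1$, hence $\psi\in(\fk^*\otimes\frak{gl}(\fg))^1$. For $Y\in\fh$, one has $\alpha(Y)\in\fp$, and since $\kappa$ descends to a form valued in $\wedge^2(\fg/\fp)^*\otimes\fg$, the contraction $\iota_\kappa(\alpha(Y))$ vanishes; $H$-equivariance of $\psi$ is inherited from that of $\alpha$ and $\kappa$. So $\nabla^{\Phi^{aut}}$ is a compressable $K$-invariant connection on $\mathcal{A}$, and I would then cite \cite{deform} (see also \cite{parabook}) for the fact that its parallel sections correspond bijectively to infinitesimal automorphisms of the parabolic geometry on $K/H$.

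The main step, and the principal obstacle, is showing $\nabla^{\Phi^{aut}}$ is the prolongation connection of $D^{aut}$, i.e.\ $\partial^*R^{\Phi^{aut}}(s)=0$ for every $s\in\fg$. Using the curvature formula from Section~\ref{paralel},
\[
R^{\Phi^{aut}}(X_0,X_1)=[\Phi^{aut}(X_0),\Phi^{aut}(X_1)]-\Phi^{aut}([X_0,X_1]_\fk),
\]
one expands with $\Phi^{aut}=\ad\circ\alpha-\iota_\kappa$: the purely $\ad$-contributions collapse, via the Jacobi identity and the definition of $\kappa$, to $\ad(\kappa(X_0,X_1))$, whose $\partial^*$ vanishes by the normality $\partial^*\kappa=0$; the mixed and double-$\kappa$ contributions are then precisely the combination controlled by the Bianchi identity for the Cartan curvature, and they are annihilated by $\partial^*$ after one further use of normality. (Alternatively, the same conclusion follows by running the iterative procedure of Section~\ref{sec-prol} starting from $\psi=-\iota_\kappa$ and observing, with the aid of Proposition~\ref{norm2} in the shifted setting, that $\Psi_1=\Psi_0=-\iota_\kappa$.)

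Once this is established, the Proposition on the prolongation connection yields a bijection between solutions of $D^{aut}$ and parallel sections of $\nabla^{\Phi^{aut}}$, which by the deformation result are exactly the infinitesimal automorphisms, proving the first claim. For the inclusion $\fk\subset\mathcal{S}^\infty$, I would observe that by the very choice of $K$, every $X\in\fk$ is an infinitesimal automorphism, so under the just-established bijection it corresponds to a globally defined parallel section of $\nabla^{\Phi^{aut}}$; by Proposition~\ref{prop1.1}, the value of this section at any point of $K$ lies in $\mathcal{S}^\infty$, giving $\alpha(\fk)\subset\mathcal{S}^\infty_K\subset\mathcal{S}^\infty$.
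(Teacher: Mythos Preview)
Your overall strategy matches the paper's: verify compressability, invoke \cite{deform} for the identification of parallel sections of $\nabla^{\Phi^{aut}}$ with infinitesimal automorphisms, use the prolongation property to pass to solutions of $D^{aut}$, and deduce $\fk\subset\mathcal{S}^\infty$. The paper's proof is terser---it simply asserts that $-\iota_\kappa$ plays the role of $\Psi$ (implicitly deferring to \cite{deform}) rather than attempting your Bianchi computation, which is reasonable in spirit but left vague at the crucial step.

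The one place where your argument differs in substance is the inclusion $\fk\subset\mathcal{S}^\infty$. You go through the bijection: $X\in\fk$ is an infinitesimal automorphism, hence corresponds to a global parallel section, whose value lies in $\mathcal{S}^\infty$. But you never say why that value at $e$ is $\alpha(X)$; without this identification you only get that $\mathcal{S}^\infty$ has dimension at least $\dim\fk$, not the stated inclusion. The paper's proof avoids this by a direct one-line computation: from the definition of $\kappa$ one has
\[
\Phi^{aut}(X)\bigl(\alpha(Y)\bigr)=\ad(\alpha(X))(\alpha(Y))-\kappa(\alpha(X),\alpha(Y))=\alpha([X,Y]_\fk),
\]
so $\alpha(\fk)$ is $\Phi^{aut}$-invariant and (by Jacobi in $\fk$) annihilated by $R^{\Phi^{aut}}$, giving $\alpha(\fk)\subset\mathcal{S}^0$ and then $\alpha(\fk)\subset\mathcal{S}^\infty$ immediately. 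This is both more elementary and more informative than routing through the bijection, and it makes the second claim independent of the first.
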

\begin{proof}
Since $-\iota_\kappa=\Psi$ for the corresponding prolongation connection, the infinitesimal automorphisms correspond to solutions of $D^{aut}$. Thus by our assumption, $D^{aut}$ contains $\alpha(\fk)$ as solutions. Indeed,  $$\ad(\alpha(X))(\alpha(Y))-\kappa(\alpha(X),\alpha(X))=\alpha([X,Y]_\fk)$$ holds for $X,Y\in \fk$ and thus $\fk\subset \mathcal{S}^\infty$.
\end{proof}
The first BGG operator  
$D^{aut}$
does not have to be the standard, although there are known sufficient conditions for $D^{aut}=D^{st}$. For example, \cite[Theorem 3.5.]{deform} states that $D^{aut}=D^{st}$ for torsion--free geometries that satisfy a certain condition on the first cohomology $H^1(\fp_+,\fg)$. However, we show on projective and C-projective examples in Sections \ref{solproj} and \ref{cproj-S} that $D^{aut}\neq D^{st}$  for geometries that do not satisfy such  condition.

\subsection{Normal solutions of BGG operators and holonomy reductions} \label{normal} There is the following important class of solutions, \cite{CGH1}.

\begin{def*}
The solutions $\nu$ of $D^{st}$ such that $$\nabla^{\rho\circ \alpha}\mathcal{L}_0(\nu)=0$$ are called \emph{normal solutions} of the standard first BGG operator. 
\end{def*}

Normal solutions satisfy an additional identity $\Psi(\mathcal{L}_0(\nu))=0$ for $\Psi \in (\fk^*\otimes \frak{gl}(V))^1$ describing the prolongation connection. This often means that the solutions have some additional distinguished properties, \cite{CGM,SW}.

Another important property of normal solutions is their compatibility with tensor products. Consider tractor bundles $\mathcal{V}$ and $\mathcal{W}$ for $\fg$--representations $\rho_V$ and $\rho_W$ extending representations  $\lambda^V$ of $P$ on $V$ and $\lambda^W$ of $P$ on $W$. Then $$\nabla^{(\rho_V\otimes \rho_W)\circ \alpha}(s\otimes t)=\nabla^{\rho_V\circ \alpha}(s)\otimes t+s\otimes \nabla^{\rho_W\circ \alpha}(t)$$
on the tractor bundle $\mathcal{V}\otimes \mathcal{W}$. Thus if $s=\mathcal{L}_0(\nu_V)$ and $t=\mathcal{L}_0(\nu_W)$ are solutions of $D^{st}$ on $\mathcal{V}$ and $\mathcal{W}$, then $s\otimes t$ is a solution of  $D^{st}$ on  $\mathcal{V}\otimes \mathcal{W}$. This is usually referred as \emph{BGG coupling} and to find all the normal solutions on all tractor bundles it suffices to consider the basic representations of the Lie algebra $\fg$ that generate all the others, \cite{coupling}.

On each (locally) homogeneous parabolic geometry, we can directly compute the infinitisimal holonomy of $\nabla^{\rho\circ \alpha}$ viewed as a subalgebra of $\fg$ using the formulas for the sets $\mathcal{S}^i$ from Section \ref{paralel}. This also provides the connected component of identity of the holonomy group of $\nabla^{\rho\circ \alpha}$ in $G$, which provides (by definition) the following interpretation of normal solutions as holonomy reductions, \cite{hol}.

\begin{def*}
Let $P_v$ be the stabilizer of $v\in V$ and denote $\mathcal{O}:=G/P_v=Gv\subset V$. Then we say that an $H$--equivariant function $s: K\to \mathcal{O}\subset V$ parallel with respect to the induced (non--linear) connection on $K\times_{\rho\circ i(H)}\mathcal{O}$ is a \emph{holonomy reduction of $G$--type} $\mathcal{O}$.
\end{def*}

We discuss the geometric interpretation of holonomy reductions on examples in detail in next sections.

\section{Examples of first BGG operators and local solutions} \label{bggexam}

In this Section, we continue in the discussion of (locally) homogeneous parabolic geometries from Section \ref{priklady-alp} described by the extensions $(\alpha,i)$ or infinitesimal extensions $\alpha$. We consider several different tractor bundles and study corresponding BGG operators and local solution sets $\mathcal{S}^\infty$ as follows. 
\begin{enumerate}
\item We compute the prolongation connection using the algorithm from Section \ref{sec-prol}. We compute and present here only its difference from the tractor connection. In Examples \ref{solproj}, \ref{cproj-S} we also give the difference from the automorphism connection on the adjoint tractor bundle. 
\item We use the method described in Section \ref{paralel} to compute the sets $S^\infty$ for (local) solutions and normal (local) solutions of the corresponding BGG operators. We also compute the full Lie algebra of infinitesimal automorphisms and the infinitesimal holonomy of the tractor connection  in $\fg$.
\item We present the examples with different amount of details depending on the parts of the theory we would like to demonstrate on them.
\end{enumerate}
We implemented the computations in {\sc Maple} and in most cases, we do not significantly edit the outputs, so the notation like names of variables and their indexing and ordering (lexicographic) usually follow the conventions in {\sc Maple}. We denote by $w_i$ real parameters and $z_i$ complex parameters and we emphasize to the reader the cases in which the notation has a deeper sense.

 \subsection{Local solutions in projective geometry}\label{solproj}
 Let us present here solution spaces for BGG operators on several interesting tractor bundles for the projective example we have chosen in Section \ref{proj-alp}. Let us emphasize that the map $\alpha: \fk\to \frak{sl}(4,\R)$ which is crucial for the computations is of the form
 \[
 \alpha(x_1,x_2,x_3)=
 \left[ \begin{smallmatrix} 0&x_1&0&0\\ 
x_1&0&0&0\\ 
x_2&- x_2& - x_1&0\\ 
x_3&x_3& -x_1& x_1 \end{smallmatrix} \right].
\]

\noindent
{\bf (1)  Standard representation.} Let us start with the standard tractor bundle, i.e., $V=\R^4$ for the standard representation $\rho$ of $\frak{sl}(4,\R)$. We get $\R^4= [V_{{3 \over 4}} \:|\: V_{-{1 \over 4}}]$ written as $(1,3)$--block vectors. 
The prolongation connection coincides with the standard tractor connection and $\Phi=\rho \circ \alpha$ takes the form
\begin{gather*}
\Phi(x_1,x_2,x_3)([w_1\:|\:w_2,w_3,w_4])=\alpha(x_1,x_2,x_3)\cdot [w_1\:|\:w_2,w_3,w_4]^t\\
=[x_1w_2\:|\:x_1w_1, w_1x_2-w_2x_2-w_3x_1, w_1x_3+w_2x_3-w_3x_1+w_4x_1].
\end{gather*}
Therefore, all solutions are normal. To compute them using the methods from Section \ref{paralel}, we need to recall that the curvature acts on $\R^4$ as $$\rho(\kappa(x_1,x_2,x_3,y_1,y_2,y_3))([w_1\:|\:w_2,w_3,w_4])=
\left[ \begin{smallmatrix} 0&0&0&0\\  
0&0&0&0\\ 
0&0&0&0\\ 
0&2(x_1y_2-x_2y_1)&0&0 \end{smallmatrix} \right]\cdot 
 \left[ \begin{smallmatrix} w_1 \\  w_2 \\ w_3 \\ w_4 \end{smallmatrix} \right],
$$
which also provides one generator of the infinitesimal holonomy of the tractor connection.
From this and the formula for $\Phi$, it is easy to compute $\mathcal{S}^0=\{ [w_1\:|\:0,w_3,w_4] \}$ and we get
$$\mathcal{S}^1=\mathcal{S}^\infty=\{[0\:|\:0,w_3,w_4]\}.$$
We also get that the infinitesimal holonomy of the tractor connection is a two--dimensional subalgebra of $\frak{sl}(4,\R)$ consisting of elements
$$\left[ \begin{smallmatrix} 0&0&0&0\\  
0&0&0&0\\ 
0&0&0&0\\ 
h_2&h_1&0&0 \end{smallmatrix} \right].$$
Altogether, there is  a two--parameter family of normal solutions for the corresponding first standard BGG operator in this case. We show in Section \ref{proj-bgg}, how this operator and these solutions look like explicitly in coordinates.

\noindent
{\bf (2) Dual representation.} Let us now consider the dual tractor bundle, i.e., $V=\R^{4*}$ for the dual representation $\rho$ of $\frak{sl}(4,\R)$. We get $\R^{4*}= [V_{-{3 \over 4}} \:|\: V_{{1 \over 4}}]$ written as $(1,3)$--block vectors. 
Let us note that there is no natural duality between standard and dual tractor bundle. Again, prolongation connection coincides with tractor connections. In this case, $\Phi=\rho \circ \alpha$ takes the form
\begin{gather*}
\Phi(x_1,x_2,x_3)([w_1\:|\:w_2,w_3,w_4])=- [w_1\:|\:w_2,w_3,w_4]\cdot \alpha(x_1,x_2,x_3)\\
=[-w_{{2}}x_{{1}}-w_{{3}}x_{{2}}-w_{{4}}x_{{3}}\:|\:-w_{{1}}x_{{1}}+w_{{3}}
x_{{2}}-w_{{4}}x_{{3}},w_{{3}}x_{{1}}+w_{{4}}x_{{1}},-w_{{4}}x_{{1}}]
\end{gather*}
and again, all solutions are normal. As in the case of standard tractor bundle, it is easy computation to get  $$\mathcal{S}^0=\mathcal{S}^\infty=\{[w_1\:|\:w_2,w_3,0]\}.$$
Let us emphasize that the corresponding three--parameter family of normal solutions provides (on a dense open subset where the value of the solution in $\mathcal{H}^0(\R^{4*})$ is non--vanishing) a Ricci--flat affine connection in the projective class for each solution, \cite{Hammerl}. We discuss this in Section \ref{proj-bgg} in detail.

\noindent
{\bf (3) Second symmetric power of standard representation.} 
Let us consider the tractor bundle for the representation $V=S^2\R^4$ and represent its elements as symmetric $(1,3)$--block matrices $$ S^2\R^4\simeq   \left[
\begin{array}{c|c} V_{{3 \over 2}} & V_{{1 \over 2}}\\ \hline * & V_{-{1 \over 2}} \end{array}  \right].$$
Here $\rho$ is given by matrix multiplication $\Phi(x_1,x_2,x_3)(W)=(\rho\circ \alpha)(x_1,x_2,x_3)(W)=\alpha(x_1,x_2,x_3)\cdot W+W\cdot \alpha(x_1,x_2,x_3)^t.$
 The prolongation connection does not coincide with the tractor connection and using the method from Section \ref{sec-prol} 
we compute the difference $\Psi(x_1,x_2,x_3)$ between the two connections as
$$
{\footnotesize
\left [ \begin {array}{c|ccc} w_{{1}}&w_{{2}}&w_{{4}}&w_{{7}}\\ 
\hline
*&w_{{3}}&w_{{5}}&w_{{8}}
\\ *&*&w_{{6}}&w_{{9}}
\\ *&*&*&w_{{10}}\end {array}
 \right] \mapsto 
 \left[ \begin {array}{c|ccc} 0&0&0&-{2\over 3}x_{{2}}w_{3}+{2\over 3}x_{1}w_
{5}\\ \hline *&0&0&0\\ *&*&0&0
\\ *&*&*&0
\end {array} \right] 
}.
$$
 This however does not change the image of $\rho\circ \kappa$ in $\frak{gl}(S^2\R^4)$ and therefore
 $$ 
\mathcal{S}^0=\{ 
{\footnotesize
\left[
\begin{array}{c|ccc}
w_1&0&w_4&w_7\\
\hline 
*&0&0&0\\
*&*&w_6& w_9\\
*&*&*& w_{10}\\
\end{array} \right]
}
\}
$$
for both the tractor and prolongation connection. Since $\Psi$ acts trivially on $\mathcal{S}^0$, we conclude that all solutions are normal and further computations show that
$$ 
\mathcal{S}^1=\mathcal{S}^\infty=\{ 
{\footnotesize
\left[
\begin{array}{c|ccc}
0&0&0&0\\
\hline 
*&0&0&0\\
*&*&w_6& w_9\\
*&*&*& w_{10}\\
\end{array} \right]
}
\}.
$$
Let us emphasize that in this case, normal solution have known geometric interpretation, \cite{CGM}.
Again, we comment on this  in Section \ref{proj-bgg}.
\

\noindent
{\bf (4) Second symmetric power of dual representation.} 
Analogously, let us consider the tractor bundle for the representation $V=S^2\R^{4*}$ and represent its elements as symmetric $(1,3)$--block  matrices
$$S^2\R^{4*}\simeq   \left[
\begin{array}{c|c} V_{-{3 \over 2}} & V_{-{1 \over 2}}\\ \hline * & V_{{1 \over 2}} \end{array}  \right].$$
Here $\rho$ and the tractor connection are defined in the usual way, too. Again, the prolongation connection does not coincide with the tractor connection and the difference $\Psi(x_1,x_2,x_3)$ between the two connections takes the form
$$
{\footnotesize
\left[ \begin {array}{c|ccc} w_{{1}}&w_{{2}}&w_{{4}}&w_{{7}}\\ 
\hline
*&w_{{3}}&w_{{5}}&w_{{8}}
\\ *&*&w_{{6}}&w_{{9}}
\\ *&*&*&w_{{10}}\end {array}
 \right] \mapsto 
 \left[ \begin {array}{c|ccc} 0&0&0&0\\ \hline *&-{4 \over 3}x_{{2
}}w_{{7}}&{2 \over 3}x_{{1}}w_{{7}}&0\\ *&*&0&0\\ *&*&*&0\end {array} \right] 
}.
$$
However, all solutions are normal and we compute that
$$
\mathcal{S}^0=\mathcal{S}^\infty=\{ {\footnotesize
\left[
\begin{array}{c|ccc} w_1&w_2&w_4&0\\ \hline *&w_3&w_5&0
\\ *&*&w_6& 0\\ *&*&*& 0\end{array} \right] }
\}.
$$
In this case, there also is a known interpretation of solutions, \cite{Hammerl}.

\noindent
{\bf (5) Second skew--symmetric power of standard representation.} 
Let us now consider the tractor bundle for representation $\wedge^2\R^4$. We represent its elements as skew--symmetric $(1,3)$--block matrices $$ \wedge^2\R^4 \simeq 
 \left[
\begin{array}{c|c} 0 & V_{{1 \over 2}}\\ \hline * & V_{-{1 \over 2}} \end{array}  \right].$$ 
Here $\rho$ and tractor connection are defined in the usual way, too. Again, the prolongation connection does not coincide with the tractor connection and the difference $\Psi(x_1,x_2,x_3)$ between the two connections takes the form
$$
{\footnotesize
\left[ \begin {array}{c|ccc} *&w_{{1}}&w_{{2}}&w_{{4}}\\ 
\hline
*&*&w_{{3}}&w_{{5}}
\\ *&*&*&w_{{6}}
\\ *&*&*&*\end {array}
 \right] \mapsto 
 \left[ \begin {array}{c|ccc} *&0&0&-2w_3x_1\\ \hline *&*&0&0\\ *&*&*&0\\ *&*&*&*\end {array} \right] 
}.
$$
In this case, the image of the curvature $R^{\rho\circ \alpha}=\rho\circ \kappa$ is different than the image of the curvature $R^{\rho\circ \alpha+\Psi}$ in $\frak{gl}(\wedge^2\R^4)$ and takes the form
$$
{\footnotesize
\left[ \begin {array}{c|ccc} *&w_{{1}}&w_{{2}}&w_{{4}}\\ 
\hline
*&*&w_{{3}}&w_{{5}}
\\ *&*&*&w_{{6}}
\\ *&*&*&*\end {array}
 \right] \mapsto 
 \left[ \begin {array}{c|ccc} *&0&0&aw_1\\ \hline *&*&0&0\\ *&*&*&bw_3\\ *&*&*&*\end {array} \right] 
},
$$
where we have $b=-a$ for the tractor connection and $b=0$ for the prolongation connection. Therefore,
$$
\mathcal{S}^0=\{  {\footnotesize \left[
\begin{array}{c|ccc} *&0& {\bf w_2}& {\bf w_4}
\\ \hline *&*& w_3& {\bf w_5}\\ *&*&*&{\bf w_6} \\ *&*&*&*\end {array} \right] }
\}
,$$
where in bold are the entries that coincide for both the tractor connection and the prolongation connection. Finally, we compute that
$$
\mathcal{S}^1=\mathcal{S}^\infty=\{  {\footnotesize \left[
\begin{array}{c|ccc} *&0& w_2& {\bf w_4}
\\ \hline *&*& w_3& {\bf w_5}\\ *&*&*&{\bf w_6} \\ *&*&*&*\end {array} \right] }
\},
$$
where the normal solutions are in bold font.

\noindent
{\bf (6) Adjoint representation.} 
Let us now consider the adjoint representation that sits as the trace--free component in the tractor bundle $\fg+\R\id \simeq V= \R^4 \otimes \R^{4*}$, where 
$V_i=\fg_i$ for the $|1|$--grading from Section \ref{proj-alp} and $V_0=\fg_0+\R\id$. There are three different connections:

\noindent
$\bullet$ The tractor connection, i.e., $\Phi=\ad \circ \alpha$ for the adjoint action on $\R^4 \otimes \R^{4*}$.

\noindent
$\bullet$ The prolongation connection, i.e., $\Phi^{prol}=\ad \circ \alpha+\Psi$, where 
$$\Psi(x_1,x_2,x_3):{\footnotesize
 \left[ \begin {array}{c|ccc} w_{{1}}&w_{{5}}&w_{{9}}&w_{{13}}
\\ \hline w_{{2}}&w_{{6}}&w_{{10}}&w_{{14}}
\\ w_{{3}}&w_{{7}}&w_{{11}}&w_{{15}}
\\ w_{{4}}&w_{{8}}&w_{{12}}&w_{{16}}\end{array}
 \right] 
 \mapsto 
 \left[ \begin {array}{c|ccc} 0&{1 \over 2}x_{2}w_{14}-x_{1}w_{15}&{1 \over 2}x_{1}w_{14}&0\\ \hline 0&0&0&0\\ 0
&0&0&0\\ 0&x_{2}w_{2}&-x_{1}w_{2}&0
\end {array} \right] 
.
}
$$

\noindent
$\bullet$ The automorphism connection for $\Phi^{aut}=\ad \circ \alpha-\iota_\kappa$ providing the infinitesimal automorphisms, where 
$$\iota_\kappa(x_1,x_2,x_3):{\footnotesize
 \left[ \begin {array}{c|ccc} w_{{1}}&w_{{5}}&w_{{9}}&w_{{13}}
\\ \hline w_{{2}}&w_{{6}}&w_{{10}}&w_{{14}}
\\ w_{{3}}&w_{{7}}&w_{{11}}&w_{{15}}
\\ w_{{4}}&w_{{8}}&w_{{12}}&w_{{16}}\end{array}
 \right] 
 \mapsto 
 \left[ \begin {array}{c|ccc} 0&0&0&0\\ \hline 0&0&0&0\\ 0
&0&0&0\\ 0&-2w_2x_2+2w_3x_1&0&0
\end {array} \right] 
.
}
$$
We compute that tractor,  prolongation and automorphism connections have different solution sets as follows, where the solutions for the prolongation connection are characterized by $A=0$, the solutions for the automorphism connection are characterized by $A=1$ and normal solutions are in bold font
$$
\mathcal{S}^\infty=\{ {\footnotesize \left[
\begin{array}{c|ccc}  {\bf w_6}& w_5&0&0
\\ \hline w_5& {\bf w_6}&0&0\\ {\bf w_3}
& {\bf w_7-w_3}& {\bf w_{11}+w_6}-w_5&0\\ {\bf w_4}& {\bf w_8+w_4}& {\bf w_{12}}-w_5
& w_5+{\bf w_6}+Aw_{11} \end {array} \right] } \}
.$$
Therefore, for $A=1$, subtracting the trace gives all infinitesimal automorphisms of this projective geometry.

\subsection{Local solutions in C--projective geometry} \label{cproj-S}
We present solution spaces for BGG operators on C-projective example we have  chosen in Section \ref{cproj-alp}.

\noindent
{\bf (1)  Standard and conjugate representations.} Let us start with the standard tractor bundle and its conjugate for the standard representation $V=\C^4$ and its conjugate representation $V=\overline{\C^4}$ of $\frak{sl}(4,\C)$. In both cases, we can write $V=[V_{{3 \over 4}} \:|\: V_{-{1 \over 4}}]$ given as $(1,3)$--block vectors. The prolongation connection coincides with the standard tractor connection in both cases. In the standard case we get
\begin{gather*}
\Phi(x_1,\dots,x_5)([z_1 \:|\: z_2,z_3,z_4])=\alpha(x_1,\dots,x_5)\cdot [z_1 \:|\: z_2,z_3,z_4]^t
=[{\scriptstyle -{3 \over 8}}x_5z_1\:|\: x_1z_1 \\
 -{\scriptstyle{1 \over 2}}x_1z_2-{\scriptstyle{3 \over 8}}x_5z_2+x_4z_4, x_2z_1+{\scriptstyle {5 \over 8}}x_5z_3+x_3z_4,x_3z_1-{\scriptstyle{1 \over 2}}x_3z_2+{\scriptstyle{1 \over 2}}x_1z_4+{\scriptstyle {1 \over 8}}x_5z_4]
\end{gather*} 
and the conjugate case $
\Phi(x_1,\dots,x_5)([z_1 \:|\: z_2,z_3,z_4])=\overline{\alpha(x_1,\dots,x_5)}\cdot [z_1 \:|\: z_2,z_3,z_4]^t
$ looks analogously. In both cases, all solutions are normal and take form $$\mathcal{S}^\infty=\{[z_1 \:|\: 2z_1,z_3,0]\}.$$
We obtain the infinitesimal holonomy of the tractor connection as the complex two--dimensional subalgebra of $\frak{sl}(4,\C)$ consisting of elements
$$\left[ \begin{smallmatrix} 0&0&0&0\\  
0&0&0&0\\ 
h_2&-\frac12 h_2&0&h_1\\ 
0&0&0&0 \end{smallmatrix} \right].$$

\noindent
{\bf (2) Dual and conjugate dual representations.} Let us now consider dual tractor bundle and its conjugate for the dual and conjugate dual representations $V=\C^{4*}$ and $V=\overline{\C^{4*}}$ of $\frak{sl}(4,\C)$. We get $V= [V_{-{3 \over 4}} \:|\: V_{{1 \over 4}}]$ written as $(1,3)$--block vectors in both cases. Let us note that there is no natural duality between standard and dual tractor bundle. Again, prolongation connection coincides with tractor connections in both cases and are given by $\Phi(x_1,\dots,x_5)([z_1\:|\:z_2,z_3,z_4])=-[z_1\:|\:z_2,z_3,z_4]\cdot \alpha(x_1,\dots,x_5)$
and $\Phi(x_1,\dots,x_5)([z_1\:|\:z_2,z_3,z_4])=-[z_1\:|\:z_2,z_3,z_4]\cdot \overline{\alpha(x_1,\dots,x_5)},$ respectively. In both cases, all solutions are normal and take form 
$$\mathcal{S}^\infty=\{[z_1\:|\: z_2,0,z_4]\}$$
Let us note that the geometric interpretation of the solutions is described in \cite{CEMN}.

\noindent
{\bf (3)  Adjoint representation.}  
Let us consider the adjoint representation $\frak{sl}(4,\C)$, where $V_i=\fg_i$ for the $|1|$--grading from Section \ref{cproj-alp}. Similarly as in the projective case, there are three different connections:

\noindent
$\bullet$ The tractor connection, i.e., $\Phi=\ad \circ \alpha$ for the adjoint action on $\frak{sl}(4,\C)$.

\noindent
$\bullet$ The prolongation connection, i.e., $\Phi^{prol}=\ad \circ \alpha+\Psi$, where $\Psi(x_1, \dots, x_5)$
$$
{\footnotesize
\left[ \begin {array}{c|ccc} * &z_{{15}}&z_{{14}}&z_{{13}}
\\ \hline z_{{1}}&z_{{4}}&z_{{7}}&z_{{8}}
\\ z_{{2}}&z_{{9}}&z_{{5}}&z_{{10}}
\\ z_{{3}}&z_{{11}}&z_{{12}}&z_{{6}}
\end {array}
 \right] 
 \mapsto 
 \left[ \begin {array}{c|ccc} * &{3 \over 8}  x_3 z_{12}&0&-{3 \over 4} x_3 z_7-{3 \over 8}x_1 z_{12} \\  \hline 0&0&0&0\\ 0&-{3 \over 4}x_2z_2&0&-{3 \over 4}x_1z_3 \\ 0&0&0&0\end {array} \right] 
}.
$$

\noindent
$\bullet$ The automorphism connection for $\Phi^{aut}=\ad \circ \alpha-\iota_\kappa$ providing the infinitesimal automorphisms, where 
$$\iota_\kappa(x_1,x_2,x_3,x_4,x_5):{\footnotesize
\left[ \begin {array}{c|ccc} * &z_{{15}}&z_{{14}}&z_{{13}}
\\ \hline z_{{1}}&z_{{4}}&z_{{7}}&z_{{8}}
\\ z_{{2}}&z_{{9}}&z_{{5}}&z_{{10}}
\\ z_{{3}}&z_{{11}}&z_{{12}}&z_{{6}}
\end {array}
 \right] 
 \mapsto 
 \left[ \begin {array}{c|ccc} 0&0&0&0\\ \hline 0&0&0&0\\ 0
&0&0&\frac32(x_3z_1-x_1z_3)\\ 0&0&0&0
\end {array} \right] 
.
}
$$
We compute that the tractor and prolongation connections have the same (complex) six--dimensional solution set
$$
\mathcal{S}^\infty=\{
{\footnotesize \left[
\begin{array}{c|ccc} * & -{1 \over 4}z_{1}-2z_5 & 0 & z_{13} \\
\hline  z_1 & -{1 \over 2}z_{1}-3z_5 & 0 & 2z_{13}     \\
z_2 & z_9 & z_5 & z_{10} \\
0 & 0 & 0 & z_5
\end {array} \right] } \}
$$
and automorphism connection has the following (complex) seven--dimensional solution set
$$
\mathcal{S}^\infty=\{
{\footnotesize \left[
\begin{array}{c|ccc} 
* & 0 & 0 & 0 \\ \hline  
z_1 & z_4 & 0 & z_{5}     \\
z_2 & z_6 & -\frac53z_4-\frac56z_1 & z_{7} \\
z_{3} & -\frac12z_3 & 0 & -\frac13z_4+\frac13z_1
\end {array} \right] } \}.
$$

\noindent
{\bf (4) Representation on Hermitian matrices.} 
Let us consider the representation of $\frak{sl}(4,\C)$ on the space of $4\times 4$--Hermitian matrices that forms an irreducible subrepresentation of $\C^{4*}\otimes \overline{\C^{4*}}$ consisting of elements $S+{\rm i}A$, where $S$ is symmetric and $A$ is skew--symmetric. The tractor connection coincides with the prolongation connection and thus all solutions are normal. If we view the Hermitian matrices as $(1,3)$--block matrices, the grading of the tractor bundle and the space of solutions are as follows
$$
V \simeq
{\footnotesize \left[
\begin{array}{c|ccc} 
V_{-{3 \over 2}} & V_{-{1 \over 2}} \\ \hline  
* & V_{{1 \over 2}}
\end {array} \right] },\ \ \
\mathcal{S}^\infty=\{
{\footnotesize \left[ \begin {array}{c|ccc} w_{{7}}&{\rm i}w_{{1}}+w_{{8}}&0&{\rm i}w_{{4}}+w_{{
13}}\\ \hline *&w_{{9}}&0&{\rm i}w_{{5}}+w_{{14}}
\\ *&*&0&0\\ 
*&*&*&w_{{16}}\end{array} \right] 
 \} }.
$$

\subsection{Local solutions in $(2,3,5)$ distributions}
Let us present solution spaces for BGG operators on example from Section \ref{235-alp}.

\noindent
{\bf (1)  Standard and dual representations.} \label{235-S}
Let us consider the standard representation on $\R^7$, where we have $\R^7=[V_2 \:|\: V_1\:|\: V_0 \:|\: V_{-1}\:|\: V_{-2}]$ written as $(1,2,1,2,1)$--block vectors.
 There is a natural duality between standard and dual representation. Moreover, general theory states that the prolongation connection always coincides with the tractor connection and solutions are normal, \cite{SW2}. We get
$$\mathcal{S}^\infty=\{[{\scriptstyle {4 \over 9}} w_7\:|\: 0,0\:|\:0\:|\: 0,0\:|\: w_7]\}$$ 
and compute that the infinitesimal holonomy of the tractor connection is the following representation of $\frak{su}(2,1)$ on $\R^7$
$$ \left[ \begin {smallmatrix} 
0&{4 \over 9}  h_7&{4 \over 9}  h_6&-{4 \over 9}  h_8&{4 \over 9}  h_5&{4 \over 9}  h_4&0\\
  h_4& h_1& h_2&-{4 \sqrt {2}\over 9}  h_6&{2 \sqrt {2} \over 9}  h_8&0&-{4 \over 9}  h_4\\ 
 h_5& h_3&- h_1& {4\sqrt {2}  \over 9}   h_7 &0&-{2\sqrt{2} \over 9} h_8&-{4 \over 9}  h_5\\
  h_8&-\sqrt {2}h_5 &\sqrt {2} h_4&0&{4 \sqrt {2} \over 9}  h_7 &-{4 \sqrt {2} \over 9}  h_6&-{4 \over 9} h_8\\ 
 h_6&-{\sqrt {2} \over 2}  h_8&0&\sqrt {2} h_4& h_1&- h_2&-{4 \over 9}  h_6\\  
h_7&0&\sqrt {2} h_8&-\sqrt {2}h_5 &- h_3&- h_1&-{4 \over 9}  h_7\\
 0&- h_7&- h_6& h_8&- h_5&- h_4&0\end {smallmatrix} \right].$$
%
Let us note that solutions correspond to almost Einstein scales of the induced conformal structure, see \cite{SW} for details. In particular, in \cite[Theorem D\_]{SW} the authors discuss the holonomy reduction in question.
\begin{rem}
With respect to the parameter representing the ratio between the scalar curvatures of the ball and hyperbolic space, there is only one other case when there are solutions of the first BGG operator on $\R^7$ which is flat, i.e., $\mathcal{S}^\infty=\R^7$.
\end{rem}
\noindent
{\bf (2)  Symmetric powers.} Let us consider the symmetric and skew--symmetric powers of the standard representation $S:=S^2 \R^7$ and $W:=\wedge^2 \R^7=\fg_2(2)\oplus\R^7$, and represent its elements as symmetric and skew--symmetric $(1,2,1,2,1)$--block matrices, respectively
$$
{\footnotesize
S \simeq 
\left[
\begin{array}{c|c|c|c|c}  
V_{4} & V_{3} & V_{2} & V_1 & V_0\\ \hline
* & V_{2} & V_{1} & V_0 & V_{-1} \\ \hline
* & * & V_{0} & V_{-1} & V_{-2}\\ \hline
* & * & * & V_{-2} & V_{-3} \\ \hline
* & * & * & * & V_{-4}
\end{array}  \right],  \ \ \ 
W \simeq 
\left[
\begin{array}{c|c|c|c|c}  
0& V_{3} & V_{2} & V_1 & V_0 \\ \hline
* & V_{2} & V_1 & V_0& V_{-1} \\ \hline
* & * & 0 & V_{-1} & V_{-2}\\ \hline
* & * & * & V_{-2} & V_{-3} \\ \hline
* & * & * & * & 0
\end{array}  \right].
}
 $$
In the symmetric case, the prolongation connection does not equal to the tractor connection, however, all solutions are normal. Therefore we will not explictly write down the difference of the connections. We get that $\mathcal{S}_S^\infty$ consists of 
$$
{\footnotesize \left[
\begin{array}{c|cc|c|cc|c} w_1&0&0&0&0&0&w_{22}
\\ \hline *&0&0&0&0&-{9 \over 4}w_1+w_{22}&0
\\ *&*&0&0&-{9 \over 4} w_1+w_{22}&0&0
\\\hline *&*&*&{9 \over 4}w_1-w_{22}&0&0&0
\\ \hline *&*&*&*&0&0&0
\\ *&*&*&*&*&0&0
\\ \hline *&*&*&*&*&*&{\frac {81}{16}}w_1
\end{array} \right] },
$$
where the case $w_{1}=0$ defines the conformal metric  sitting in trivial subrepresentation of $S$, \cite{CK}, and $w_{22}=0$ sits in its irreducible complement.

In the skew--symmetric case, prolongation connection does not equal to tractor connection, too. 
Their difference $\Psi(x_i)(w_i)$ is given by the matrix that has everywhere zeros except positions
{\footnotesize 
\begin{align*}
[2,5]=&-{\frac {8\sqrt {2}}{27}}x_{{1}}w_{{10}}-{\frac {8\sqrt {2}}{9}}x_{{2}}w_{{14}}-{\frac {8}{27}}x_{{1}}w_{{17}}+{\frac {8}{9}}x_{{2}}w_{{18}}+{\frac {8}{9}}x_{{4}}w_{{20}}-{8\over 3}x_{{5}}w_{{21}}
\\
[2,6]=&{\frac {8 \sqrt {2}}{27}}x_{{2}}w_{{10}}+{\frac {8\sqrt {2}}{27}}x_{{1}}w_{{14}}+{\frac {8}{27}}x_{{2}}w_{{17}}-{\frac {8}{27}}x_{{1}} w_{{18}}+{\frac {8}{9}}x_{{5}}w_{{20}}-{\frac {8}{9}}x_{{4}}w_{{21
}}\\ 
[3,6]=&{\frac {8 \sqrt {2}}{9}}x_{{1}}w_{{10}}+{\frac {8 \sqrt {2}}{27}}x_{{2}}w_{{14}}+{\frac {8}{9}}x_{{1}}w_{{17}}-{\frac {8}{27}}x_{{2}}w_{{18}}-{8 \over 3}x_{{4}}w_{{20}}+{
\frac {8}{9}}x_{{5}}w_{{21}}
\end{align*}
}
and $[3,5]=-[2,6]$ and corresponding skew--symmetric positions.
All solutions of $\mathcal{S}^\infty_W$ take form
$$
 {\footnotesize \left[
\begin{array}{c|cc|c|cc|c}
*&{2\sqrt {2} \over 9}w_{10}&{2\sqrt {2} \over 9}w_{14}&w_4&{\sqrt {2} \over 2}w_5&-{\sqrt {2} \over 2}w_{6}&0\\ \hline
*&*& w_3&w_{5}&w_{8}&0&{\sqrt {2} \over 2} w_{10}\\ 
*&*&*& w_{6}&0&-w_{8}&-{\sqrt {2} \over 2} w_{14}\\ \hline 
*&*&*&*&w_{10}&w_{14}&{9 \over 4}w_{4}\\ \hline
*&*&*&*&*&-{9 \over 4}w_{3}&{\frac {9\sqrt {2}}{8}} w_5\\ 
*&*&*&*&*&*&-{\frac {9}{8}}\sqrt {2} w_6\\ \hline
*&*&*&*&*&*&*
\end{array} \right] }
$$
and normal solutions are those satisfying $w_3=-\sqrt{2}w_4$ and the rest vanishes. We know from the point (1) that the normal solutions sit in $\R^7$ and it is not hard to observe (using the conformal metric) that the other solutions are just reparametrization of $\alpha(\fk).$ Let us remark that this allows us to see  in hindsight that the difference $\Psi(x_i)(w_i)$ is exactly the inclusion $-\iota_{\kappa}$ into the curvature.

\noindent
{\bf (3)  Metrizability and $S^2(\ad)$.} Let us finally consider an example of a more complicated representation $V=S^2(\ad)=C\oplus  S$, where $C$ is the Cartan component of $S^2(\ad)$.  We already computed the solutions in $S$ in the point (2) and the solutions in $C$ are related to the problem of  submetrizability considered in \cite{metrics}.

The dimension of the corresponding tractor bundle $S^2(\ad)$ is $105$ and its elements can be represented as symmetric $(2,1,2,4,2,1,2)$--block matrices such that on the block position $[i,j]$ sits $V_{i+j-8}$.
This however does not provide a reasonable way to present the solutions. Instead, we analyse the solutions using the standard methods (highest weights and root lattices) from the structure theory of representations of semisimple Lie algebras. Firstly, there is only a two--dimensional family of normal solutions, which we know exhausts all solutions in $S$, and there is a $20$--dimensional family of additional solutions in $C$, which are encoded as representations of $\frak{sl}(2,\R)\oplus \frak{so}(3)$ together with the projections to $V_{-6}=S^2\R^2$ as follows.
\noindent \\ $\bullet$
A representation $S^4\R^2$ (realized as homogeneous polynomials of degree $4$ in variables $p_1,p_2$) of $\frak{sl}(2,\R)$ together with the projection 
$$v_1p_1^4+v_2p_1^3p_2+v_3p_1^2p_2^2+v_4p_1p_2^3+v_5p_2^4\mapsto \left[\begin{smallmatrix}  
6v_1-2v_3+6v_5 & -3v_2+3v_4\\
* &4v_3
\end{smallmatrix}  \right].$$
\noindent \\ $\bullet$ A representation $\frak{sl}(2,\R)\otimes \frak{so}(3)=(H,X,Y)\otimes (A,B,C)$ of $\frak{sl}(2,\R)\oplus \frak{so}(3)$ together with the projection
\begin{gather*}
v_6H\otimes A+v_7H\otimes B+v_8H\otimes C+v_9X\otimes A+v_{10}X\otimes B+v_{11}X\otimes C\\
+v_{12}Y\otimes A+v_{13}Y\otimes B+v_{14}Y\otimes C
 \mapsto \left[\begin{smallmatrix}  
2v_{10}+2v_{13}& 2v_7+v_9+v_{12} \\
* & 4v_6
\end{smallmatrix}  \right].\end{gather*}
\noindent \\ $\bullet$ A representation $S^2\R^3$ of $\frak{so}(3)$ (realized as homogeneous polynomials of degree $2$ in variables $p_1,p_2,p_3$) together with the projection
\begin{gather*}
(v_{15}-v_{18})p_1^2+(v_{15}+v_{18}-v_{19})p_2^2+(v_{15}+v_{19})p_3^2+v_{20}p_1p_2+v_{21}p_1p_3+v_{22}p_2p_3 \\
\mapsto \left[\begin{smallmatrix}  
v_{15}+2v_{18}-2v_{19} & -v_{22} \\
* &v_{15}+2v_{19}
\end{smallmatrix}  \right].\end{gather*}
\noindent \\ $\bullet$ The trivial representations $v_{16},v_{17}$ in $S$ having trivial projection to $V_{-6}$.

 \subsection{Local solutions in CR geometry and theory of systems of PDEs}\label{CR-S}\label{lagr-S}
Let us now turn to solution spaces for BGG operators on tractor bundles for the CR example and LC example from Section \ref{CR-alp}. We use the same methods as in the previous case, however, we do not give that much details because the calculations and results are much longer. Let us emphasize that additional differences appear here due to differences in the representation theory of the two different real forms.

\noindent
{\bf (1)  Standard, dual and conjugate representations.}  The standard representation of $\frak{su}(1,3)$ on $V=\C^4$ is complex and admits an invariant Hermitian form that provides isomorphism $\bar{V}\cong V^*$. We write $\C^{4}= [V_{1} \:|\: V_{0} \:|\: V_{-1}]$ and $\C^{4*}= [V_{-1} \:|\: V_{0} \:|\: V_{1}]$ as $(1,2,1)$--block vectors. Further, we consider the standard representation of $\frak{sl}(4,\R)$ on $\R^4$ and the dual representation on $\R^{4*}$. 
We write $\R^{4}= [V_{1} \:|\: V_{0} \:|\: V_{-1}]$ and  $\R^{4*}= [V_{-1} \:|\: V_{0} \:|\: V_{1}]$ as $(1,2,1)$--block vectors. 

In all cases, the prolongation connections coincide with the tractor connections. We observe from the formulas \eqref{CR-cur} for $\kappa_{CR}$ and \eqref{CR-cur2} for $\kappa_{LC}$ that $\mathcal{S}^0_{\C^4}=\{[0\:|\:0,0\:|\:z]\}$ and $\mathcal{S}^0_{\C^{4*}}=\{[z\:|\:0,0\:|\:0]\}$ for $z\in \mathbb{C}$, and $\mathcal{S}^0_{\R^4}=\{[w\:|\: 0, 0 \:|\:0]\}$  and $\mathcal{S}^0_{\R^{4*}}=\{[0 \:|\: 0, 0 \:|\: w]\}$ for $w\in \mathbb{R}$. Then it follows from the formulas \eqref{CR-rozsireni} for $\alpha_{CR}$ and \eqref{CR-rozsireni2} for $\alpha_{LC}$ that $\mathcal{S}^1=\mathcal{S}^\infty=\{0\}$ in all cases and there are no non--trivial solutions. In particular, the infinitesimal holonomy of the tractor connections is the full $\frak{su}(1,3)$ and $\frak{sl}(4,\R)$, respectively. 
More comments on the interpretation of solutions in the CR case can be found in \cite{Fef}.

\noindent
{\bf (2)  Second powers.}  Let us consider the tractor bundles for the symmetric representations $V_{CR}=S^2\C^4$ and $V_{LC}=S^2\R^4$ and represent elements as complex and real symmetric $(1,2,1)$--block matrices, respectively, as follows
$$
V \simeq\left[
\begin{array}{c|c|c}  V_{2} & V_1 &V_0\\ \hline * & V_{0} & V_{-1} \\ \hline  * & * & V_{-2} \end{array}  \right].
$$
It turns out that in both cases, the prolongation connection does not coincide with the tractor connection. In fact, there are no normal solutions while the spaces of all solutions take form
$$S^\infty_{CR}=\{ {\footnotesize \left[
\begin{array}{c|cc|c}
z_1 & z_2&z_4 &\frac {24}{11} z_1 \\ \hline 
*&0&z_5& 8z_2\\ 
*&*&0&-8z_4\\ \hline
*&*&*& -{\frac {576}{11} z_1}\end {array}
 \right] } \},
 \ \ \ \ 
\mathcal{S}^\infty_{LC}=\{ {\footnotesize \left[
\begin{array}{c|cc|c} w_{1}&{w_2}&{w_4}&-{\frac {12}{
11}}{w_1}\\ \hline *&0&{w_5}&-4{w_2}
\\ *&*&0&4{w_4}
\\ \hline *&*&*&-{\frac {144}{11}}{ w_1}\end {array} \right] } \}.
$$
Finally, let us note that there are no non--trivial solutions in the cases $V_{CR}=\wedge^2\C^4$ and $V_{LC}=\wedge^2\R^4$, respectively.

\noindent
{\bf (3)  Adjoint representation.}  
Let us now briefly mention the adjoint tractor bundles $V=\fg$, where $V_i=\fg_i$ for the contact gradings from Section \ref{CR-alp}.
The prolongation connections do not coincide with the tractor connections in both cases, but they coincide with the automorphism connections. There are no normal solutions. Spaces of all solutions recover (up to parametrization) the images $\mathcal{S}^\infty_{CR}=\alpha_{CR}(\fk_{CR})$ and $\mathcal{S}^\infty_{LC}=\alpha_{LC}(\fk_{LC})$, i.e., there are no additional infinitesimal automorphisms.

 \noindent
{\bf (4) Metrizability, $\otimes^2 \wedge^2 \C^4$ and $\otimes^2 \wedge^2 \R^4$.} 
Let us finally consider examples for more complicated representations, namely $V_{CR}=\otimes^2 \wedge^2 \C^4=S^2 \wedge^2 \C^4\oplus  \C^4\otimes \C^{4*}$ and $V_{LC}=\otimes^2 \wedge^2 \R^4=S^2 \wedge^2 \R^4\oplus  \R^4\otimes \R^{4*}$, which are different from the viewpoint of representation theory. For $V_{CR}$, both summands are complexifications of real representations of $\frak{su}(1,3)$, while for $V_{LC}$, both summands are real representations of $\frak{sl}(4,\R)$. In both cases, the second summand decomposes to (the complexification of) the adjoint representation and trivial representation whose solutions we know, and the first summand is (the complexification of) the space that is related to the problem of submetrizability considered in \cite{metrics}.

Complex dimension of $V_{CR}=\otimes^2 \wedge^2 \C^4$ and real dimension of $V=\otimes^2 \wedge^2 \R^4$ both equal to $36$ and we consider complex coordinates $z_i$, $i=1\dots 36$ such that 
\begin{align*}
V_{-2}&= \langle z_{29}, z_{30}, z_{35}, z_{36} \rangle,\\
V_{-1}&=\langle z_{17}, z_{18}, z_{23}, z_{24}, z_{27}, z_{28}, z_{33}, z_{34} \rangle,\\
V_0&= \langle z_5, z_6, z_{11},z_{12}, z_{15}, z_{16}, z_{21},z_{22}, z_{25}, z_{26}, z_{31}, z_{32} \rangle,\\
V_1&= \langle z_3, z_4, z_9, z_{10}, z_{13}, z_{14}, z_{19}, z_{20}\rangle,\\
V_{2}&= \langle z_{1}, z_{2}, z_{7}, z_{8} \rangle.
\end{align*}
in the CR case and real coordinates $w_i$, $i=1\dots 36$ with the same components in the LC case.

In both cases, the prolongation connection does not coincide with the tractor connection. In the CR case we get the following complex nine--parameter family of solutions 
\begin{gather*}
\mathcal{S}^{\infty}_{CR}=[0,z_2,-z_{13},z_4,0,z_6,z_7,0,z_9,z_{10},z_{11},0,z_{13},-z_9,0,{\scriptstyle{216\over 31}}z_2-z_{21}+ z_6-\\ 
z_{11}+{\scriptstyle{216 \over 31}}z_7,-24z_{13},-{\scriptstyle{24 \over 5}}z_9,-z_4,-z_{10},z_{21},0,{\scriptstyle{24 \over 5}}z_4,-24z_{10},0,-z_6,24z_{13},\\
-{\scriptstyle{24 \over 5}}z_4,0,
-{\scriptstyle{20160 \over 403}}z_2-{\scriptstyle{2304 \over 403}}z_7,-z_{11},0,{\scriptstyle{24 \over 5}}z_9,24z_{10},-{\scriptstyle{20160 \over 403}}z_2-{\scriptstyle{2304 \over 403}}z_7,0],
\end{gather*}
where the normal ones satisfy that $z_{6}=-z_{11}=z_{21}$ and the rest vanishes. In the LC case, we get the following real nine--parameter family of solutions 
\begin{gather*}
\mathcal{S}^{\infty}_{LC}=[
w_1, 0, w_3, w_4, w_5, 0, 0, w_8,w_9,w_{10},0,
w_{22}-w_5+w_{15}-{\scriptstyle {108 \over 31}}w_1+{\scriptstyle{108\over 31}}w_{8}, \\
w_{10}, -w_4, w_{15}, 0, {\scriptstyle{12 \over 5}}w_{10}, 12w_4, w_{9}, -w_3,
0, w_{22}, 12w_9, {\scriptstyle{12 \over 5}}w_3, w_5, 0,  {\scriptstyle{12 \over 5}}w_3,\\
12w_4, {\scriptstyle{5040 \over 403}}w_1-{\scriptstyle{576 \over 403}}w_{8}, 
 0, 0, w_{22}-w_5+w_{15}-{\scriptstyle {108 \over 31}}w_1+{\scriptstyle{108\over 31}}w_{8}, 
 -12w_9,\\
 -{\scriptstyle{12 \over 5}}w_{10}, 0, {\scriptstyle{5040 \over 403}}w_1-{\scriptstyle{576 \over 403}}w_{8}
\end{gather*}
where the normal ones satisfy $w_5=w_{15}=w_{22}$ and the rest vanishes.

Thus (in the real form) we have dimension $9$ of solution space and we know that a subspace of dimension $7$ sits in the adjoint representation and the normal solutions are clearly in trivial representation in $\C^4\otimes \C^{4*}$ and $\R^4\otimes \R^{4*}$, respectively. Consequently, the additional dimension of solutions provides the submetrics and it is not hard to observe that it is a trivial representation of $\fk_{CR}$ and $\fk_{LC}$, respectively. So we can directly project the solutions to $V_{-2}$ and decompose both 
\begin{gather*}
[0,-{\scriptstyle{20160 \over 403}}z_2-{\scriptstyle{2304 \over 403}}z_7,-{\scriptstyle{20160 \over 403}}z_2-{\scriptstyle{2304 \over 403}}z_7,0],\\ [{\scriptstyle{5040 \over 403}}w_1-{\scriptstyle{576 \over 403}}w_{8}, 0,  0, {\scriptstyle -{576 \over 403}}w_1+{\scriptstyle {5040\over 403}}w_{8}]
\end{gather*} 
into $\fg_{-2}$ and the space of submetrics. In the CR case, the submetric should be Hermitian and from action of $\fg_0$ is clear that solutions in $\fg_{-2}$ are of the form $[0,a,a,0]$ and submetrics are of the form $[0,-b{\rm i} ,b{\rm i} ,0]$ for $a,b\in \R$. In the Lagrangean case, we get that the solutions in $\fg_{-2}$ are of the form $[a,0,0,a]$ and submetrics are of the form $[b,0,0,-b]$ for $a,b\in \R$ for the same reasons.

\subsection{Local solutions in the theory of systems of ODEs}\label{path-S}
Let us present solution spaces for BGG operators on example from Section \ref{path-alp}.

\noindent
{\bf (1)  Standard and dual representations.} 
Let us consider the standard representation $\rho$ of $\frak{sl}(4,\R)$ on $V=\R^4$ and the dual representation $\rho^*$ on $V=\R^{4*}$. We get $\R^4=[V_{{5 \over 4}}\: | \: V_{{1 \over 4}} \: | \: V_{-{3 \over 4}}]$ and 
$\R^{4*}=[V_{-{5 \over 4}}\: | \: V_{-{1 \over 4}} \: | \: V_{{3 \over 4}}]$ written as $(1,1,2)$--block vectors. Prolongation connections coincide with tractor connections in both cases, and we have
\begin{gather*}
\Phi(x_1,\dots,x_6)([w_1 \:|\: w_2 \:|\:w_3,w_4])=\alpha(x_1,\dots,x_6)\cdot [w_1\:|\:w_2 \:|\:w_3,w_4]^t\\
=[0 \:|\: x_3w_2 \:|\: x_1w_1+x_4w_2, x_2w_1+x_5w_2+x_6w_3 ],\\
\Phi^*(x_1,\dots,x_6)([w_1, w_2,w_3,w_4])=-[w_1\:|\:w_2 \:|\:w_3,w_4]\cdot \alpha(x_1,\dots,x_6) \\
=[-x_1w_3-x_2w_4-x_3w_2, -x_4w_3-x_5w_4, -x_6w_4, 0].
\end{gather*}
Solutions are normal in both cases and we get
$$\mathcal{S}^\infty_{\R^4}=\{[0\:|\:0\:|\:0,w_4]\}, \ \ \ \ \mathcal{S}^\infty_{\R^{4*}}=\{[w_1\:|\:w_2\:|\:w_3,0]\}.$$
The infinitesimal holonomy of the tractor connection is a three--dimensional subalgebra of $\frak{sl}(4,\R)$ consisting of elements
$$\left[ \begin{smallmatrix} 0&0&0&0\\  
0&0&0&0\\ 
0&0&0&0\\ 
h_3&h_2&h_1&0 \end{smallmatrix} \right].$$

\noindent
{\bf (2)  Second powers.}  Let us firstly consider the tractor bundle for symmetric and skew--symmetric representations $V=S^2\R^4$ and $V=\wedge^2\R^{4}$ and represent their elements as symmetric $(1,1,2)$--block matrices and skew--symmetric $(1,1,2)$--block matrices, respectively, as follows 
$$
S^2\R^4\simeq 
\left[
\begin{array}{c|c|c}  V_{{5 \over 2}} & V_{{3 \over 2}} & V_{{1 \over 2}}\\ \hline * & V_{{1 \over 2}} & V_{-{1 \over 2}} \\ \hline  * & * & V_{-{3 \over 2}} \end{array}  \right], \ \ \ \ 
\wedge^2\R^4 \simeq 
\left[
\begin{array}{c|c|c}  0 & V_{{3 \over 2}} & V_{{1 \over 2}}\\ \hline * &0 & V_{-{1 \over 2}} \\ \hline  * & * & V_{-{3 \over 2}} \end{array}  \right].
$$
In the skew--symmetric case, the prolongation connection coincides with the tractor connection.
In the symmetric case, they do not coincide and their difference $\Psi(x_1, \dots, x_6)$ takes the form
$$
{\footnotesize
\left[ \begin {array}{c|c|cc} 
w_1&w_{{2}}&w_{{4}}&w_{{7}}\\ 
\hline
*&w_3&w_{{5}}&w_{{8}}
\\ \hline *&*&w_6&w_{{9}}
\\ *&*&*&w_{10}\end {array}
 \right] \mapsto 
 \left[ \begin {array}{c|c|cc} 0&0&0&{1 \over 4}x_4w_6\\ \hline *&0&0&-{1 \over 4}x_1w_6\\ \hline *&*&0&0\\ *&*&*&0\end {array} \right] 
}.
$$
However, all solutions are normal in both cases and take form
$$
S^\infty_{S^2\R^4}=\{  {\footnotesize \left[
\begin{array}{c|c|cc} 0&0&0&0\\ \hline *&0&0&0
\\ \hline *&*&0&0\\ *&*&*& w_{10}
\end {array}  \right] }
 \},\ \ \ 
 S^\infty_{\wedge^2\R^{4}}=\{  {\footnotesize \left[
\begin{array}{c|c|cc} *&0&0&w_{4}\\ \hline *&*&0&
w_{5}\\ \hline *&*&*&w_{6}\\ 
*&*&*&*
\end {array} \right] }
 \}.
$$

Let us now swap to duals and consider symmetric and skew--symmetric representations $V=S^2\R^{4*}$ and $V=\wedge^2\R^{4*}$. We represent their elements as symmetric $(1,1,2)$--block matrices and skew--symmetric $(1,1,2)$--block matrices, respectively, as follows 
$$
S^2\R^{4*}\simeq 
\left[
\begin{array}{c|c|c}  V_{-{5 \over 2}} & V_{-{3 \over 2}} & V_{-{1 \over 2}}\\ \hline * & V_{-{1 \over 2}} & V_{{1 \over 2}} \\ \hline  * & * & V_{{3 \over 2}} \end{array}  \right], \ \ \ \ 
\wedge^2\R^4 \simeq 
\left[
\begin{array}{c|c|c}  0 & V_{-{3 \over 2}} & V_{-{1 \over 2}}\\ \hline * &0 & V_{{1 \over 2}} \\ \hline  * & * & V_{{3 \over 2}} \end{array}  \right].
$$
In the skew--symmetric case, the prolongation connection again coincides with the tractor connection.
In the symmetric case, they do not coincide and their difference $\Psi(x_1, \dots, x_6)$ takes the form
$$
{\footnotesize
\left[ \begin {array}{c|c|cc} 
w_1&w_{{2}}&w_{{4}}&w_{{7}}\\ 
\hline
*&w_3&w_{{5}}&w_{{8}}
\\ \hline *&*&w_6&w_{{9}}
\\ *&*&*&w_{10}\end {array}
 \right] \mapsto 
 \left[ \begin {array}{c|c|cc} 0&0&0&0\\ \hline *&0&0&0\\ \hline *&*&w_7x_4-w_8x_1&0\\ *&*&*&0\end {array} \right].
}
$$
All solutions are as follows, where we write normal solutions in bold font
$$
S^\infty_{S^2\R^{4*}}=\{  {\footnotesize \left[
\begin{array}{c|c|cc} {\bf w_1}&{\bf w_{{2}}}&{\bf w_{{4}}}& w_{{7}}\\ 
\hline
*&{\bf w_3}& {\bf w_{{5}}}& w_{{8}}
\\ \hline *&*&{\bf w_6}&0
\\ *&*&*&0
\end {array}  \right] }
 \}
,\ \ \ 
 S^\infty_{\wedge^2\R^{4*}}=\{  {\footnotesize \left[
\begin{array}{c|c|cc} *&{\bf w_1}&{\bf w_2}&0\\ \hline *&*&{\bf w_3}&
0\\ \hline *&*&*&0\\ 
*&*&*&*
\end {array} \right] }
 \}.
$$

\noindent
{\bf (3)  Adjoint representation.}  
Let us consider the adjoint representation  sitting as a trace--free component  in $\fg+\R\id \simeq V = \R^4 \otimes \R^{4*}$, where $V_i=\fg_i$ for the $|2|$--grading from Section \ref{path-alp}. 
The prolongation connection does not coincide with the tractor connection and their difference $\Psi(x_1, \dots, x_6)$ takes the form
$$
{\footnotesize
\left[ \begin {array}{c|c|cc} w_{{1}}&w_{{5}}&w_{{9}}&w_{{13}}
\\ \hline w_{{2}}&w_{{6}}&w_{{10}}&w_{{14}}
\\ \hline w_{{3}}&w_{{7}}&w_{{11}}&w_{{15}}
\\ w_{{4}}&w_{{8}}&w_{{12}}&w_{{16}}
\end {array}
 \right] 
 \mapsto 
 \left[ \begin {array}{c|c|cc} 0&0&0&0\\ \hline 0&0&0&0\\ \hline 0&0&0&0\\ 0&0&x_1w_7-x_4w_3&0\end {array} \right] 
}
$$
which we can observe coincides with insertion into the curvature $-\iota_\kappa$.
We get that all solutions take form
$$\mathcal{S}^\infty=\{
{\footnotesize \left[
\begin{array}{c|c|cc} 3w_{11}-w_{16}-w_{6}&w_{5}&0
&0\\ \hline w_{2}&w_{6}&0&0\\ \hline w_{3}&w_{7}&w_{11}&0\\  w_{4}&w_{8}&w_{12}&w_{16}\end {array} \right] } \}
$$
and normal are those with $w_6=w_{11}=w_{16}$ and $w_2=w_3=w_5=w_7=0.$ The solutions with $w_6=w_{11}=w_{16}$ and rest vanishing correspond to the trace and if we remove it, we observe that we recovered the infinitesimal automorphisms that we excluded from $\fk$.

\section{Coordinate description of solutions of BGG operators}\label{solcoordcap}

\subsection{Exponential coordinates of other kind} \label{exponential-coord}

From the discussion in Sections \ref{paralel} and \ref{sec-prol} it follows that global solutions of first BGG operators $D$ are given by elements of $\mathcal{S}^\infty_K$ for the prolongation connection given by $$\Phi=\rho\circ \alpha+\Psi.$$ 
There are other kinds of exponential coordinates than those used in Proposition \ref{prop1.1} in formula \eqref{parsec} available. These coordinates take into account the structure of the Lie algebra $\fk$ and the Lie group $K$ and allow us to get a global covering. Moreover, these suggest a particular choice for $\fc$ such that $\Ad^{-1}_k \fc$ is complementary to $\fh$ for coordinates centered at $kH\in K/H$.

For our purposes, it is sufficient to restrict our considerations to the situation that there is a decomposition
$$K=S\exp(\fr)\exp(\fn),$$
 where $S$ is semisimple, $\fr$ consists of elements of the radical of $K$ acting reductively in the representation, and $\exp(\fn)$ consists of elements of the radical of $K$ acting nilpotently in the representation. Thus $\exp(\fr)\exp(\fn)$ is the Jordan decomposition of the radical of $K$. Without loss of generality, we can choose $\fc$ that decomposes into $\fs\oplus \fr \oplus \fn$ in such a way that $$\exp(\Ad_{k}^{-1}X)=\exp(X_\fs)\exp(X_\fr)\exp(X_\fn)$$ for $X \in \fc$. In such coordinates, we get the following expression for the solutions
\begin{align*}
&s(k\exp(X_\fs)\exp(X_\fr)\exp(X_\fn)H)=
\\ 
&\pi_0(\exp(-\Phi(X_\fn))\exp(-\Phi(X_\fr))\exp(-\Phi(X_\fs))s(k)).
\end{align*}
This provides the following simplifications in the computation of the exponentials.
\begin{enumerate}
\item The map $\Phi$ restricted to $\fr$ acts simultaneously diagonalizable over $\C$ on the space of solutions. Therefore, we find corresponding eigenspaces and the transition matrix $T$ to the basis, where $\exp(-\Phi(X_\fr))$ acts on each eigenspace by the exponential of the corresponding eigenvalue.
\item It holds $\exp(-\Phi(X_\fn))=\sum_{j=0}^n(-1)^j\Phi(X_\fn)^j$ for some finite $n$.
\end{enumerate}

The semisimple part $S$ needs further decomposition for the simplification of the exponential. On the other hand, $\Phi$ is a representation of $\fs$ which can be analyzed in terms of highest weights, which provides a basis of the space of solutions that allows to use an explicit realization of the representation of the Lie group $K$ (although not in exponential coordinates). To write this action in exponential coordinates we consider the Iwasawa decomposition $S=\textsf{ KAN}$, \cite[Section 2.3.5.]{parabook}. We recall that $\textsf{K}$ is the maximal compact subgroup of $S$, $\textsf{A}$ is maximal diagonalizable subgroup of $S$ and $\textsf{N}$ is nilpotent subgroup of $S$. The exponential coordinates in $\textsf{A}$ and $\textsf{N}$ can be considered together with the possible part $\exp(\fr)\exp(\fn)$ as above. So it remains to choose coordinates of $\textsf{K}$ where one usually decomposes $\textsf{K}$ into building blocks like rotations. This allows to express all the elements of $\textsf{K}$ as their composition. Altogether, we obtain the following statements.
\begin{prop} \label{modify-exp}
Let $K$ be the Lie algebra with the above decomposition $K=\textsf{KAN}\exp(\fr)\exp(\fn)$. Then there is a complement $\fc$ of $\fh$ in $\fk$ such that each $X\in \fc$ can be written as the sum $X_{\textsf{k}_1}+\dots +X_{\textsf{k}_j}+X_{\textsf{a}\oplus \fr}+X_{\textsf{n}\oplus \fn}$ of elements of $\fc$ that are 
parts of decomposition. Moreover, these exponential coordinates cover the whole $K/H$ and 
\begin{align*}
&s(\exp(X_{\textsf{k}_1})\dots \exp(X_{\textsf{k}_j})\exp(X_{\textsf{a}\oplus \fr})\exp(X_{\textsf{n}\oplus \fn})H)=
\\ 
&\pi_0(\exp(-\Phi(X_{\textsf{n}\oplus \fn}))\exp(-\Phi(X_{\textsf{a}\oplus \fr}))\exp(-\Phi(X_{\textsf{ k}_j}))\dots\exp(-\Phi(X_{\textsf{ k}_1}))s(e)).
\end{align*}
\end{prop}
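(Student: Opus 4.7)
The plan is to combine the product decomposition of $K$ hypothesized in the statement with the parallel-transport formula \eqref{parsec} of Proposition \ref{prop1.1}, applied iteratively. The Iwasawa decomposition $S=\textsf{KAN}$ is global, $\textsf{K}$ decomposes into a product of one-parameter subgroups $\textsf{K}_1\cdots \textsf{K}_j$ (e.g.\ Euler-angle type coordinates), and the Jordan decomposition of the radical supplies the remaining $\exp(\fr)\exp(\fn)$ factor. The resulting subspaces $\textsf{k}_1,\dots,\textsf{k}_j,\textsf{a},\textsf{n},\fr,\fn$ of $\fk$ together span $\fk$.

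First I would choose the complement $\fc$ of $\fh$ compatibly with this factorization: in each subspace from the above list I pick a linear complement of its intersection with $\fh$, and take $\fc$ to be the direct sum of these choices. This gives the desired decomposition
$$\fc=\fc_{\textsf{k}_1}\oplus\cdots\oplus\fc_{\textsf{k}_j}\oplus\fc_{\textsf{a}\oplus\fr}\oplus\fc_{\textsf{n}\oplus\fn},$$
and the product map $(X_{\textsf{k}_1},\dots,X_{\textsf{n}\oplus\fn})\mapsto \exp(X_{\textsf{k}_1})\cdots\exp(X_{\textsf{n}\oplus\fn})H$ is onto $K/H$ because the group-theoretic factorization of $K$ already surjects onto $K/H$ and each factor is reparameterized by its piece of $\fc$.

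Next I apply \eqref{parsec} inductively. Reparametrizing with $Y=\Ad_k^{-1}X$, that formula reads $s(k\exp(Y))=\exp(-\Phi(Y))s(k)$ for $Y$ in a neighbourhood of $0$ in $\fk$. Starting from $k=e$ and right-multiplying successively by $\exp(X_{\textsf{k}_1}),\exp(X_{\textsf{k}_2}),\dots,\exp(X_{\textsf{n}\oplus\fn})$, each step picks up a factor $\exp(-\Phi(X_{\textsf{k}_i}))$ on the left; since these are applied to $s(e)$ one after another, they appear in the reverse order stated in the claim. The iteration extends from local neighbourhoods to the whole product because the section $s$ corresponding to a value in $\mathcal{S}^\infty_K$ is globally defined, as explained in Proposition \ref{prop1.1}. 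Finally, since the underlying BGG solution lives in $\mathcal{H}^0(V)$, I project the resulting $V$-valued formula by $\pi_0$.

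The main obstacle I expect is bookkeeping around the compatibility of $\fc$ with $\fh$ across the several factors: if $\fh$ meets more than one of the listed subspaces non-trivially, some care is needed to verify that the chosen $\fc$ really is a linear complement of $\fh$ in $\fk$ and that the induced map from $\fc$ still covers $K/H$. This reduces to the observation that the hypothesized factorization $K=\textsf{K}_1\cdots\textsf{K}_j\textsf{A}\textsf{N}\exp(\fr)\exp(\fn)$ descends to a surjection on $K/H$, after which picking complements factorwise is harmless.
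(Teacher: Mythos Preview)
Your proposal is correct and follows essentially the same line as the paper. The paper's proof is extremely terse: it asserts that the decomposition $K=\textsf{KAN}\exp(\fr)\exp(\fn)$ ``clearly'' yields the complement, that $\textsf{AN}\exp(\fr)\exp(\fn)$ is covered ``by definition'', and handles $\textsf{K}$ by invoking that the Weyl subgroup lies in the image of $\exp$ together with compactness; it does not spell out the formula for $s$ at all, leaving that to the preceding discussion and Proposition~\ref{prop1.1}. You cover the compact factor via an Euler-angle product of one-parameter subgroups rather than the paper's Weyl-group-plus-compactness argument, which is a slightly different but equally valid device for the same goal, and you are more explicit both about the iterated use of \eqref{parsec} and about the bookkeeping needed when $\fh$ meets several factors---a point the paper simply waves through.
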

\begin{proof}
The existence of the decomposition $K=\textsf{KAN}\exp(\fr)\exp(\fn)$ clearly implies that such  complement exists. The existence of the complete covering of $\textsf{AN}\exp(\fr)\exp(\fn)$ is clear by definition.
Since the Weyl subgroup of $\textsf{K}$ is contained in the image of the exponential map, the combination of single exponential map with finite number of elements of the Weyl subgroup allows to cover $\textsf{K}$ by compactness.
\end{proof}
Let us emphasize that the exponential coordinates from Proposition \ref{modify-exp} are a priory injective only on the part $\textsf{N}$ and does not have to be injective on any other part which can pose an obstruction for global existence of the solution if $\exp(-\Phi(\ ))$ has different values for different representatives of the same point. We discuss this phenomena on explicit examples in Section \ref{exam-coord}.

\subsection{Normal solutions in normal coordinates and curved orbit decomposition}
\label{curved-decomp}
In the space of solutions of standard operator $D^{st}$, the normal solutions form a particular $\fk$ --subrepresentation characterized by the property that $\Psi$ annihilates these solutions.  This means that the map $\alpha$ intertwines the $\fk$--representation on the space of normal solutions with the $\fg$--representation on $V$. 

Since $\fg_-\subset \fg$ is a nilpotent complement of $\fp$ in $\fg$, it provides nice coordinates on $G/P$. Replacing the exponential map on $G$ by the flow 
of constant vector fields $\omi(X)$ for $X\in \fg_-$ we obtain a comparison of the model space $G/P$ with the particular parabolic geometry (regardless of whether the geometry is homogeneous or not). In particular, these flows define the normal coordinates for parabolic geometries. 

\begin{lem}[\cite{CGH1}]
In normal coordinates, normal solutions take form
$$\nu(p(Fl_1^{\omi(X)}(k)))=\exp(-X)\nu(k)=\sum_{j=0}^n(-1)^j\rho(X)^j\nu(k)$$
for some neighborhood of $0$ in $\fg_-$, where $p$ is the projection from the Cartan bundle on the manifold. 
In particular, normal solutions are polynomial in normal coordinates.
\end{lem}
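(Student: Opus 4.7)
The plan is to exploit the defining property of normal solutions, namely that $s := \mathcal{L}_0(\nu)$ is a parallel section of the tractor connection $\nabla^{\rho\circ\alpha}$, and to integrate this parallel equation along the flow lines of constant vector fields on the Cartan bundle $\ba$. First, lift $s$ to its $P$-equivariant representative $\tilde s : \ba \to V$. In terms of the Cartan connection $\om$, the tractor connection is characterized by
\[
\widetilde{\nabla^{\rho\circ\alpha}_\xi s}(u) \;=\; \tilde\xi.\tilde s(u) + \rho(\om(\tilde\xi))\tilde s(u)
\]
for any lift $\tilde\xi \in T_u\ba$ of $\xi \in T_{p(u)}M$. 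Applied to the constant vector field $\omi(X)$ for $X\in \fg_-$ (for which $\om(\omi(X))=X$ by the defining property of a Cartan connection), the condition $\nabla^{\rho\circ\alpha} s = 0$ translates into the pointwise first-order linear equation $\omi(X).\tilde s + \rho(X)\tilde s = 0$ on $\ba$.

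Second, I would restrict this ODE to an integral curve $t\mapsto \Fl_t^{\omi(X)}(k)$ of the single constant vector field $\omi(X)$. Along this curve $\rho(X)$ is a fixed endomorphism of $V$, so variation of constants gives
\[
\tilde s(\Fl_t^{\omi(X)}(k)) \;=\; \exp(-t\rho(X))\,\tilde s(k).
\]
Setting $t=1$ and composing with the projection $\pi_0:\ker(\partial^*)\to \mathcal{H}^0(V)$ produces the displayed formula for $\nu$ in the normal coordinates centered at $p(k)$, since these coordinates are built precisely by the time-$1$ flows of $\omi(X)$ with $X\in\fg_-$.

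Third, polynomiality follows because every $X\in\fg_-$ is $\ad$-nilpotent in the semisimple Lie algebra $\fg$; hence $\rho(X)$ acts nilpotently on the finite-dimensional representation $V$, and there is a uniform bound $n$ (depending only on $V$) with $\rho(X)^{n+1}=0$ for all $X\in\fg_-$. The exponential series therefore truncates after finitely many terms and becomes a polynomial expression in $X$, i.e.\ in the linear normal coordinates parametrized by $\fg_-$. The main obstacle I anticipate is the careful bookkeeping of the identification between $\nu$ as a section of $\mathcal{H}^0(V)$ and its preimage $\tilde s$ on $\ba$, and the verification that $\pi_0$ commutes sufficiently well with the flow step so that evaluating $\tilde s$ at $\Fl_1^{\omi(X)}(k)$ indeed reproduces the coordinate value of $\nu$; once this translation is fixed, the ODE argument and the nilpotency observation are routine.
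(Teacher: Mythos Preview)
Your argument is correct and is essentially the standard proof of this result. Note that the paper does not supply its own proof of this lemma---it is stated with a citation to \cite{CGH1}---so there is nothing to compare against here. Your three steps (translate parallelism into the first-order ODE $\omi(X).\tilde s + \rho(X)\tilde s = 0$ along the flow of a constant vector field, integrate to get $\tilde s(\Fl_t^{\omi(X)}(k))=\exp(-t\rho(X))\tilde s(k)$, and invoke nilpotency of $\rho(X)$ for $X\in\fg_-$) are exactly the intended mechanism.

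Two minor remarks. First, the bookkeeping worry you flag is not really an obstacle: in the lemma's formula, $\nu$ is being treated as the $P$-equivariant function $\tilde s=\mathcal{L}_0(\nu):\ba\to V$ (note that $k$ lives in the Cartan bundle, since $\omi(X)$ is a vector field there), so no commutation with $\pi_0$ is needed at this stage. Second, your nilpotency justification via $\ad$-nilpotency is fine, but a more direct route in this setting is to use the grading: $\rho(X)$ for $X\in\fg_i$, $i<0$, shifts the eigenspaces $V_j$ of the grading element by $i$, hence is nilpotent on the finite-dimensional $V$ with a uniform bound on the nilpotency degree. (Incidentally, the displayed sum in the paper's statement is missing the factorials $1/j!$ from the exponential series; your derivation gives the correct $\exp(-\rho(X))$.)
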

In fact, the polynomial expressions for the normal solutions have the same form as on the flat model $(G\to G/P,\om_G)$. This means that in normal coordinates, it suffices to test whether linear combinations of these canonical polynomial forms are solutions of $D^{st}.$

Solutions have polynomial singularities that correspond to different $P$--orbits 
of $\nu$ in the $G$--type $\mathcal{O}$ of the corresponding holonomic reduction.
Via the comparison in normal coordinates, the manifold $K/H$ decomposes into initial submanifolds $M_\beta$ according to the $P$--type, where $\beta$ is representative of the coset $P\backslash \mathcal{O}.$ We denote $G_\beta$ the stabilizer of $\alpha\in \mathcal{O}$.
The normal solution $\nu$ provides a unique Cartan connection $\om_\beta$ of type $(G_\beta,G_\beta\cap P)$ on $\nu^{-1}(\beta)\cap p^{-1}(M_\beta)\to M_\beta$ such that $j^*\om_\alpha=\om_\beta$, where $j$ is the inclusion of $M_\beta$ into $M$. Note that only automorphisms preserving the $P$--type restrict to automorphisms of these Cartan geometries, i.e., in general, this Cartan geometry does not have to be homogeneous anymore.


In applications, the main problem is to find suitable normal coordinates. This is not simple problem even on homogeneous parabolic geometries. Indeed, if $\fc$ is a complement of $\fh$ in $\fk$, then each element $X$ of $\fg_-$ can be uniquely written as $X=\alpha(X_\fc)+X_\fp$ for $X_\fc\in \fc$ and $X_\fp\in \fp$, and the component of $X_\fp$ outside of $\alpha(\fh)$ is obstruction for the flow to be expressed by exponential map of elements of $\fk$, i.e., finding the normal coordinates requires solving of system of ODEs.

\section{Coordinate description of Solutions of BGG operators on examples} \label{exam-coord}
We use the methods from Section \ref{solcoordcap} to give coordinate description of local solutions we obtained in Section \ref{bggexam}. 
In the case of the projective geometry from Example \ref{solproj}, we also present the formulas for the actual BGG operators and show how these look like in several distinguished coordinates. In particular, we realize the normal coordinates and observe the polynomiality of solutions as mentioned Section \ref{curved-decomp}. In the case of the dual standard bundle, we discuss the corresponding holonomy reductions and their interpretation in detail.

In the other examples, we consider the exponential coordinates introduces in Section  \ref{exponential-coord}. In the Example \ref{exp-cproj}, we moreover discuss holonomy reduction for a specific normal solution in a representation of Hermitian type (real representation of complex Lie algebra). In the  Example \ref{exp-234}, we describe all sub--Riemannian metrics in detail. In Examples
\ref{exp-crlagr} and \ref{exp-path}, we discuss possible choices of transitive groups and  dependence of global existence of the solutions on such choice.

\subsection{First BGG operators and their solutions in different coordinates in projective geometry} \label{proj-bgg}

For the Example presented in Sections \ref{proj-alp} and \ref{solproj}, we compare our method of finding solutions of first BGG operators to more direct methods and describe solutions in three different coordinates which are natural for the realization of the computation. 

In the first place, we have the coordinates $(y_1,y_2,y_3)$ in which we introduced our example in Section \ref{proj-alp}.
 We also have the expontial coordinates $(x_1,x_2,x_3)$ introduced in Proposition \ref{proj-CC} that are obtained by the flow (for time $1$) of the right--invariant vector field $x_1\partial_{y_1}+x_2( \partial_{y_2}+y_1 \partial_{y_3})+x_3 \partial_{y_3}$ which form the complement $\fc$. The transition between these two coordinate systems is provided by the diffeomorphism $y_1=x_1, y_2=x_2, y_3=\frac12x_1x_2+x_3.$ 
Denoting by $\theta_1,\theta_2,\theta_3$ the pullbacks of the Maurer--Cartan form on $K$ in these coordinates, the expression for $\s^*\omega$ and $\s^*\kappa$ in this coframe does not change, which highlights that fact that the extension $(\alpha,i)$ does not depend on the choice of the coordinates, see Section \ref{proj-alp}.

We also consider the normal coordinates $(n_1,n_2,n_3)$ with transition map
\begin{gather*}
y_1 = -\frac12\ln(1-n_1)+\frac12\ln(n_1+1), \ \ \ \ 
y_2 = -\frac{n_2}{n_1-1},\\ 
y_3 = -\frac{n_2\ln(n_1+1)+n_2\ln(1-n_1)}{4n_1}+\frac{n_2-4n_3}{4n_1(n_1+1)}-\frac{3n_2}{4n_1(n_1-1)}+\frac{n_3-n_2}{n_1}.
\end{gather*}
given by the projection of the flow of 
$$\omega^{-1}(\left[\begin{smallmatrix}
0& 0& 0 & 0\\
n_1& 0& 0 & 0\\
n_2&0& 0 & 0\\
n_3& 0& 0& 0
\end{smallmatrix}\right])$$ 
starting at $\lz e,e \pz\in K\times P.$ 
Since the flow is not contained in $K$, we compute also the projection of the flow to $P$ as
\begin{gather*}
p(n_1,n_2,n_3)= \left[ \begin {smallmatrix} \sqrt {1- n_1}\sqrt { n_1+1}&-{
\frac { n_1}{\sqrt {1- n_1}\sqrt { n_1+1}}}&0&0
\\ 
0&{\frac {1}{\sqrt {1- n_1}\sqrt { n_1+1
}}}&0&0\\
0&{\frac { n_2\sqrt { n_1+1}}{
 (1 - n_1 ) ^{{3\over 2}}}}&{\frac {\sqrt { n_1+1}}{
\sqrt {1- n_1}}}&0\\ 
0&{\frac {{ n_1}^{2} n_2-{ n_1}^{2} n_3+2 n_1 n_3- n_3}{
 ( 1- n_1 ) ^{{3\over 2}} (  n_1+1 ) ^{{3\over 2}}}}&
{\frac { n_1}{\sqrt {1- n_1}\sqrt { n_1+1}}}&{\frac {
\sqrt {1- n_1}}{\sqrt { n_1+1}}}\end {smallmatrix} \right].
\end{gather*}
This provides a different trivialization $K\times P$ of the Cartan bundle and corresponding pulbacks $\tilde \s^*\omega:=\Ad_{p(n_1,n_2,n_3)}^{-1}\s^*\omega+\delta p(n_1,n_2,n_3)$ and $\tilde \s^*\kappa=\Ad_{p(n_1,n_2,n_3)}^{-1}\s^*\kappa$  take form 
\[
\tilde \s^*\omega= \left[ \begin {smallmatrix}
0&0&0&0
\\ 
dn_1&0&0&0\\
dn_2&0&0&0\\ 
q_1(n_2dn_1-n_1dn_2)+dn_3
&
 q_2(n_2dn_1-n_1dn_2)
&0&0\end{smallmatrix} \right],
\]
\[
\tilde \s^*\kappa((x_1,x_2,x_3),(y_1,y_2,y_3))= \left[ \begin {smallmatrix}
0&0&0&0
\\ 
0&0&0&0\\
0&0&0&0\\ 
0&
 \frac{2(x_1y_2-x_2y_1)}{4(1-n_1)^3(n_1+1)}
&0&0\end{smallmatrix} \right],
\]
where $\delta p(n_1,n_2,n_3)$ is the left logarithmic derivative and $$q_1=\frac{\ln(\frac{n_1+1}{1-n_1})(n_1^2-1)+2n_1 }{4(n_1-1)n_1^2}, \ \ q_2= \frac{\ln(\frac{n_1+1}{1-n_1})(n_1-1)^2-2n_1}{4(n_1-1)^2n_1^2}.$$

In the next, we use the trivializations $\s^*s$ and $\tilde \s^*s$ of sections of tractor bundles from Section \ref{solproj}, i.e., we view the tractors as matrices with $w_i$--entries that become functions in one of the above coordinates. Then $D^\fk$ (part of the tractor connection) is a matrix of one--forms $D^\fk w_i=\sum_j L_{X_j}(w_i)\theta_j$, where $L$ is the Lie derivative in directions of the left--invariant vector fields for the basis $X_i\in \fk$ defined in Section \ref{proj-alp}. Further, the coefficients of the polynomials $Q$ in the construction of the splitting operators and the projections $\pi_0,\pi_1$ to $\mathcal{H}^0(V), \mathcal{H}^1(V)$ are well known,  cf. \cite{Hammerl}, and we do not review them in detail.

\noindent
{\bf (1) Standard representation.}
In the case of standard representation, we get
\begin{gather*}
\mathcal{D}^{st}\circ \pi_0([0 \:|\:w_2,w_3,w_4])=\pi_1\circ \nabla^{\rho\circ \alpha}\circ (\id-{ \frac13}\partial^*\nabla^{\rho\circ \alpha})( [0 \:|\:w_2,w_3,w_4])=\\
\left[ \begin{smallmatrix} \sum_j L_{X_j}(w_2)\theta_j\\ \sum_j L_{X_j}(w_3)\theta_j\\ \sum_j L_{X_j}(w_4)\theta_j \end{smallmatrix} \right]+
 \left[ \begin{smallmatrix}
\theta_1&0&0&0\\ 
\theta_2&- \theta_2& - \theta_1&0\\ 
\theta_3&\theta_3& -\theta_1& \theta_1 \end{smallmatrix} \right].\left[ \begin{smallmatrix}-\frac13 \sum_j L_{X_j}(w_{j+1})\\ w_2\\w_3\\w_4 \end{smallmatrix} \right]
\end{gather*}
for the pullback by $\s$ and $\mathcal{D}^{st}\circ \pi_0([0 \:|\:w_2,w_3,w_4])$ equals to
\begin{gather*}
\pi_1\circ \nabla^{\rho\circ \alpha}\circ (\id-{\frac13}\partial^*\nabla^{\rho\circ \alpha})( [0 \:|\:w_2,w_3,w_4]) =  \left[ \begin{smallmatrix} \sum_j \partial_{n_j}w_2dn_j\\ \sum_j \partial_{n_j}w_3dn_j\\ \sum_j \partial_{n_j}w_4dn_j \end{smallmatrix} \right]
+\\
 \left[ \begin {smallmatrix}
dn_1&0&0&0\\
dn_2&0&0&0\\ 
q_1(n_2dn_1-n_1dn_2)+dn_3
& q_2(n_2dn_1-n_1dn_2)
&0&0\end{smallmatrix} \right]. 
\left[ \begin{smallmatrix} -\frac13 (\sum_j \partial_{n_j}w_{j+1}+ q_2 (n_2\partial_{n_3}w_{2}-n_1\partial_{n_3}w_{3})) \\w_2\\w_3\\w_4 \end{smallmatrix} \right]
\end{gather*}
for the pullback by $\tilde \s$. These provide the following eight PDEs (the ninth one is linearly dependent on the others) for the pullback by $\s$
\begin{gather*}
L_{X_1}(w_2)- {\scriptstyle \frac13 \sum_j} L_{X_j}(w_{j+1})=0, \ \ 
L_{X_1}(w_3)-w_3=0, \ \ 
L_{X_1}(w_4)+w_4-w_3=0\\
L_{X_2}(w_2)=0,\ \ 
L_{X_2}(w_3)-w_2- {\scriptstyle \frac13 \sum_j} L_{X_j}(w_{j+1})=0,\ \ 
L_{X_2}(w_4)=0\\
L_{X_3}(w_2)=0,\ \ 
L_{X_3}(w_3)=0.
\end{gather*}
This system has two--parameter family of solutions in $(y_1,y_2,y_3)$--coordinates $$w_2(y_1, y_2, y_3) = 0, w_3(y_1, y_2, y_3) = C_1e^{y_1}, w_4(y_1, y_2, y_3)= \frac12C_1e^{y_1}+C_2e^{-y_1}.$$

We do not write down analogous but much more complicated PDEs for the pullback by $\tilde \s$ because we know that solutions in normal coordinates are first order polynomials in variables $n_i$ and it is not hard to see from the formula for the BGG operator that in normal coordinates, the two--parameter family of solutions takes form
$$w_2(n_1,n_2,n_3)=0,w_3(n_1,n_2,n_3)=c_1,w_4(n_1,n_2,n_3)=c_2.$$

Finally, let us return to the exponential coordinates. We observe that $\rho\circ \alpha(x_1,x_2,x_3)$ acts on $[w_3,w_4]^t \in \mathcal{S}^{\infty}$ by the matrix $[ \begin{smallmatrix} 
 -x_1&0\\ 
 -x_1& x_1 \end{smallmatrix}]$. 
Thus 
$$\exp(-\rho\circ \alpha(x_1,x_2,x_3))= [ \begin{smallmatrix} 
e^{x_1}&0\\ 
\sinh(x_1)& e^{-x_1} \end{smallmatrix}]$$ 
and the solutions in the exponential coordinates $(x_1,x_2,x_3)$ take form
\begin{gather*}
w_2(x_1,x_2,x_3)=0,w_3(x_1,x_2,x_3)=w_3e^{x_1},w_4(x_1,x_2,x_3)=w_3\sinh(x_1)+w_4 e^{-x_1},
\end{gather*}
which are globally defined, i.e., $\mathcal{S}^{\infty}_K= \mathcal{S}^{\infty}.$
The transition between the different coordinates and the change $p(n_1,n_2,n_3)$ between the sections $\s$ and $\tilde \s$ provide the following relation between the parameters
\[
w_3=c_1=C_1,w_4=c_2=\scriptstyle{\frac12} C_1+C_2.
\] 

\noindent
{\bf (2) Dual representation.}
In the case of dual representation, we get
\begin{gather*}
\mathcal{D}^{st}\circ \pi_0([w_1 \:|\:0,0,0])=\pi_1\circ \nabla^{\rho\circ \alpha}\circ (\id+\partial^*\nabla^{\rho\circ \alpha})([w_1 \:|\:0,0,0]) = \\
[\sum_{i} L_{X_i}L_{X_1}(w_1)\theta_i,\sum_{i} L_{X_i}L_{X_2}(w_1)\theta_i,\sum_{i} L_{X_i}L_{X_3}(w_1)\theta_i ] - \\
[w_1\:|\:L_{X_1}(w_1),L_{X_2}(w_1),L_{X_3}(w_1)]. \left[ \begin{smallmatrix} \theta_1&0&0\\ 
0&0&0\\ 
- \theta_2& - \theta_1&0\\ 
\theta_3& -\theta_1& \theta_1 \end{smallmatrix} \right]
\end{gather*}
for the pullback by $\s$ and $\mathcal{D}^{st}\circ \pi_0([w_1 \:|\:0,0,0])$ equals to
\begin{gather*}
\pi_1\circ \nabla^{\rho\circ \alpha}\circ (\id+\partial^*\nabla^{\rho\circ \alpha})([w_1 \:|\:0,0,0]) = \\
[\sum_{i} \partial_{n_i}\partial_{n_1}w_1dn_i,\sum_{i} \partial_{n_i}\partial_{n_2}w_1dn_i,\sum_{i} \partial_{n_i}\partial_{n_3}w_1dn_i] - \\
\left[w_1,\partial_{n_1}w_1- q_1n_2\partial_{n_3}w_1,\partial_{n_2}w_1+q_1 n_1\partial_{n_3}w_1,\partial_{n_3}w_1\right]
.
\left[ \begin {smallmatrix}
0&0&0
\\ 
0&0&0\\
0&0&0\\ 
 q_2(n_2dn_1-n_1dn_2)
&0&0\end{smallmatrix} \right]
\end{gather*}
for the pullback by $\tilde \s$. These provide the following six PDEs (the other three are linearly dependent on others)
\begin{gather*}
L_{X_1}L_{X_1}(w_1)-w_1=0,\ \ 
L_{X_1}L_{X_2}(w_1)+L_{X_2}(w_1)+L_{X_3}(w_1)=0,\\ 
L_{X_1}L_{X_3}(w_1)-L_{X_3}(w_1)=0, \\ 
L_{X_2}L_{X_2}(w_1)=0,\ \
L_{X_2}L_{X_3}(w_1)=0,\ \
L_{X_3}L_{X_3}(w_1)=0
\end{gather*}
for the pullback by $\s$ that have three--parameter family of solutions in $(y_1,y_2,y_3)$--coordinates $$w_1(y_1, y_2, y_3) = (C_1y_2+C_2)e^{-y_1}+C_3e^{y_1}.$$

Again, we do not write down the analogous but much more complicated PDEs for the pullback by $\tilde \s$, because we know in advance that the solutions in normal coordinates are first order polynomials in variables  $n_i$  and it is not hard to see from the formula for the BGG operator that in normal coordinates, the three--parameter family of solutions takes form $$w_1(n_1,n_2,n_3)=c_1n_1+c_2n_2+c_3.$$

Finally, let us return the exponential coordinates. We observe that $\rho\circ \alpha(x_1,x_2,x_3)$ acts on $[w_1,w_2,w_3]^t \in \mathcal{S}^{\infty}$ by the matrix $-\left[ \begin{smallmatrix} 0&x_1&0\\ 
x_1&0&0\\ 
x_2&- x_2& -x_1\\ 
 \end{smallmatrix} \right]^t$ and thus 
$$\exp(-\rho\circ \alpha(x_1,x_2,x_3))= {\footnotesize \left[ \begin {array}{ccc}  \cosh(
x_1)&\sinh(x_1)&x_2e^{-x_1}\\ 
\sinh(x_1)&\cosh(x_1)&-x_2e^{-x_1}\\ 
0&0&e^{-x_1}\end {array} \right] 
}.$$ 
Thus the solutions in  the exponential coordinates take form 
$$w_1(x_1,x_2,x_3)=w_1\cosh(
x_1)+w_2\sinh(x_1)+w_3x_2e^{-x_1},$$
which are globally defined, i.e., $\mathcal{S}^{\infty}_K= \mathcal{S}^{\infty}.$ The transition between the different coordinates and the change $p(n_1,n_2,n_3)$ between the sections $\s$ and $\tilde \s$ provide the following relation between the parameters
\[
w_3=c_1=C_2,\ \ 2w_2=c_2=C_3-C_2,\ \ 2w_1=c_3=C_2+C_3.
\] 

Let us now discuss the corresponding holonomy reductions in detail. 
\begin{itemize}
\item The stabilizer of an element of $\R^{4*}$ is an opposite parabolic subgroup $P_1^t$ of $PGL(4,\R)$ and thus, on the open orbit we get a Cartan geometry of type $(P_1^t,P_1^t\cap P_1=GL(3,\R))$ which is exactly an affine connection, and normality of the solution implies that this connection is Ricci flat. 
\item Moreover, there is a single closed orbit that carries a projective geometry of type $(PGL(3,\R),P_1)$.
\end{itemize}
 Since we consider $a=0$ in the construction of the extension in Section \ref{proj-alp}, we work with a closed connection in the projective class. Therefore, to obtain the change of this connection to a Ricci flat connection, we need to put $\U_i:=L(X_i)(\ln(\frac{1}{w_1(y_1, y_2, y_3)}))$ into $i$--th position in $\fg_1$ into the matrix
 \begin{gather*}
p_{ric}(y_1,y_2,y_3)=  {\footnotesize \left[ \begin {array}{cccc}1&\U_1=\frac{(C_1y_2+C_2)e^{-y_1}-C_3e^{y_1}}{w_1(y_1, y_2, y_3) }&\U_2=\frac{-C_1e^{-y_1}}{w_1(y_1, y_2, y_3) }&0
\\ 
0&1&0&0\\
0&0&1&0\\ 
0&0&
0&1\end {array} \right]}
\end{gather*}
for the change of the Weyl structure (and trivialization of the Cartan bundle). Then the new pullback
$\bar \s^*\omega:=\Ad_{p_{ric}(y_1,y_2,y_3)}^{-1}\s^*\omega+\delta p_{ric}(y_1,y_2,y_3)$ equals to
\begin{gather*}
\left[ \begin {smallmatrix}
-\U_2\theta_1-\U_1\theta_2&0&0&0\\ 
\theta_1&\U_2\theta_1&0&0\\
\theta_2&\frac{-2C_3e^{y_1}}{w_1(y_1, y_2, y_3) }\theta_2&-\theta_1+\U_1\theta_2&0\\ 
\theta_3&\frac{2(C_1y_2+C_2)\theta_3}{w_1(y_1, y_2, y_3)e^{y_1} }&\frac{-C_2e^{-y_1}-C_3e^{y_1}}{w_1(y_1, y_2, y_3) }\theta_1+\U_1\theta_3+\U_1y_2\theta_1&\theta_1\end{smallmatrix} \right],
\end{gather*}
which is defined for $w_1(y_1, y_2, y_3)\neq 0$, i.e., 
\begin{itemize}
\item everywhere for $C_1=\U_2=0$,  and 
\item outside of the set $y_2=\frac{-C_2-C_3e^{2y_1}}{C_1}$ for $C_1\neq 0$. 
\end{itemize}
We emphasize that the corresponding connection in the projective class  on the submanifold where $w_1(y_1, y_2, y_3)\neq 0$ is Ricci flat, because we have only normal solutions. Moreover, for $C_1=C_3=0$ or $C_1=C_2=0$ we obtain a $K$--invariant Ricci flat connection in the projective class. Further, in the case $C_1\neq 0$, the projective geometry on the set $y_2=\frac{-C_2-C_3e^{2y_1}}{C_1}$ is not $K$--homogeneous. We return to this case later, after we compute the remaining automorphisms.

\noindent
{\bf (3, 4) Symmetric powers.}
Since all solutions are normal, they can be obtained by coupling the solutions from (1) and (2), see Section \ref{normal}. 
Therefore, we get tractors with entries 
\begin{gather*}
w_6(y_1, y_2, y_3) = c_1e^{2y_1},\ \   w_9(y_1, y_2, y_3) ={\scriptstyle \frac12} c_1e^{2y_1}+c_2, \\ w_{10}(y_1, y_2, y_3)= {\scriptstyle \frac14}c_1e^{2y_1}+{\scriptstyle \frac12}c_2+c_3e^{-2y_1}
\end{gather*}
 for $V=S^2\R^4$ and 
\begin{gather*}
w_1(y_1, y_2, y_3) =c_1y_2^2e^{-2y_1}+c_2y_2e^{-2y_1}+c_3y_2+c_4e^{-2y_1}+c_5+c_6e^{2y_1}
\end{gather*}
 for $V=S^2\R^{4*}$. Let us note that these solutions are globally defined, but too degenerate too apply results of \cite{FG}.

\noindent
{\bf (5) Skew--symmetric powers.}
Let us work only in the normal coordinates in order to view the difference between the normal and all solutions. The formula for $D^{st}$ is analogous to the cases (1) and (2) with splitting operator $\id-\frac12\partial^*\nabla^{\rho\circ \alpha}$. These provide the following six  PDEs (the other three are linearly dependent on others)
\begin{gather*}
\partial_{n_1}w_3=-\partial_{n_3}w_6,\ \ 
\partial_{n_2}w_3=\partial_{n_3}w_5,\ \
\partial_{n_3}w_3=0,\\
\partial_{n_1}w_5=\partial_{n_2}w_6-q_2n_1w_3+q_1(n_2\partial_{n_3}w_5+n_1\partial_{n_3}w_6),\\
\partial_{n_2}w_5=-q_1 n_1\partial_{n_3}w_5, \ \ 
\partial_{n_2}w_6= q_2n_2w_3+q_1 n_2\partial_{n_3}w_6.
\end{gather*}
The solutions take form 
\begin{gather*}
w_3(n_1, n_2, n_3) = -4 n_1c_1-(n_1+1) c_2+4 c_3,\\
w_5(n_1, n_2, n_3)={\scriptstyle \frac{(n_1^2-1)\ln(\frac{n_1+1}{1-n_1})+2 n_1}{1-n_1} } c_1 +{\scriptstyle \frac{1}{1-n_1} }c_2 +  {\scriptstyle \frac{(n_1^2-1)\ln(\frac{n_1+1}{1-n_1})-2 n_1}{1-n_1} } c_3+(n_1+1)c_4,\\
w_6(n_1, n_2, n_3) ={\scriptstyle \frac{n_2\ln(\frac{n_1+1}{1-n_1})(n_1^2-1)+(4(1-n_1)n_3+2 n_2) n_1}{(1-n_1)n_1} }c_1 + {\scriptstyle \frac{(1-n_1) n_3+n_2}{1-n_1}}c_2 \\
+ {\scriptstyle \frac{n_2\ln(\frac{n_1+1}{1-n_1})(n_1^2-1)-2 n_2 n_1}{(1-n_1)n_1} }c_3+n_2c_4+c_5.
\end{gather*}
Let us discuss which of them are polynomial. Clearly, $c_3=-c_1$ simplifies the non--polynomial part of the solution $w_5$ to $c_1 \frac{4n_1}{1-n_1}+c_2\frac{1}{1-n_1}$, which becomes polynomial for $c_2=-4c_1$. What remains is the following three--parameter family of normal solutions
\begin{gather*}
w_3(n_1, n_2, n_3) = 0,\ 
w_5(n_1, n_2, n_3)=4c_1+c_4(n_1+1),\ 
w_6(n_1, n_2, n_3) =n_2c_4+c_5.
\end{gather*}

\noindent
{\bf (6) Adjoint representation.}
The formula for $D^{st}$ is analogous to the cases (1) and (2) with the splitting operator
 $\id-\frac32\partial^*\nabla^{\rho\circ \alpha}+\frac{9}{16}(\partial^*\nabla^{\rho\circ \alpha})^2-\frac{1}{16}(\partial^*\nabla^{\rho\circ \alpha})^3$. The formula for $D^{aut}$ has an additional part that is an insertion into the Weyl curvature symmetrized as an element of $\otimes^2 \fg_{-1}^*\otimes \fg_{-1}$.  These provide the following fifteen PDEs, where the insertion into the Weyl curvature is in bold font
\begin{gather*}
 \partial_{  y_3}^{2}w_2=0,\ \
 \partial_{  y_3}^{2}w_3  =0,\ \
 \partial_{  y_3}\partial_{  y_2}w_2=0,\ \
y_2\partial_{  y_3}^{2}w_2 +  \partial_{  y_3}\partial_{  y_1}w_2 -  \partial_{  y_3}\partial_{  y_2}w_3=0,\\
\partial_{  y_2}^{2}   w_4 =0,\ \  \partial_{  y_2}^{2}w_2=0,\ \
4 \partial_{  y_3}w_2  -2 \partial_{  y_3}\partial_{  y_2}   w_3 + \partial_{  y_3}^{2} w_4  =0,\\
4 \partial_{  y_2}   w_2 +2  \partial_{  y_3}   w_2  +2   y_2 \partial_{  y_3}\partial_{  y_2}   w_2  +2 \partial_{  y_2}\partial_{  y_1}w_2 - \partial_{  y_2}^{2}   w_3  =0,\\
   y_2 \partial_{  y_3}^{2}   w_3 -2  \partial_{  y_3} w_3  + \partial_{  y_3}\partial_{  y_1}   w_3=0,\\
4  \partial_{  y_2}   w_2  -\partial_{  y_2}^{2}   w_3+2  \partial_{  y_3}\partial_{  y_2}w_4=0,\\
2   y_2  \partial_{  y_3}\partial_{  y_2}   w_3-y_2^{2}   \partial_{  y_3}^{2}   w_2      -2   y_2  \partial_{  y_3}\partial_{  y_1}   w_2  +2 \partial_{  y_2}\partial_{  y_1}   w_3  - \\
 \partial_{  y_1}^{2}   w_2  -2   y_2 \partial_{  y_3}   w_2 +2 \partial_{  y_3}   w_3  -2 \partial_{  y_1}   w_2 =0,\\
2   y_2 \partial_{  y_3}   w_2  -   y_2 \partial_{  y_3}\partial_{  y_2}   w_3  +   y_2 \partial_{  y_3}^{2}   w_4 -2  \partial_{  y_3}   w_3  +2 \partial_{  y_1}   w_2 -  \partial_{  y_2}\partial_{  y_1}   w_3  +  \partial_{  y_3}\partial_{  y_1}   w_4  =0,\\  
 y_2^{2} \partial_{  y_3}^{2}   w_3  -2   y_2 \partial_{  y_3}   w_3  +2   y_2 \partial_{  y_3}\partial_{  y_1}   w_3 + \partial_{  y_1}^{2}   w_3  -2 \partial_{  y_1}   w_3 =0,\\
2  \partial_{  y_2}   w_4  + \partial_{  y_3}   w_4 +   y_2  \partial_{  y_3}\partial_{  y_2}   w_4 - \partial_{  y_2}   w_3 +  \partial_{  y_2}\partial_{  y_1}   w_4 +    w_2+{\bf w_2}  =0,\\
y_2^{2}   \partial_{  y_3}^{2}   w_4   -2   y_2 \partial_{  y_3}   w_3 +2  y_2 \partial_{  y_3}\partial_{  y_1}   w_4 +2   y_2 \partial_{  y_3}   w_4  + \\
2 \partial_{  y_1}   w_4 -2  \partial_{  y_1}   w_3  + \partial_{  y_1}^{2}  w_4+{\bf w_3}=0
   \end{gather*}
   
The solutions take form 
\begin{gather*}
w_2(y_1, y_2, y_3) = C_1, w_3(y_1, y_2, y_3) = C_2 y_2+C_3+C_4 e^{2 y_1}, \\
w_4(y_1, y_2, y_3) = {\scriptstyle \frac12}(2 C_5 y_2- C_6)e^{-2 y_1}+C_4 {\scriptstyle\frac{A+2}{4}} e^{2 y_1}+{\scriptstyle \frac12}((- C_1- C_2) A- \\C_1+ C_2)) y_2+ (C_2 y_3+ C_3 y_1) A+C_7,
\end{gather*}
 where $A=0$ for solutions of $D^{st}$ and $A=1$ for solutions of $D^{aut}$. The infinitesimal automorphisms take form $w_2(y_1, y_2, y_3)X_1+w_3(y_1, y_2, y_3)X_2+w_4(y_1, y_2, y_3)X_3$ and we can apply the splitting operator to obtain an extension $\alpha_f$ from the full Lie algebra of infinitesimal automorphisms of this example
\[\alpha_f(C_1,\dots,C_7):=  \left[ \begin {smallmatrix} 
-\frac12  C_2& C_1&0&0\\ 
 C_1&-\frac12  C_2&0&0\\ 
  C_3+ C_4&- C_3+ C_4&-C_1+\frac12  C_2&0\\
   -\frac12  C_6+\frac34  C_4+ C_7&\frac12  C_6+\frac54  C_4+ C_7& C_5- C_1& C_1+\frac12  C_2
   \end {smallmatrix} \right].
\]
For $C_1=x_1,C_2=0,C_3=x_2,C_4=0,C_5=0,C_6=0,C_7=x_3$ we recover the original extension $\alpha$ and $x_1(\partial_{y_1}+y_2\partial_{y_3})+x_2\partial_{y_2}+( -x_1 y_2+x_2y_1 +x_3)\partial_{y_3}=x_1\partial_{y_1}+x_2(\partial_{y_2}+y_1\partial_{y_3})+x_3\partial_{y_3}$ are the right--invariant vector fields on $K$.

Let us now return to the projective geometry on the set $$M_0:=\{(y_1,y_2,y_3): y_2=\frac{-C'_2-C'_3e^{2y_1}}{C'_1}\}$$ from the point (2), where we add $'$ to distinguish the parameters. It is easy to compute that the automorphisms with $C_1=x_1,C_2=x_1, C_3=\frac{x_1C_3'}{C_1'}, C_4=0,C_6=-x_2+x_3,C_7=x_2+x_3$ are tangent to $M_0$ and we can restrict $\alpha_f$ to  \[
\alpha(x_1,x_2,x_3):= \left[ \begin {smallmatrix} 
-\frac23x_1& x_1&0\\ 
 x_1&-\frac23x_1&0\\ 
   x_2&x_3& \frac43x_1
   \end {smallmatrix} \right]
\]
for $\fk=x_1e_1+x_2e_2+x_3e_3$ with $[e_1,e_2]=2e_2-e_3,[e_1, e_3] =2e_3-e_2$. It is not hard to check that this is normal extension $\fk\to \frak{sl}(3,\R)$ that gives the flat projective geometry on $M_0$.

\subsection{Solutions in C-projective geometry.} \label{exp-cproj}
We present here solutions for the example in Sections \ref{cproj-alp} and \ref{cproj-S}. The Lie algebra $\fk(\C)=\frak{sl}(2,\C)+\C^2$ of infinitesimal automorphisms decomposes into the radical $\langle  e_1-2e_5,e_2\rangle$ and semisimple part $\langle e_1,e_3,e_4 \rangle$. We use real coordinates $x_{2j-1}+{\rm i}x_{2j}$ corresponding to $e_j$. Then it is reasonable to fix the complement $\frak{c}$ as ${\sf c}=[x_1,x_2,x_5,x_6,x_3,x_4,x_3,-x_4,0,0]$, where $x_1$ corresponds to the Cartan subalgebra, $x_2,x_3,x_4$ to the maximal compact subalgebra $\frak{su}(2) \subset \frak{sl}(2,\C)$ and $x_5,x_6$ are contained in the radical. We use the exponential coordinates from Section \ref{exponential-coord} to present the results for tractor bundles of interest.

\noindent
{\bf (1) Standard and conjugate representation.}
 We compute that $\rho\circ \alpha({\sf c})$ acts on solutions $ [z_1,z_3]^t \in \mathcal{S}^{\infty}$ by the matrix 
$\left[ \begin{smallmatrix}
0&0\\ 
x_5+{\rm i}x_6&0
 \end{smallmatrix} \right]$
and thus $\exp(-\rho\circ \alpha({\sf c}))$ equals to
$ \left[ \begin {smallmatrix} 
1 & 0\\
-x_5-{\rm i}x_6&1
\end {smallmatrix} \right] 
.$
Thus only the radical acts on the space of solutions and the formulas  
$$ z_1=c_1,\ \ 
z_3=-c_1(x_5+{\rm i}x_6)+c_3
$$ 
are globally defined, i.e., $\mathcal{S}^{\infty}_K=\mathcal{S}^{\infty}$, and coincide in the exponential coordinates from Sections \ref{exponential-coord} and \ref{1.2}.

\noindent
{\bf (2) Dual and conjugate dual representation.}
For the dual representation, we compute that $\rho\circ \alpha({\sf c})$ acts on solutions $ [w_1,w_2,w_3,w_4,w_7,w_8]^t \in \mathcal{S}^{\infty}$ by the matrix 
$$
\left[ \begin {smallmatrix} 0&0&-x_{{1}}&-x_{{2}}&-x_{{3}}&-x_{{4}}
\\ 0&0&x_{{2}}&-x_{{1}}&x_{{4}}&-x_{{3}}
\\ 0&0&{1 \over 2}x_{{1}}&{1 \over 2}x_{{2}}&{1 \over 2}x_{{3}}&{1 \over 2}x
_{{4}}\\ 0&0&-{1 \over 2}x_{{2}}&{1 \over 2}x_{{1}}&-{1 \over 2}x_{{4}
}&{1 \over 2}x_{{3}}\\ 0&0&-x_{{3}}&x_{{4}}&-{1 \over 2}x_{{1}}&
-{1 \over 2}x_{{2}}\\ 0&0&-x_{{4}}&-x_{{3}}&{1 \over 2}x_{{2}}&-
{1 \over 2}x_{{1}}
\end {smallmatrix} \right] 
$$
For the conjugate case, the matrix differs only in signs of the imaginary parts $x_2, x_4$.

It turns out that the action of the radical on solutions is trivial. Thus we deal only with the action of the semisimple part and the Iwasawa decomposition according to Proposition \ref{modify-exp}.
We consider the change of the basis of the space of solutions given by the transition matrix  
$$
T= \left[ \begin {smallmatrix} 2&0&-2&0&0&0\\ 2&0&-2
&2&0&-1\\ 0&0&1&0&0&0\\ 0&0&1&0&0&
{1 \over 2}\\ 0&1&0&0&0&0\\ 0&1&0&0&1&0
\end {smallmatrix} \right] 
.$$
Then the action of the Cartan subalgebra on the space of solutions becomes diagonal. In the new basis, ${\sf A}$ acts for both cases on solutions by the diagonal matrix $A=diag(1,e^{{x_1 \over 2}},e^{-{x_1 \over 2}},1,e^{{x_1 \over 2}},e^{-{x_1 \over 2}})$.
In the dual case, we view the action of ${\sf K}$ as a composition of the following three matrices 
$$
R_1= \left[ \begin {smallmatrix} 1&0&0&0&0&0\\ 0&\sin
 ( x_{{2}} ) +\cos ( x_{{2}} ) &0&0&\sin ( 
x_{{2}} ) &0\\ 0&0& \cos ( x_{{2}} )-\sin ( x_{{2}}
 ) &0&0&-{1 \over 2}\sin ( x_{{2}}
 ) \\ 0&0&0&1&0&0\\ 0&-2
\sin ( x_{{2}} ) &0&0& \cos
 ( x_{{2}} ) -\sin ( x_{{2}} )&0\\ 0&0&4\sin ( x_{{
2}}) &0&0&\sin ( x_{{2}}) +\cos ( x_{{2}}
 ) \end {smallmatrix} \right],
$$
$$
R_2= \left[ \begin {smallmatrix} 1&0&0&0&0&0\\ 0&\cos
 ( x_{{3}} ) &\sqrt {2}\sin ( x_{{3}} ) &0&0&0
\\ 0&-{1 \over 2}\sqrt {2}\sin ( x_{{3}} ) &
\cos ( x_{{3}} ) &0&0&0\\ 0&0&0&1&0&0
\\ 0&0&0&0&\cos ( x_{{3}} ) &{1 \over 2}\sqrt {
2}\sin ( x_{{3}} ) \\ 0&0&0&0&-\sqrt {2}
\sin ( x_{{3}} ) &\cos ( x_{{3}} ) \end {smallmatrix}
 \right],
$$
$$
R_3=\left[ \begin {smallmatrix} 1&0&0&0&0&0\\ 0&\cos
 ( x_{{4}} ) &-\sqrt {2}\sin ( x_{{4}} ) &0&0&-1
/2\sqrt {2}\sin ( x_{{4}} ) \\ 0&-{1 \over 2}
\sqrt {2}\sin ( x_{{4}} ) &\cos ( x_{{4}} ) &0&-
{1 \over 2}\sqrt {2}\sin ( x_{{4}} ) &0\\ 0&0&0
&1&0&0\\ 0&0&2\sqrt {2}\sin ( x_{{4}}
 ) &0&\cos ( x_{{4}} ) &{1 \over 2}\sqrt {2}\sin ( x_{
{4}} ) \\ 0&2\sqrt {2}\sin ( x_{{4}}
 ) &0&0&\sqrt {2}\sin ( x_{{4}} ) &\cos ( x_{{4}
} ) \end {smallmatrix} \right].
$$
In the conjugate case, the matrices $R_i$ differ only in signs of imaginary parts $x_2, x_4$. Altogether, $\exp(-\rho\circ \alpha({\sf c}))$ equals to $TAR_3R_2R_1T^{-1}$. We compute that in the dual case, all the solution  take in exponential coordinates  form $z_1$, where
\begin{gather*}
\Re(z_1)=c_{{1}}+ ( {{ e}^{-{x_{{1}} \over 2}}} (  2
\sin ( x_{{2}} ) \sin ( x_{{4}} ) \sin ( 
x_{{3}} )  )-2\cos ( x_{{2
}} ) \cos ( x_{{3}} ) \cos ( x_{{4}} ) +2 ) c_{{3}}+ 
\\
 ( 2\sin ( x_{
{4}} ) \sin ( x_{{3}} ) \cos ( x_{{2}} ) +
2\sin ( x_{{2}} ) \cos ( x_{{3}} ) \cos
 ( x_{{4}} )  ) {{ e}^{-{x_{{1}} \over 2}}}c_{{4}}+
 \\
 ( \cos ( x_{{2}} ) \cos ( x_{{4}} ) \sin
 ( x_{{3}} ) -\sin ( x_{{2}} ) \cos ( x_{{3
}} ) \sin ( x_{{4}} )  ) \sqrt {2}{{ e}^{-{x_{{1}} \over 2}}}c_{{7}}+
\\
 ( \cos ( x_{{3}} ) \sin ( x_
{{4}} ) \cos ( x_{{2}} ) +\sin ( x_{{2}}
 ) \cos ( x_{{4}} ) \sin ( x_{{3}} ) 
 ) \sqrt {2}{{ e}^{-{x_{{1}} \over 2}}}c_{{8}},
\end{gather*}
\begin{gather*}
\Im(z_1)=c_{{2}}+ ( 2\sin ( x_{{4}} ) \sin ( x_{{3}}
 ) \cos ( x_{{2}} )+ 2\sin ( x_{{2}} ) 
\cos ( x_{{3}} ) \cos ( x_{{4}} )  ) {
{ e}^{-{x_{{1}} \over 2}}}c_{{3}}+ 
\\
( {{ e}^{-{x_{{1}} \over 2}}}
 (2\sin ( x_{{2}} ) \sin
 ( x_{{4}} ) \sin ( x_{{3}} )  ) -2\cos ( x_{{2}} ) \cos ( x_{{3}} ) 
\cos ( x_{{4}} )  +2
 ) c_{{4}}- 
\\
 ( \cos ( x_{{3}} ) \sin ( x_{{
4}} ) \cos ( x_{{2}} ) +\sin ( x_{{2}} ) 
\cos ( x_{{4}} ) \sin ( x_{{3}} )  ) 
\sqrt {2}{{ e}^{-{x_{{1}} \over 2}}}c_{{7}}+ 
\\
 ( \cos ( x_{{2}}
 ) \cos ( x_{{4}} ) \sin ( x_{{3}} ) -\sin
 ( x_{{2}} ) \cos ( x_{{3}} ) \sin ( x_{{4}
} )  ) \sqrt {2}{{ e}^{-{x_{{1}} \over 2}}}c_{{8}}.
\end{gather*}
In the conjugate case, all the solution in  exponential coordinates differ just in the signs of imaginary parts $x_2, x_4$. Both are globally defined.


\noindent
{\bf (4) Representation on Hermitian matrices of order $4$.} 
All solutions are normal, so we can use coupling to describe all of them using the normal solutions from part (2). The coupling provides the following relations between the parameters describing the spaces of solutions, where we denote by the index $d$ the dual solution, and by the index $c$ the conjugate solutions:
$c_1= 2c_1^d c_4^c$, $c_4=2 c_1^d c_8^c$, $c_5= 2c_3^d c_8^c$, $c_7= c_1^d c_1^c$, $c_8= 2c_1^d c_3^c$, $c_9= c_3^d c_3^c$, $c_{13}= 2c_1^d c_7^c$, $c_{14}= 2c_3^d c_7^c$ and $c_{16}= c_7^d c_7^c$.
This gives all solutions as
\begin{gather*}
w_7(x_i)= -( 4\sin ( x_{{3}} ) \sin ( x_{{4}} ) 
\cos ( x_{{2}} ) +4\sin ( x_{{2}} ) \cos
 ( x_{{3}} ) \cos ( x_{{4}} )  ) {{ e}^
{-{x_{{1}} \over 2}}}c_{{1}}+ 
\\
(  2\cos ( x_
{{4}} ) \sin ( x_{{3}} ) \cos ( x_{{2}} ) 
 -2\sin ( x_{{2}} ) \sin
 ( x_{{4}} ) \cos ( x_{{3}} )) \sqrt {2}{{ e}^{-{x_{{1}} \over 2}}}c_{{13}}+ 
\\
(  (  4\cos ( x_{{4}} ) \sin ( x_{{3}}
 ) \cos ( x_{{2}} )  -4
\sin ( x_{{2}} ) \sin ( x_{{4}} ) \cos ( 
x_{{3}} )) \sqrt {2}{{ e}^{-{x_{{1}} \over 2}
}}+ 
\\
 ( -2\sqrt {2}\sin ( 2x_{{3}} ) \cos
 ( 2x_{{2}} ) \cos ( 2x_{{4}} ) +2\sqrt {2
}\sin ( 2x_{{4}} ) \sin ( 2x_{{2}} ) 
 ) {{ e}^{-x_{{1}}}} ) c_{{14}}+ 
\\
( -\cos ( 2
x_{{4}} ) \cos ( 2x_{{3}} ) +1 ) {{ e}^{
-x_{{1}}}}c_{{16}}+ 
\\
 ( -2\sin ( x_{{4}} ) \cos
 ( x_{{3}} ) \cos ( x_{{2}} ) -2\sin ( x_
{{2}} ) \cos ( x_{{4}} ) \sin ( x_{{3}} ) 
 ) \sqrt {2}{{ e}^{-{x_{{1}} \over 2}}}c_{{4}}+
\\
 (  ( -4
\sin ( x_{{4}} ) \cos ( x_{{3}} ) \cos ( 
x_{{2}} ) -4\sin ( x_{{2}} ) \cos ( x_{{4}}
 ) \sin ( x_{{3}} )  ) \sqrt {2}{{ e}^{-{x_{{1}} \over 2}
}}+ 
\\
( 2\sqrt {2}\sin ( 2x_{{3}} ) \sin
 ( 2x_{{2}} ) \cos ( 2x_{{4}} ) +2\sqrt {2
}\sin ( 2x_{{4}} ) \cos ( 2x_{{2}} ) 
 ) {{ e}^{-x_{{1}}}} ) c_{{5}}+
c_{{7}}+  
\\
(  ( 
4\sin ( x_{{3}} ) \sin ( x_{{4}} ) \sin
 ( x_{{2}} ) -4\cos ( x_{{3}} ) \cos ( x_
{{4}} ) \cos ( x_{{2}} )  ) {{ e}^{-{x_{
{1}} \over 2}}}+4 ) c_{{8}}+ 
\\
(  ( 8\sin ( x_{{3}}
 ) \sin ( x_{{4}} ) \sin ( x_{{2}} ) -8
\cos ( x_{{3}} ) \cos ( x_{{4}} ) \cos ( x_
{{2}} )  ) {{ e}^{-{x_{{1}} \over 2}}}+ 
\\
 ( 2\cos
 ( 2x_{{4}} ) \cos ( 2x_{{3}} ) +2 ) {
{ e}^{-x_{{1}}}}+4 ) c_{{9}},
\end{gather*}
which are all globally defined. 

Let us focus on the holonomy reduction determined by the normal solution
\begin{gather*}
\nu=3-8{{ e}^{-{x_{{1}} \over 2}}}\cos ( x_{{2}} ) \cos ( 
x_{{3}} ) \cos ( x_{{4}} ) +8{{ e}^{-{x_{{1}} \over 2}}}\sin ( x_{{2}} ) \sin ( x_{{4}} ) \sin
 ( x_{{3}} ) + 
\\
4{{ e}^{-x_{{1}}}}  \cos^{2} ( x_
{{3}} )    \cos^{2}( x_{{4}} ) 
 -2{{ e}^{-x_{{1}}}}  \cos^{2} ( x_{{3}}
 )  -2{{ e}^{-x_{{1}}}}  \cos^{2} ( x_{{
4}} )  +4{{ e}^{-x_{{1}}}}
\end{gather*}
The stabilizer of the corresponding Hermitian matrix $h$ is as follows
$$
h=\left[ \begin {smallmatrix} -1&0&0&0\\ 0&1&0&0
\\ 0&0&0&0\\ 0&0&0&1\end {smallmatrix}
 \right],\ \ 
stab(h)=\left[ \begin {smallmatrix} {\rm i} ( -{ x_8}-{ x_{10}}-{ x_{12}}
 ) &{ x_{30}}-{\rm i}{ x_{29}}&0&{ x_5}-{\rm i}{ x_6}
\\ { x_{30}}+{\rm i}{ x_{29}}&{\rm i}{ x_8}&0&-{ x_{21}}+{\rm i}{
 x_{22}}\\ { x_3}+{\rm i}{ x_4}&{ x_{17}}+{\rm i}{ x_{18}}&{\rm i}
{ x_{10}}&{ x_{19}}+{\rm i}{ x_{20}}\\ { x_5}+{\rm i}{ x_6}&
{ x_{21}}+{\rm i}{ x_{22}}&0&{\rm i}{ x_{12}}\end {smallmatrix} \right] 
$$
which is a semidirect product of $\frak{su}(1,2)\oplus\frak{u}(1)$ acting on $\C^3=[{ x_3}+{\rm i}{ x_4},{ x_{17}}+{\rm i}{ x_{18}},{ x_{19}}+{\rm i}{ x_{20}}]$. 

There are several orbits characterized by the property that the solution $\nu$ is vanishing or not. The open orbits with $\nu \neq 0$ carry an affine connection preserving a complex structure and Hermitian product of signature $(1,1,1)$ or $(2,0,1)$. 
\begin{rem}
If the Hermitian product was not degenerate, then the normality of the solution would imply that the product is  K\" ahler--Einstein, \cite{CEMN}.
In our situation, we can only say that the affine connection has a special Rho--tensor.
\end{rem}

The closed orbit with $\nu=0$ is a five--dimensional submanifold in complex manifold and inherits a CR--structure. The reduction shows that the CR--structure is a product of $\C$ with a three--dimensional Levi non--degenerate CR--submanifold in $\C^2$. Moreover, the reduction provides an affine connection preserving the induced CR--structure.

\subsection{Solutions in $(2,3,5)$ distributions.} \label{exp-234}
Since this example is semisimple, we present here the solutions for the example from Sections \ref{235-alp} and \ref{235-S} in the coordinates on $K/H=(SL(2,\R)\times SO(3))/diag(SO(2))$ as follows
$$\left[ \begin {smallmatrix} \cos ( { x_1} ) { x_2}&\cos
 ( { x_1} ) { x_2}{ x_3}-{\frac {\sin ( { 
x_1} ) }{{ x_2}}}\\ \sin ( { x_1}
 ) { x_2}&\sin ( { x_1} ) { x_2}{ x_3}+{
\frac {\cos ( { x_1} ) }{{ x_2}}}\end {smallmatrix} \right]\times \left[ \begin {smallmatrix} \cos ( { x_4} ) \cos ( {
 x_5} ) &\sin ( { x_4} ) &\cos ( { x_4}
 ) \sin ( { x_5} ) \\ \noalign{\medskip}-\sin
 ( { x_4} ) \cos ( { x_5} ) &\cos ( {
 x_4} ) &-\sin ( { x_4} ) \sin ( { x_5}
 ) \\ -\sin ( { x_5} ) &0&\cos
 ( { x_5} ) \end {smallmatrix} \right].
 $$
 In the case of $SL(2,\R)$, these correspond to Iwasawa decomposition, where $x_1$ is compact, $x_2$ is abelian and $x_3$ is the nilpotent part, and in the case of $SO(3)$, $x_4,x_5$ correspond to two rotations that do not generate $diag(SO(2))$ together with $x_1$.

Let us note that in the cases (1) and (2), the solutions always sit in trivial representations so they are constant. In particular, we did not find any additional solutions for the second skew--symmetric power and thus infinitesimal automorphisms, so we do not express the solutions in the coordinates. We focus here on the case (3), i.e., metrizability.

\noindent
{\bf (3) Metrizability.} Firstly, we shall find the coordinate expression to identify submetrics with $K\times_H S^2 (\fk^{-1}/\fk^0)^*=K\times_H V_{-6}$. 
We can easily compute the Maurer--Cartan forms in our coordinates by the usual formula $k^{-1}dk$ and compute dual elements to $e_4,e_5$ as
\begin{gather*}
\theta_4={(x_2^4x_3^2-2x_2^4x_3-x_2^4+1) dx_1 \over 4x_2^2}-{(x_3-1) dx_2 \over 2x_2}-{dx_3 \over 4} -{\cos(x_5)dx_4 \over 4}+{dx_5 \over 4},\\
\theta_5=-{(x_2^4x_3^2-x_2^4+1)dx_1 \over 2x_2^2}+ {x_3 dx_2 \over x_2} +{dx_3 \over 2} -{dx_5 \over 2}.
\end{gather*}
This gives identification
$$K\times_H V_{-6}= \left[ \begin {array}{cc}s_1(x_i)  \theta_4\odot \theta_4&s_2(x_i) \theta_5\odot \theta_4\\
s_2(x_i)  \theta_4\odot \theta_5&s_3(x_i) \theta_5\odot \theta_5\end {array} \right].$$

The solutions in the representation $S^4\R^2$ of $\frak{sl}(2,\R)$ are submetrics that take in our coordinates form
\begin{gather*}
s_1(x_i)={\scriptstyle \frac {1}{x_{{2}}^{4}}}(6+ ( 6+ ( 6x_{{3}}^{4}-12x_{{3}}^{2}+6
 ) x_{{2}}^{8}+ ( 12-36{x_{{3}}}^{2} ) x_{{2}}
^{4} )   \cos^{4} ( x_{{1}} )-
\\
24x_
{{2}}^{2} (  ( x_{{3}}^{2}-1 ) x_{{2}}^{4}
 -1) x_{{3}}\sin ( x_{{1}} )   \cos^{3}( x_{{1}
} )   + (  ( 36x_{{3}}^{2}-12
 ) x_{{2}}^{4} -12)   \cos^{2} ( x_{{1}} ) -
\\
24\sin ( x_{{1}} ) \cos ( x_{{1}}
 ) x_{{3}}x_{{2}}^{2})c_1+ 
{\scriptstyle \frac {1}{x_{{2}}^{4}}}(24 (  ( x_{{3}}^{2}-1 ) x_{{2}}^{4}
 -1) x_{{3}}x_{{2}}^{2}  \cos^{4} ( x_{{1}} ) 
  +6 ( ( x_{{3}}^{2}-1 ) x_{{2}}^
{4}-
\\
2x_{{3}}x_{{2}}^{2}  -1) \sin ( x_{{1}} ) 
 (  ( x_{{3}}^{2}-1 ) x_{{2}}^{4}+2x_{{3}}x_
{{2}}^{2}-1 )   \cos^{3} (x_{{1}} )  +
 (  18( x_{{3}}-x_{{3}}^{3} ) x_{{2}}^{6}+
\\ 
30x_{{3}}x_{{2}}^{2} )   \cos^{2} ( x_{{1}}  
 ) + ( ( 18x_{{3}}^{2}-6 ) x_{{2}}
^{4}-6 ) \sin ( x_{{1}} ) \cos ( x_{{1}} ) -
6x_{{3}}x_{{2}}^{2})c_2+
\end{gather*}
\begin{gather*}
{\scriptstyle \frac { 1}{x_{{2}}^
{4}}}(( ( 36x_{{3}}^{2}-12 ) x_{{2}}^
{4} - ( 6x_{{3}}^{4}-12x_{{3}}^{2}+6
 ) x_{{2}}^{8}-6 )   \cos^{4} ( x_{{1}} )   +24x_{
{2}}^{2} (  ( x_{{3}}^{2}-1 ) x_{{2}}^{4}-
\\
1
 ) x_{{3}}\sin ( x_{{1}} )   \cos^{3} ( x_{{1}
} )   + ( 6+ ( 6x_{{3}}^{4}-12x_{{3}
}^{2}+6 ) x_{{2}}^{8}+ ( 12-36x_{{3}}^{2} ) {
x_{{2}}}^{4} )   \cos^{2} ( x_{{1}} )  - 
\\
12 (  ( x_{{3}}^{2}-1 ) x_{{2}}^{4} -1) 
\sin ( x_{{1}} ) x_{{3}}x_{{2}}^{2}\cos ( x_{{1}}
 ) + ( 6x_{{3}}^{2}-2 ) x_{{2}}^{4})c_3+
 {\scriptstyle \frac {1}{x_{{2}}^{4}}}(24x_{{3}}x_{{2}}^{2} ( 1- 
\\
( x_{{3}}^{2}-1
 ) x_{{2}}^{4} )   \cos^{4} ( x_{{1}} ) 
 -6\sin ( x_{{1}} )  (  ( x_{{3
}}^{2}-1 ) x_{{2}}^{4}+2x_{{3}}x_{{2}}^{2} -1 ) 
 (  ( x_{{3}}^{2}-1 ) x_{{2}}^{4}-2x_{{3}}x_
{{2}}^{2} -
\\
1)   \cos^{3} ( x_{{1}} )   +
 (  30( x_{{3}}^{3}-x_{{3}} ) x_{{2}}^{6}-18
x_{{3}}x_{{2}}^{2} )   \cos^{2} ( x_{{1}} ) 
  +6\sin ( x_{{1}} )  (  ( x_{{3}}
^{4}-2x_{{3}}^{2}+1 ) x_{{2}}^{4}-
\\
3x_{{3}}^{2}+1
 ) x_{{2}}^{4}\cos ( x_{{1}} ) + 6( x_{3} -x_{{3}
}^{3}) x_{{2}}^{6})c_4+
{\scriptstyle \frac { 1}{x_{{2}}^{4}}}(( 6+ ( 6x_{{3}}^{4}-12x_{{3}}^{2}+6
 ) x_{{2}}^{8}+ ( 12-
\\
36x_{{3}}^{2} ) x_{{2}}
^{4} )   \cos^{4}( x_{{1}} )  -24x_
{{2}}^{2}\sin ( x_{{1}} ) x_{{3}} (  ( x_{{3
}}^{2}-1 ) x_{{2}}^{4} -1 )   \cos^{3} ( x_{{1}}
 )   -12x_{{2}}^{4} (  ( x_{{3}}^{4}-
2x_{{3}}^{2}+
\\
1 ) x_{{2}}^{4}-3x_{{3}}^{2}+1 ) 
  \cos^{2} ( x_{{1}} )  + 24( x_{{3}}
^{3}-x_{{3}} ) x_{{2}}^{6}\sin ( x_{{1}} ) 
\cos ( x_{{1}} ) +6x_{{2}}^{8}(x_{{3}}^{4}-2x_{{3}}^{2}+1) 
c_5\\
s_2(x_i)= {\scriptstyle \frac {12 }{x_{{2}}^{2}}}(\cos ( x_{{1}} )  ( x_{{2}}^{2} 
(  ( x_{{3}}^{2}-1 ) x_{{2}}^{4} -3) x_{{3}}
  \cos^{3} ( x_{{1}} )  + ( 1+ ( 1-
\\
3
x_{{3}}^{2} ) x_{{2}}^{4} ) \sin ( x_{{1}}
 )   \cos^{2} ( x_{{1}} )   +3\cos
 ( x_{{1}} ) x_{{2}}^{2}x_{{3}}-\sin ( x_{{1}}
 )  ))c_1+
 {\scriptstyle \frac {1}{x_{{2}}^{2}}}( (  ( 36x_{{3}}^{2}-
\\
12 ) x_{{2}}^
{4} -12)   \cos^{4} ( x_{{1}} )   +12x_{
{2}}^{2}\sin ( x_{{1}} ) x_{{3}} (  ( x_{{3}
}^{2}-1 ) x_{{2}}^{4} -3)   \cos^{3} ( x_{{1}}
 )  + ( 15+ ( 9-
\\
27x_{{3}}^{2}
 ) x_{{2}}^{4} )   \cos^{2} ( x_{{1}} ) 
  +18\sin ( x_{{1}} ) \cos ( x_{{1}}
 ) x_{{3}}x_{{2}}^{2}-3)c_2+
 {\scriptstyle \frac {1}{x_{{2}}^{2}}}(6x_{{3}}x_{{2}}^{2}-12x_{{2}}^{2} (  ( x_{{3}}^{2}-1 ) 
x_{{2}}^{4} -
\\
3) x_{{3}}  \cos^{4} ( x_{{1}} ) 
  + (  ( 36x_{{3}}^{2}-12 ) x_{{2}
}^{4} -12) \sin ( x_{{1}} )   \cos^{3} ( x_{{1}}
 )   +12x_{{2}}^{2} (  ( x_{{3}}^{
2}-1 ) x_{{2}}^{4} 
\\
-3) x_{{3}}  \cos^{2} ( x_{{1}}
 )   + ( 6+ ( 6-18{x_{{3}}}^{2} ) 
x_{{2}}^{4} ) \sin ( x_{{1}} ) \cos ( x_{{1}}
 ) )c_3+
 {\scriptstyle \frac { 1}{x_{{2}}^{2}}}(( 12+ (12 -
\\
36x_{{3}}^{2} ) x_{{2}}^{4}
 )   \cos^{4} ( x_{{1}} )   -12x_{{2}
}^{2}\sin ( x_{{1}} ) x_{{3}} (  ( x_{{3}}^
{2}-1 ) x_{{2}}^{4} -3 )   \cos ^{3} ( x_{{1}}
 )  + (  ( 45x_{{3}}^{2}-15
 ) x_{{2}}^{4} -
\\
9)   \cos^{2} ( x_{{1}} ) 
  +12x_{{2}}^{2} ( ( x_{{3}}^{2}-1
 ) x_{{2}}^{4} -{\scriptstyle {3 \over 2}}) x_{{3}}\sin ( x_{{1}} ) \cos
 ( x_{{1}} ) + ( 3-9x_{{3}}^{2} ) x_{{2}}
^{4})c_4+
\\
{\scriptstyle \frac {1}
{x_{{2}}^{2}}}(12x_{{2}}^{2} (  ( x_{{3}}^{2}-1 ) x
_{{2}}^{4} -3) x_{{3}}  \cos^{4} ( x_{{1}} ) 
  + ( 12+ ( 12-36x_{{3}}^{2} ) x_{{2}
}^{4} ) \sin ( x_{{1}} )   \cos^{3} ( x_{{1}}
 )   + 
 \\
(  24( x_{{3}}-
x_{{3}}^{3}
 ) x_{{2}}^{6}+36x_{{3}}x_{{2}}^{2} )   \cos^{2}
 ( x_{{1}} )   +36x_{{2}}^{4} ( x_{{3}
}^{2}-{\scriptstyle {1 \over 3}} ) \sin ( x_{{1}} ) \cos ( x_{{1}}
 ) + 12( x_{{3}}^{3}-x_{{3}} ) x_{{2}}^{6})c_5\\
 s_3(x_i) = 24 ( 1+ ( x_{{2}}^{4}x_{{3}}^{2}-1 )   
\cos^{2} ( x_{{1}} )   -2\sin ( x_{{1}}
 ) \cos ( x_{{1}} ) x_{{3}}x_{{2}}^{2} ) 
  \cos^{2} ( x_{{1}} )   c_1+ 
  \\
    12\cos ( x_{{1}} )  ( 2  \cos^{2} ( x_{{1}}
 )   \sin ( x_{{1}} ) x_{{2}}^{4}x_{{3}
}^{2}+4x_{{2}}^{2}x_{{3}}  \cos^{3} ( x_{{1}} ) 
  -3\cos ( x_{{1}} ) x_{{2}}^{2}x_{{3}}-
\\
2\sin ( x_{{1}} )   \cos^{2} ( x_{{1}} ) 
  +\sin ( x_{{1}} )  )c_2+
( 24( 1-x_{{2}}^{4}x_{{3}}^{2} )   \cos^{4}
 ( x_{{1}} )   +48\sin ( x_{{1}} ) 
  \cos^{3} ( x_{{1}} )   x_{{3}}x_{{2}}^{2}+
  \\
 24( x_{{2}}^{4}x_{{3}}^{2}-1 )   \cos^{2} ( 
x_{{1}} )   -24\sin ( x_{{1}} ) \cos
 ( x_{{1}} ) x_{{3}}x_{{2}}^{2}
+4)c_3+(60
  \cos^{2} ( x_{{1}} )   x_{{3}}x_{{2}}^{2}-
\\
48  \cos ^{4} ( x_{{1}} )  x_{{3}}x_{{2}}
^{2}+ 24( 1-x_{{2}}^{4}x_{{3}}^{2}) \sin ( x_
{{1}} )   \cos ^{3} ( x_{{1}} )  +
 ( 24x_{{2}}^{4}x_{{3}}^{2}-12 ) \sin ( x_{{1}}
 ) \cos ( x_{{1}} ) -
\\ 12x_{{3}}x_{{2}}^{2}
)c_4+24 ( \cos ( x_{{1}} ) +1 )  (  ( x_
{{2}}^{4}x_{{3}}^{2}-1 )   \cos^{2} ( x_{{1}} ) 
  -2\sin ( x_{{1}} ) \cos ( x_{{1}}
 ) x_{{3}}x_{{2}}^{2}-
\\
x_{{2}}^{4}x_{{3}}^{2} ) 
 ( \cos ( x_{{1}} ) -1 ) c_5.
\end{gather*}

The solutions in representation $\frak{sl}(2,\R)\otimes \frak{so}(3)$ are submetrics that take in our coordinates form
\begin{gather*}
s_1(x_i)={\scriptstyle {1 \over x_2^2}} (( 1+ 
(1 -{x_{{3}}}^{2} ) x_{{2}}^{4}
 ) \cos ( 2x_{{1}} ) +2x_{2}^{2}x_{{3}}\sin
 ( 2x_{{1}} ) +2x_{{2}}^{2}\cos ( x_{{4}}
 ) -1+ \\
 ( 1-x_{{3}}^{2} ) x_{{2}}^{4}) c_{10}+ 
2\sin(x_4)c_{11}-
{ \scriptstyle \frac { 1}{x_{{2}}^{2}}}(  (  ( x_{{3}}^{2}-1 ) x_{{2}}^
{4} -1) \cos ( 2x_{{1}} ) -2x_{{2}}^{2}x_{{3}}
\sin ( 2x_{{1}} ) - 
\\
2x_{{2}}^{2}\cos ( x_{{4}}
 ) -1+ ( 1-x_{{3}}^{2} ) x_{{2}}^{4} ) c_{13}+2\sin(x_4)c_{14}
+{\scriptstyle \frac {1 }{x_{{2}}^{2}}} (4x_{{2}}^{2}x_{{3}}\cos ( 2x_{{1}} ) +
\\
 (  ( 2x_{{3}}^{2}-2 ) x_{{2}}^{4}-2 ) 
\sin ( 2x_{{1}} ))c_7
\\
 s_2(x_i)=  
(\sin ( 2x_{{1}} )-
x_{{2}}^{2}x_{{3}}(\cos ( 2x_{{1}} ) +
1)+
 {\scriptstyle {1 \over 2}}(\cos ( x_{{4}}-x_{{5}}
 ) -\cos ( x_{{4}}+x_{{5}} ) 
))c_{10} +
\\
{\scriptstyle {1 \over 2}}(\sin ( x_{{4}}-x_{{5}} ) -\sin ( x_{{4}}+x_{
{5}} ) 
)c_{11}+
{\scriptstyle \frac {1}{2x_{{2}}^{2}}}( ( 1+ ( 1-x_{{3}}^{2} ) x_{{2}}^{4}
 ) \cos ( 2x_{{1}} ) +2x_{{2}}^{2}\cos ( x
_{{5}} ) + 
\\
( x_{{3}}^{2}-1 ) x_{{2}}^{4}+2x_{{
2}}^{2}x_{{3}}\sin ( 2x_{{1}} ) + 
1)c_{12}+(x_{{2}}^{2}x_{{3}}(1-\cos ( 2x_{{1}} ))+\sin ( 2x_{{1}} ) +
\\
{\scriptstyle {1 \over 2}}(\cos ( x_{{4}}-x_{{5}}
 ) -\cos ( x_{{4}}+x_{{5}} )) 
)c_{13}+
{\scriptstyle {1 \over 2}} ( \sin ( x_{{4}}-x_{{5}} ) -\sin ( x_{{4}}+
x_{{5}} )  ) c_{{14}}+
 \end{gather*}
\begin{gather*}
{\scriptstyle \frac { 1}{x_{{2}}^{2}}}
((  ( x_{{3}}^{2}-1 ) x_{{2}}^{4}-1
 ) \sin ( 2x_{{1}} ) +2x_{{2}}^{2}x_{{3}}\cos
 ( 2x_{{1}} ) ) c_6+2(x_{{2}}^{2}x_{{3}}\sin ( 2x_{{1}} ) + 
\\
\cos ( 
2x_{{1}} ) +\cos ( x_{{4}} ) 
 )c_7 + 2\sin(x_4) c_8 +
 {\scriptstyle \frac { 1 }{2x_{{2}}^{2}}}
(( 1-x_{{3}}^{2}) x_{{2}}^{4}+2x_{{2}
}^{2}x_{{3}}\sin ( 2x_{{1}} ) + \\
2x_{{2}}^{2}\cos
 ( x_{{5}} ) -1+ ( 1+ ( 1-x_{{3}}^{2}) 
x_{{2}}^{4} ) \cos ( 2x_{{1}} ))c_9 
\\
s_3(x_i)=2(\sin ( 2 x_{{1}} )-
x_{{2}}^{2}x_{{3}}(\cos ( x_{{1}} ) -1) 
)c_{12}+4(x_{{2}}^{2}x_{{3}}\sin ( 2x_{{1}} ) +\cos ( 
2x_{{1}} ) +
\\
\cos ( x_{{5}} ) 
)c_6+2(\cos ( x_{{4}}-x_{{5}} ) -
\cos ( x_{{4}}+x_{{5}}) 
)c_7+2(\sin ( x_{{4}}-x_{{5}} ) -
\\
\sin ( x_{{4}}+x_{{5}}
) )c_8+
2(\sin ( 2 x_{{1}} )-
x_{{2}}^{2}x_{{3}}(\cos ( x_{{1}} ) +1) 
)c_9.
\end{gather*}

Finally, the solutions in $S^2\R^3$ representation of $\frak{so}(3)$ are submetrics that take in our coordinates form
\begin{gather*}
s_1(x_i)=c_{{15}}+2\cos ( 2x_{{4}} ) c_{{18}}- ( \cos
 ( 2x_{{4}} ) +1 ) c_{{19}}+c_{{20}}\sin (2
x_{{4}} ) 
\\
s_2(x_i)=(  \cos ( 2x_{{4}}-x_{{5}}
 )  -\cos ( x_{{5}}+2x_{{4}} )) c_{{18}}+  {\scriptstyle {1 \over 2}}(\cos ( x_{{5}}+2x_{{4}} - 
\\
\cos (2x_{{4}} -x_{{5}
} )) 
 ) c_{{19}}+  {\scriptstyle {1 \over 2}}(\sin ( 2x_{{4}}-x_{{5}}
 ) -\sin ( x_{{5}}+2x_{{4}} )  ) c_{{20}
}- 
\\
 {\scriptstyle {1 \over 2}}(\sin ( x_{{4}}-x_{{5}} ) + 
\sin ( 
x_{{4}}+x_{{5}} )  ) c_{{21}}-  {\scriptstyle {1 \over 2}}(\cos ( x_
{{4}}-x_{{5}} ) +\cos ( x_{{4}}+x_{{5}} ) 
 ) c_{{22}}
\\
s_3(x_i)=c_{{15}}+ ( {\scriptstyle {1 \over 2}}(\cos ( 2x_{{5}}+2x_{{4}} )  + \cos ( 2x_{{4}}-2x_{{5}}
 )) -\cos
 ( 2x_{{4}} ) ) c_{{18}}+ 
\\
 ( {\scriptstyle {1 \over 2}}-{\scriptstyle {1 \over 4}}(\cos ( 2x_{{5}}+2
x_{{4}} ) -\cos ( 2x_{{4}}-2x_{{5}} )) +{\scriptstyle {1 \over 2}}
\cos ( 2x_{{4}} ) +{\scriptstyle {3 \over 2}}\cos ( 2x_{{5}} ) 
 ) c_{{19}}+
\\
 ( {\scriptstyle {1 \over 4}}(\sin ( 2x_{{4}}-2x_{{5}}
 ) +\sin ( 2x_{{5}}+2x_{{4}} )) -{\scriptstyle {1 \over 2}}\sin
 ( 2x_{{4}} )  ) c_{{20}}+ 
\\
 {\scriptstyle {1 \over 2}}(\sin ( 
2x_{{5}}+x_{{4}} ) -\sin ( x_{{4}}-2x_{{5}}
 )  ) c_{{21}}+ {\scriptstyle {1 \over 2}}(\cos ( 2x_{{5}}+x_{{4}} ) -\cos ( x_{{4}}-2x_{{5}} )   ) c_{{
22}}.
\end{gather*}

All of these solutions are globally defined because $x_2>0$.

\subsection{Solutions in CR geometry and theory of PDEs.} \label{exp-crlagr}
We present here solutions for the examples in Sections \ref{CR-alp} and \ref{CR-S}. We see from the brackets given in Section \ref{lagr-alp} that 
$$\fk_{CR} \simeq \frak{sl}(2,\R)^{CR} \oplus \frak{heis}_{12}\oplus \frak{so}(2), \ \ \  \fk_{LC} \simeq \frak{sl}(2,\R)^{LC} \oplus \frak{heis}_{12}\oplus \frak{so}(1,1),$$ where $\frak{sl}(2,\R)^{CR}\simeq  \langle e_2,e_1+{\scriptstyle {3 \over 8}} e_6 -{\scriptstyle {1 \over 8}}e_7,e_4\rangle$, $\frak{sl}(2,\R)^{LC}\simeq  \langle e_2,e_1-{\scriptstyle {3 \over 4}} e_6 +{\scriptstyle {1 \over 4}}e_7,e_4\rangle$,  $\frak{heis}_{12}\simeq  \langle e_1,e_3,e_5\rangle$, $\frak{so}(2)\simeq  \langle e_6-3e_7\rangle \simeq \frak{so}(1,1)$ and $\fh=\langle e_6,e_7 \rangle$. In particular, the only differences between $\fk_{CR}$ and $\fk_{LC}$ are the action of $\langle e_6-3e_7\rangle$ on $\frak{heis}_{12}$ and the projection of $\fh$ into $\frak{sl}(2,\R)$ given as $\frak{so}(2)$ in the CR case and the Cartan subalgebra in the LC case. 
This however causes a big difference in the complements $\fc$ compatible with the Iwasawa decomposition from Section \ref{exponential-coord}. In the CR case, we consider the complement $\fc_{CR}$ with coordinates ${\sf c}_{CR}=[x_2+x_3, {x_2 \over \sqrt{2}}, x_4, \sqrt{2}x_1, x_5, {\scriptstyle {3 \over 8}}x_2, {\scriptstyle -{1 \over 8}}x_2]$, where $x_1$ exponentiates in $\textsf{A}$ and the rest is in $\textsf{N}\exp(\fn)$. In the LC case, we consider the complement $\fc_{LC}$ with coordinates ${\sf c}_{LC}=[x_3, x_1, x_4, x_2-x_1, x_5, 0, 0]$, where $x_1$ exponentiates in $\textsf{K}$  and  the rest is in $\textsf{N}\exp(\fn)$.

We consider several possibilities for the choice of the group $K$ with the Lie algebra  $\fk_{CR}$ or $\fk_{LC}$. For the simple part, we can consider $SL(2,\R)$ or $PGL(2,\R)=SL(2,\R)/\{\id,-\id\}$. For the Heisenberg part, we can consider the Lie group $H_{12}$ of lower triangular matrices or $H_{12}/k\mathbb{Z}$, where  
$$k\mathbb{Z} \simeq \{\left(\begin{smallmatrix} 1&0&0\\  
0&1&0\\ 
k\mathbb{Z}&0&1
 \end{smallmatrix}\right)\}.$$
Finally, for the remaining part, we consider only $SO(2)$ in the CR case and the connected component of identity $\R^+$ of $SO(1,1)$ in the LC case. To get the extension of $(K,H)$, we need to determine $H$ and integrate $\alpha|_\fh$ (if possible).

In the CR case, $H=SO(2)\times SO(2)$ or $H=PSO(2)\times SO(2)$ depending on whether we take $SL(2,\R)$ or $PGL(2,\R)$. Then $\alpha|_{\fh}$ maps ${\rm i}x\in \frak{so}(2)$ onto $diag(-\frac14{\rm i}x,\frac34{\rm i}x,-\frac14{\rm i}x,-\frac14{\rm i}x)$ for the first factor and  onto $diag(2{\rm i}x,2{\rm i}x,-6{\rm i}x,2{\rm i}x)$ for the second factor. Since the target is $PSU(1,3)$, the kernel includes fourth root of unity. This implies that $diag(\frac{k}{2}\pi {\rm i},\frac{k}{2}\pi {\rm i},\frac{k}{2}\pi {\rm i},\frac{k}{2}\pi {\rm i})$ exponentiates to identity for each $k\in \mathbb{Z}.$ Therefore, the $\alpha$ image of $2k\pi  {\rm i}\in \frak{so}(2)$ exponentiates to identity and thus integrates to a homomorphism $i: SO(2)\times SO(2)\to P_{1,3}$. Since the $\alpha$--image of $\pi  {\rm i}\in \frak{so}(2)$ does not exponentiate to identity, the extension  does not exist for $H=PSO(2)\times SO(2)$. The choices on the Heisenberg part do not influence the existence of the extension. So we discuss the global existence of solutions of BGG operators on homogeneous parabolic geometries given by extensions $(\alpha,i)$ of $(SL(2,\R)\times (H_{12}\rtimes SO(2)), SO(2)\times SO(2))$ and $(SL(2,\R)\times (H_{12}/k\mathbb{Z}\rtimes SO(2)), SO(2)\times SO(2))$ to $(PSU(1,3),P_{1,3})$, respectively.

In the LC case, $H=diag(a,\frac1a)\times \R^+, a>0$ is always product of the image of exponential map on the Cartan subalgebra $\textsf{a}$ in $\frak{sl}(2,\R)$ with the connected component of identity $\R^+$ of $SO(1,1)$. Then $\alpha|_{\fh}$ maps $x\in \mathbb{R}$  onto $diag(\frac12x,-\frac32x,\frac12x,\frac12x)$ for the first factor and  onto $diag(2x,2x,-6x,2x)$ for the second factor. This poses no obstruction for the integrability and we discuss the global existence of solutions of BGG operators on homogeneous parabolic geometries given by extension $(\alpha,i)$ of $(SL(2,\R)\times (H_{12}\rtimes  \R^+),H)$, $(SL(2,\R)\times (H_{12}/k\mathbb{Z}\rtimes \R^+),H)$, $(PGL(2,\R)\times (H_{12}\rtimes  \R^+),H)$ and $(PGL(2,\R)\times (H_{12}/k\mathbb{Z}\rtimes \R^+),H)$ to $(PGL(4,\R),P_{1,3})$, respectively.

Let us finally note that there are no solutions for the standard and dual representations, i.e. in the case (1). Further, there are no new infinitesimal automorphisms in the case (3) and the solutions of the metrizability problem (4) sit in trivial representations. Thus we only discuss solutions on the symmetric powers.

\noindent
{\bf (2) Second symmetric powers.} 
In the CR case, we compute that $\rho\circ \alpha({\sf c}_{CR})$ acts on solutions $ [w_1, w_{11}, w_{12}, w_{14}, w_{15}, w_2, w_4, w_5]^t \in \mathcal{S}^{\infty}$ by the matrix 
$$
\left[ \begin {smallmatrix} 
0&{1 \over 2}x_{{2}}&{\frac {11}{36}}
\sqrt {2}x_{{1}}&-{\frac {11}{36}}x_{{5}}&0&{\frac {11}{72}}\sqrt 
{2}x_{{2}}&-{\frac {11}{36}}x_{{4}}&0\\ -{1 \over 2}x_{{
2}}&0&{\frac {11}{72}}\sqrt {2}x_{{2}}&-{\frac {11}{36}}x_{{4}}&0&
-{\frac {11}{36}}\sqrt {2}x_{{1}}&{\frac {11}{36}}x_{{5}}&0
\\ {\frac {18}{11}}\sqrt {2}x_{{1}}&{\frac {9}{11}
}\sqrt {2}x_{{2}}&0&0&-{1 \over 8}x_{{4}}&{1 \over 2}x_{{2}}&0&{1 \over 8}x_{{5}}
\\ 0&0&0&0&{1 \over 16}\sqrt {2}x_{{2}}&0&-{1 \over 2}x_{{2}}&-
{1 \over 8}\sqrt {2}x_{{1}}\\ 0&0&0&2\sqrt {2}x_{{2}}&0&0
&4\sqrt {2}x_{{1}}&{1 \over 2}x_{{2}}\\ {\frac {9}{11}}
\sqrt {2}x_{{2}}&-{\frac {18}{11}}\sqrt {2}x_{{1}}&-{1 \over 2}x_{{2}}&0
&-{1 \over 8}x_{{5}}&0&0&-{1 \over 8}x_{{4}}\\ 0&0&0&{1 \over 2}x_{{2}
}&{1 \over 8}\sqrt {2}x_{{1}}&0&0&{1 \over 16}\sqrt {2}x_{{2}}
\\ 0&0&0&-4\sqrt {2}x_{{1}}&-{1 \over 2}x_{{2}}&0&2
\sqrt {2}x_{{2}}&0
\end {smallmatrix} \right] 
$$
We again use transition such that the action of the Cartan subalgebra on the space of solutions becomes diagonal.
Explicitly, we consider the change of the basis of the space of solutions given by the transition matrix  
$$
T= \left[ \begin {smallmatrix} 
 {1 \over 2}&0&0&0&{1 \over 2}&0&0&0
\\ 0&{1 \over 2}&0&0&0&{1 \over 2}&0&0\\ -{\frac {
9}{11}}\sqrt {2}&0&0&0&{\frac {9}{11}}\sqrt {2}&0&0&0
\\ 0&0&0&{1 \over 2}&0&0&0&{1 \over 2}\\ 0&0&{1 \over 2}&0
&0&0&{1 \over 2}&0\\ 0&{\frac {9}{11}}\sqrt {2}&0&0&0&
-{\frac {9}{11}}\sqrt {2}&0&0\\ 0&0&-{1 \over 16}\sqrt {2}
&0&0&0&{1 \over 16}\sqrt {2}&0\\ 0&0&0&2\sqrt {2}&0&0&0&
-2\sqrt {2}
\end {smallmatrix} \right] 
.$$
If we apply this transition, the part ${\sf A}$ acts on solutions by the diagonal matrix 
$A=diag(e^{x_1},e^{x_1},e^{x_1},e^{x_1},e^{-x_1},e^{-x_1},e^{-x_1},e^{-x_1})$ and
the nilpotent part acts by 
$$
N_{CR}=\left[ \begin{smallmatrix} 
1&0&-{\frac {11}{288}}\sqrt {2}x_{{
4}}&{\frac {11}{36}}x_{{5}}&0&0&0&0\\ 0&1&{\frac {
11}{288}}\sqrt {2}x_{{5}}&{\frac {11}{36}}x_{{4}}&0&0&0&0
\\ 0&0&1&0&0&0&0&0\\ 0&0&0&1&0&0&0
&0\\ 0&-x_{{2}}&-{\frac {11}{288}}x_{{2}}\sqrt {2}
x_{{5}}&-{\frac {11}{36}}x_{{2}}x_{{4}}&1&0&{\frac {11}{288}}
\sqrt {2}x_{{4}}&{\frac {11}{36}}x_{{5}}\\ x_{{2}}
&0&-{\frac {11}{288}}x_{{2}}\sqrt {2}x_{{4}}&{\frac {11}{36}}x_{{2
}}x_{{5}}&0&1&-{\frac {11}{288}}\sqrt {2}x_{{5}}&{\frac {11}{36}}x
_{{4}}\\ 0&0&0&-4\sqrt {2}x_{{2}}&0&0&1&0
\\ 0&0&-{1 \over 8}\sqrt {2}x_{{2}}&0&0&0&0&1
\end {smallmatrix} \right].
$$
Altogether, $\exp(-\rho \circ \alpha({\sf c}_{CR}))$ equals to $TN_{CR}AT^{-1}$ and
 all the solutions take in exponential coordinates form 
\begin{gather*}
-{\scriptstyle {576 \over 11}}(w_1 +{\rm i} w_{11})= -{\scriptstyle \frac {288}{11}}({{ e}^{-x_{{1}}}}+{{ e}^{
x_{{1}}}}+{\rm i}x_{{2}}{{ e}^{-x_{{1}}}}
)c_1+ {\scriptstyle \frac {288}{11}}x_{{2}}({{ e}^{-x_{{1}}}}-{\rm i}{
{\rm e}^{x_{{1}}}}-{\rm i}{{ e}^{-x_{{1}}}}
)c_{11}+
\\
8\,\sqrt {2} ( {\rm i}x_{{2}}{{e}^{-x_{{1}}}}-{{ e}^{-x_{{1}}}}+
{{ e}^{x_{{1}}}} ) c_{12}+8( {\rm i}{{ e}^{-x_{{1}}}}-x_{{2}}{{ e}^{-x_{{1}}}}+{\rm i}{{e}
^{x_{{1}}}} )  ( ix_{{5}}-x_{{4}} )c_{14}-
\\
{\rm i}\sqrt {2} ( x_{{2}}{{e}^{-x_{{1}}}}+{\rm i}{{e}^{-x_{{1}}}}-{\rm i}
{{e}^{x_{{1}}}} )  ( {\rm i}x_{{5}}-x_{{4}} )c_{15} +8\sqrt {2} ( {{\rm e}^{-x_{{1}}}}x_{{2}}+{\rm i}{{e}^{-x_{{1}}}}-
{\rm i}{{ e}^{x_{{1}}}} )c_{2}-
\\
8i ( {\rm i}{{e}^{-x_{{1}}}}-{{ e}^{-x_{{1}}}}x_{{2}}+{\rm i}{
{e}^{x_{{1}}}} )  ( {\rm i}x_{{5}}-x_{{4}} ) c_4 -\sqrt {2} ( {{ e}^{-x_{{1}}}}x_{{2}}+{\rm i}{{\rm e}^{-x_{{1}}}}-{\rm i}{
{ e}^{x_{{1}}}} )  ( {\rm i}x_{{5}}-x_{{4}} )
c_5
\end{gather*}
Since $k\mathbb{Z}$ corresponds to the coordinate $x_3$ which does not appear in the solution, the solution exists globally on both $(SL(2,\R)\times (H_{12}\rtimes SO(2)))/ (SO(2)\times SO(2))$ and $(SL(2,\R)\times (H_{12}/k\mathbb{Z}\rtimes SO(2)))/( SO(2)\times SO(2))$.
 
 In the LC case, we compute that $\rho\circ \alpha({\sf c}_{LC})$ acts on solutions $ [w_1, w_{2}, w_{4}, w_5]^t \in \mathcal{S}^{\infty}$ by the matrix 
$$
\left[ \begin {smallmatrix} 
0&{\frac {11}{18}}({ x_2}-{ x_1})&-{\frac {11}{18}}{x_5}&0\\ {
\frac {18}{11}}{ x_1}&0&0&-{1 \over 4}{ x_5}\\ 0&0&0
&{1 \over 4}({ x_2}-{ x_1})\\ 0&0&4{ x_1}&0
\end {smallmatrix} \right] 
$$
Then the part ${\sf K}$ and the nilpotent part act on solutions by the following matrices 
$$R_1=\left[ \begin {smallmatrix} 
\cos ( x_{{1}} ) &{\frac {11}{
18}}\sin ( x_{{1}} ) &0&0\\ -{\frac {18
}{11}}\sin ( x_{{1}} ) &\cos ( x_{{1}} ) &0&0
\\ 0&0&\cos ( x_{{1}} ) &{1 \over 4}\sin
 ( x_{{1}} ) \\ 0&0&-4\sin ( x_{{1
}} ) &\cos ( x_{{1}} )
 \end {smallmatrix} \right],
\ \ \ \ 
N_{LC}=\left[ \begin{smallmatrix} 
1&-{\frac {11}{18}}x_{{2}}&{\frac {11}{
18}}x_{{5}}&-{\frac {11}{72}}x_{{2}}x_{{5}}\\ 0&
1&0&{1 \over 4}x_{{5}}\\ 0&0&1&-{1 \over 4}x_{{2}}
\\ 0&0&0&1
\end {smallmatrix} \right].
$$
Altogether, $\exp(-\rho \circ \alpha({\sf c}_{LC}))$ equals to $N_{LC}R_1$ and all the solutions take in exponential coordinates form 
\begin{gather*}
 -{\scriptstyle {144 \over 11}}w_1=
-{\scriptstyle \frac {144}{11}}\cos ( x_{{1}} ) c_{{1}}+ 8( x_{
{2}}\cos ( x_{{1}} ) -\sin ( x_{{1}} ) 
 ) c_{{2}}-8x_{{5}}\cos ( x_{{1}} ) c_{{4}}+
\\
2x_{{5}}
 ( x_{{2}}\cos ( x_{{1}} ) -\sin
 ( x_{{1}} )  ) c_{{5}}.
\end{gather*}

Again, $k\mathbb{Z}$ corresponds to the coordinate $x_3$ and thus does not pose obstruction for global existence of the solutions. On the other hand, $-\id$ corresponds to $x_1=\pi$, which provides different value for the solutions than $x_1=2\pi$ and thus the solutions exist globally only on the homogeneous spaces $(SL(2,\R)\times (H_{12}\rtimes  \R^+))/H$ and $(SL(2,\R)\times (H_{12}/k\mathbb{Z}\rtimes \R^+)),H$), i.e., $\mathcal{S}^{\infty}_{PGL(2,\R)\times (H_{12}/k\mathbb{Z}\rtimes \R^+)}=\{0\}.$

\subsection{Solutions in the theory of systems of ODEs.} \label{exp-path}
We present here solutions for the example in Sections \ref{path-alp} and \ref{path-S}. Here, the choice of the complement $\fc$ will depend on the choice of the groups $K$ and $H$.

Firstly, we consider the unipotent Lie groups $K=\exp(\fk)$ and $H=\exp(\fh)=H_{12}$, where ${\sf c}_1=[x_1,\dots, x_5,0]$ gives the complement. We also consider the quotient $K/k\mathbb{Z}$ of $K$ by the normal subgroup generated by $\exp(kx_2)$ for $x_2\in \mathbb{Z}$. There is no obstruction for the existence of the extension $(\alpha,i)$ of $(K,H)$ and $(K/k\mathbb{Z},H)$ to $(PGl(4,\R),P_{1,2})$, respectively.

Next, we consider $K_f=GL(2,\R)\exp(\fk)$, where the product is not a direct, because $\frak{gl}(2,\R)\cap \fk=\langle e_3 \rangle$ and $\exp(x_3e_3)$  is an open subset (missing one point) of the sphere $GL(2,\R)/(P_1\times \R)$, and $H_f=P_1\times \R\times \exp(\fh)$. So we need to choose a different complement ${\sf c}_2$ to cover this part globally, which is not contained in $\fk$, namely $\frak{so}(2)\subset \fk_f$. To compute with this new complement, we do not need to recompute the results in Sections \ref{path-alp} and \ref{path-S} for $\fk_f$. It is sufficient to add $$-x_3\rho(\left[ \begin{smallmatrix} 
0&1&0&0\\ 0&
0&0&0\\ 0&0&0&0
\\ 0&0&0&0
\end {smallmatrix} \right])$$ to the formula for $\Phi$. Then $x_3$ exponentiates into $\textsf{K}$ part and the rest is contained in $\exp(\fn)$ part. It is clear that there is an extension $(\alpha_f,i_f)$ of $(K_f,H_f)$ to $(PGL(4,\R),P_{1,2})$, but as we mentioned above, we do not need it explicitly.

\noindent
{\bf (1) Standard and dual representations.}
For the standard representation, the solution $w_4 \in \mathcal{S}^{\infty}$ sits in trivial representation. 
For the dual representation, solutions take form $ [w_1,w_2,w_3]^t \in \mathcal{S}^{\infty}$
and we compute 
$$\rho\circ \alpha({\sf c}_1)=
\left[ \begin{smallmatrix} 0&-x_3&-x_1\\ 
0&0&-x_4\\ 
0&0&0\\ 
 \end{smallmatrix} \right], \ \ \ \rho\circ \alpha({\sf c}_2)=
\left[ \begin{smallmatrix} 0&-x_3&-x_1\\ 
x_3&0&-x_4\\ 
0&0&0\\ 
 \end{smallmatrix} \right].$$
Thus $\exp(-\rho\circ \alpha({\sf c}_1))$ equals to
$${\footnotesize \left[ \begin {array}{ccc} 
1 & x_3 & {1 \over 2}x_3x_4+x_1 \\
0&1&x_4 \\ 0&0&1
\end {array} \right] 
}$$ 
and all the solution  take in exponential coordinates of ${\sf c}_1$ form
 $$ { w_1} (x_i ) 
={ c_1}+{ x_3}{ c_2}+ ( {\scriptstyle {1 \over 2}}{ x_3}{ x_4}+{ x_1}) { c_3}.  
$$ 
Simultaneously, we compute that
$\exp(-\rho\circ \alpha({\sf c}_2))$ equals to $NR_1$ for
$$N= \left[ \begin {smallmatrix}1 &0 &x_1\\
0 &1&x_4\\ 0&0&1\end {smallmatrix} \right],R_1=\left[ \begin {smallmatrix} \cos ( x_{{3}} ) &\sin ( x
_{{3}}) &0\\-
\sin ( x_{{3}} ) &\cos ( x_{{3}} ) &0\\ 0&0&1\end {smallmatrix} \right]
$$
and all the solution take in exponential coordinates of ${\sf c}_2$ form
\begin{gather*}
 { w_1} ( x_i ) =
\cos ( x_{{3}} ) c_{{1}}+\sin ( x_{{3}} ) c_{{2}
}+x_1 c_{{3}}.
\end{gather*}
These are globally defined for all of the cases we consider.


\noindent
{\bf (2) Second powers.}
For the second symmetric power of the standard representation, the solutions $w_{10} \in \mathcal{S}^{\infty}$ sits in trivial representation. For the second skew--symmetric power of the standard representation, solutions take form $ [w_4,w_5,w_6]^t \in \mathcal{S}^{\infty}$ and we compute
$$\rho\circ \alpha({\sf c}_1)= \left[ \begin{smallmatrix} 0&0&0\\ 
x_3&0&0\\ 
x_1&x_4&0\\ 
 \end{smallmatrix} \right], \ \ \rho\circ \alpha({\sf c}_2)= \left[ \begin{smallmatrix} 0&-x_3&0\\ 
x_3&0&0\\ 
x_1&x_4&0\\ 
 \end{smallmatrix} \right].$$
Thus $\exp(-\rho\circ \alpha({\sf c}_1))$ equals to 
$$ \left[ \begin {smallmatrix} 
1 &0 & 0 \\
-x_3&1&0 \\ -x_1-x_4+{1 \over 2}x_3x_4&-x_4&1
\end {smallmatrix} \right] 
$$ 
and we compute that solutions take in exponential coordinates for ${\sf c}_1$ form
$$ 
{ w_6} ( x_i) = ( -x_{{1}}+{\scriptstyle {1 \over 2}}x_{{3}}x_{{4}}
 ) { c_1}-x_{{4}}{ c_2}+{ c_3}
.$$
Simultaneously, $\exp(-\rho\circ \alpha({\sf c}_2))$ equals to $NR_1$ for
$$N= \left[ \begin {smallmatrix}1 &0 &0\\
0 &1&0\\ -x_1&-x_4&1\end {smallmatrix} \right],R_1=\left[ \begin {smallmatrix} \cos ( x_{{3}} ) &\sin ( x
_{{3}}) &0\\-
\sin ( x_{{3}} ) &\cos ( x_{{3}} ) &0\\ 0&0&1\end {smallmatrix} \right]
$$
and we compute that solutions take in exponential coordinates for ${\sf c}_2$ form
\begin{gather*}
w_{{6}}( x_i ) = 
(x_4\sin(x_3)-x_1\cos(x_3) )c_{{1}}- 
(x_1\sin(x_3)+x_4\cos(x_3)) c_{{2}}+c_{{3}} .
\end{gather*}
These are globally defined for all of the cases we consider.

Let us now swap to duals. For the second symmetric power of the dual representation, solutions take form $ [w_1,w_2,w_3,w_4,w_5,w_6,w_7,w_8]^t \in \mathcal{S}^{\infty}$ and we compute that $\rho\circ \alpha({\sf c}_1)$ and $\rho\circ \alpha({\sf c}_2)$ act as follows 
$$\left[ \begin{smallmatrix} 
0&-2x_{{3}}&0&-2x_{{1}}&0&0&-2x
_{{2}}&0\\ 0&0&-x_{{3}}&-x_{{4}}&-x_{{1}}&0&-x_{{5}}
&-x_{{2}}\\ 0&0&0&0&-2x_{{4}}&0&0&-2x_{{5}}
\\ 0&0&0&0&-x_{{3}}&-x_{{1}}&0&0
\\ 0&0&0&0&0&-x_{{4}}&0&0\\ 0&0&0&0
&0&0&x_{{4}}&-x_{{1}}\\ 0&0&0&0&0&0&0&-x_{{3}}
\\ 0&0&0&0&0&0&0&0 
 \end{smallmatrix} \right],
\left[ \begin {smallmatrix} 0&-2x_{{3}}&0&-2x_{{1}}&0&0&-2x
_{{2}}&0\\ x_3&0&-x_{{3}}&-x_{{4}}&-x_{{1}}&0&-x_{{5}}
&-x_{{2}}\\ 0&2x_3&0&0&-2x_{{4}}&0&0&-2x_{{5}}
\\ 0&0&0&0&-x_{{3}}&-x_{{1}}&0&0
\\ 0&0&0&x_3&0&-x_{{4}}&0&0\\ 0&0&0&0
&0&0&x_{{4}}&-x_{{1}}\\ 0&0&0&0&0&0&0&-x_{{3}}
\\ 0&0&0&0&0&0&x_3&0\end {smallmatrix} \right]
.$$
Thus $\exp(-\rho\circ \alpha({\sf c}_1))$ equals to 
$$ 
\left[ \begin {smallmatrix} 1&*&*&*&*&
*&
*
&*
\\ 0&1&{ x_3}&{ x_4}&\frac32{ x_3}{ x_4}+{
 x_1}&\frac12{ x_3} x_4^{2}+{ x_4}{ x_1}&{ x_5}-\frac18{ x_3
} x_4^{3}-\frac13 x_4^{2}{ x_1}&{ x_2
}-\frac{1}{40} x_3
^{2} x_4^{3}+\frac13{{ x_1}}{ x_4}({ x_1}+ {\frac {1}{8}} {{ x_4}}{ x_3})+{\frac {3}{2}}{ x_5}{ x_3}\\ 
0&0&1&0&2{ x_4}& x_4^{2}&-{1 \over 3} 
x_4^{3}&-{1 \over 12}{ x_3} x_4^{3}+\frac13 x_4^{2}{ x_1}+2
{ x_5}\\ 0&0&0&1&{ x_3}&\frac12{ x_3}{ x_4
}+{ x_1}&-\frac16{ x_4} \left( { x_3}{ x_4}+3{ x_1}
 \right) &-{1 \over 24} x_3^{2}{{ x_4}}^{2}+\frac12x_1^{2}
\\ 0&0&0&0&1&{ x_4}&-\frac12 x_4^{2}&-\frac16{
 x_3} x_4^{2}+\frac12{ x_4}{ x_1}\\ 0&0
&0&0&0&1&-{ x_4}&-\frac12{ x_3}{ x_4}+{ x_1}
\\ 0&0&0&0&0&0&1&{ x_3}\\ 0&0&0&0
&0&0&0&1\end {smallmatrix} \right],
$$ 
where the first line is too long to write it down and can be read of the solution in exponential coordinates for ${\sf c}_1$ that takes form
\begin{gather*}
 {w_1} ( x_i ) =c_{{1
}}+2x_{{3}}c_{{2}}+x_{{3}}^{2}c_{{3}}+ ( x_{{3}}x_{{4}}+2x_
{{1}} ) c_{{4}}+ x_{{3}}( x_{{3}}x_{{4}}+2x_{{1}}
 ) c_{{5}}+
\\
{\scriptstyle  {1 \over 4} }( { x_3}{ x_4}+2{ x_1}) ^{2} c_{{6}}+
 ( -{\scriptstyle {1 \over 20}}x_{{3}}^{2}x_
{{4}}^{3}-{\scriptstyle {1 \over 4}}x_{{3}}x_{{4}}^{2}x_{{1}}+x_{{3}}x_{{5}}-{\scriptstyle {1 \over 3}}x_{{1
}}^{2}x_{{4}}+2x_{{2}} ) c_{{7}}+
\\
 ( -{\scriptstyle \frac {1}{120}}
x_{{3}}^{3}x_{{4}}^{3}+x_{{3}}^{2}x_{{5}}+{\scriptstyle {1 \over 6}}x_{{3}}x_{{1}}^{
2}x_{{4}}+2x_{{3}}x_{{2}}+{\scriptstyle {1 \over 3}}x_{{1}}^{3} ) c_{{8}}
.
\end{gather*}
All of these solutions exist in the case of extension of $(K,H)$, but only those not containing $x_2$ exist globally in the case of extension of $(K/k\mathbb{Z},H)$, i.e, $\mathcal{S}^{\infty}_{K/k\mathbb{Z}}=[w_1,w_2,w_3,w_4,w_5,w_6,0,0]\subset \mathcal{S}^{\infty}.$

For ${\sf c}_2$ we decompose the action $\exp(-\rho\circ \alpha)$ into the nilpotent part
$$ 
N=\left[ \begin {smallmatrix} 
1&0&0&2x_{{1}}&0&{x_{{1}}}^{2}&-\frac13
{x_{{1}}}^{2}x_{{4}}+2x_{{2}}&\frac13{x_{{1}}}^{3}
\\ 0&1&0&x_{{4}}&x_{{1}}&x_{{4}}x_{{1}}&-\frac13{x_{{4
}}}^{2}x_{{1}}+x_{{5}}&\frac13{x_{{1}}}^{2}x_{{4}}+x_{{2}}
\\ 0&0&1&0&2x_{{4}}&{x_{{4}}}^{2}&-\frac13{x_{{4}}}^
{3}&\frac13{x_{{4}}}^{2}x_{{1}}+2x_{{5}}\\ 0&0&0&1&0
&x_{{1}}&-\frac12x_{{4}}x_{{1}}&\frac12{x_{{1}}}^{2}\\ 0
&0&0&0&1&x_{{4}}&-\frac12{x_{{4}}}^{2}&\frac12x_{{4}}x_{{1}}
\\ 0&0&0&0&0&1&-x_{{4}}&x_{{1}}\\ 0
&0&0&0&0&0&1&0\\ 0&0&0&0&0&0&0&1
\end {smallmatrix} \right]
$$
and the compact part
$$ 
R_1=\left[ \begin {smallmatrix} 
 \cos ^{2} ( x_{{3}} ) 
 &2\sin ( x_{{3}} ) \cos ( x_{{3}}
 ) &-  \cos ^{2} ( x_{{3}} ) +1&0&0&0&0
&0\\ -\sin ( x_{{3}} ) \cos ( x_{{3}
} ) &2  \cos^{2} ( x_{{3}} )   -1&\sin
 ( x_{{3}} ) \cos ( x_{{3}} ) &0&0&0&0&0
\\ -  \cos^{2
} ( x_{{3}} )   +1&-2\sin ( x_{{3}} ) \cos ( x_{{3}} ) &
  \cos^{2} ( x_{{3}} )  &0&0&0&0&0
\\ 0&0&0&\cos ( x_{{3}} ) &\sin ( x_
{{3}} ) &0&0&0\\ 0&0&0&-\sin ( x_{{3}}
 ) &\cos ( x_{{3}} ) &0&0&0\\ 0&0&0
&0&0&1&0&0\\ 0&0&0&0&0&0&\cos ( x_{{3}}
 ) &\sin ( x_{{3}} ) \\ 0&0&0&0&0&0
&-\sin ( x_{{3}} ) &\cos ( x_{{3}} )
\end {smallmatrix} \right].
$$
Thus $\exp(-\rho \circ \alpha({\sf c}_2))$ is given by $NR_1$ and solutions take in exponential coordinates for ${\sf c}_2$ form
\begin{gather*}
 w_1(x_i) =  \cos^{2} ( x_{{3}} ) 
  c_1+2\sin ( x_{{3}} ) \cos ( x_{{3}}
 ) c_2+(1-  \cos^{2} ( x_{{3}} ) )c_3+2x_{{
1}}\cos ( x_{{3}} ) c_4+\\
2x_{{1}}\sin ( x_{{3}} ) c_5+
x_{{1}}^{2}c_6+{\scriptstyle \frac13 }(( 6x_{{2}} -x_{{1}}^{2}x_{{4}}) \cos ( x_{{
3}} ) -\sin ( x_{{3}} ) x_{{1}}^{3})c_7+
\\
{\scriptstyle \frac13}( ( 6x_{{2}}-x_{{1}}^{2}x_{{4}} ) \sin ( x_{{
3}} ) +\cos ( x_{{3}} ) x_{{1}}^{3}
)c_8,
\end{gather*}
which exists globally.

For the second skew--symmetric power of the dual representation, solutions take form $ [w_1,w_2,w_3]^t \in \mathcal{S}^{\infty}$ and we compute 
$$\rho\circ \alpha({\sf c}_1)= \left[ \begin{smallmatrix} 0&-x_4&x_1\\ 
0&0&-x_3\\ 
0&0&0\\ 
 \end{smallmatrix} \right], \ \ \rho\circ \alpha({\sf c}_2)= \left[ \begin{smallmatrix} 0&-x_4&x_1\\ 
0&0&-x_3\\ 
0&x_3&0\\ 
 \end{smallmatrix} \right].$$
Thus $\exp(-\rho\circ \alpha({\sf c}_1))$ equals to 
$$ \left[ \begin {smallmatrix} 
1 &x_4 & {1 \over 2}x_3x_4-x_1 \\
0&1&x_3 \\ 0&0&0
\end {smallmatrix} \right] 
$$ 
and we compute that solutions take in exponential coordinates for ${\sf c}_1$ form
$$ w_1(x_i) = c_1+x_4c_2+({\scriptstyle {1 \over 2}}x_3x_4-x_1)c_3
.$$
Simultaneously, $\exp(-\rho\circ \alpha({\sf c}_2))$ equals to $NR_1$ for
$$N= \left[ \begin {smallmatrix}1 &x_4 &-x_1\\
0 &1&0\\ 0&0&1\end {smallmatrix} \right],R_1=\left[ \begin {smallmatrix} 1&  0& 0\\ 
0& \cos ( x_{{3}} ) &\sin ( x
_{{3}}) \\ 0& -
\sin ( x_{{3}} ) &\cos ( x_{{3}} ) \end {smallmatrix} \right]
$$
and we compute that solutions take in exponential coordinates for ${\sf c}_2$ form
\begin{gather*}
w_{{1}}( x_i ) = 
{ c_1}+ (x_4\cos(x_3)+x_1\sin(x_3)) { c_2}+(x_4\sin(x_3)-x_1\cos(x_3)) { c_3}.
\end{gather*}
These solutions again exist globally.

\end{document}